\newtheorem{theorem}{Theorem}[section]
\newtheorem{proposition}[theorem]{Proposition}
\newtheorem{lemma}[theorem]{Lemma}
\newtheorem{corollary}[theorem]{Corollary}
\newtheorem*{theorem-densemain}{Theorem \ref{densemain}}
\theoremstyle{definition}
\newtheorem{definition}[theorem]{Definition}
\newtheorem{observation}[theorem]{Observation}
\newtheorem{example}[theorem]{Example}
\theoremstyle{definition}
\theoremstyle{remark}
\numberwithin{equation}{section}
\newtheoremstyle{bracketed}%
	{}% space above
	{}% space below
	{}% body font
	{}% indent amount
	{\bfseries}% thm head font
	{}% punc after thm head
	{ }% space after thm head
	{(\thmname{#1}\thmnumber{#2})}% thm head spec
\theoremstyle{bracketed}
\newtheorem{argstep}[theorem]{}
\noindent\textsc{#1.\/}}%
\definecolor{dkgreen}{rgb}{0,0.7,0.1}
\long\def\ignore#1{}
\def\cA{\mathcal{A}}
\def\cC{\mathcal{C}}
\def\cT{\mathcal{T}}
\def\cW{\mathcal{W}}
\def\cZ{\mathcal{Z}}
\def\mZ{\mathbb{Z}}
\let\al\alpha
\let\be\beta
\let\ga\gamma
\let\de\delta
\let\ka\kappa
\let\la\lambda
\def\set#1{\{#1\}}% set of elements
\def\hf{{\textstyle\frac12}}% 1/2, used frequently in dense graphs paper
\def\iv{^{-1}}% inverse
\def\mmod{{\fam0\ mod\ }}% mod with normal text spacing
\def\deg{{\fam0 deg}}% degree
\def\mw{\!\cdot\!}% merge walks; \cdot has too much space
\def\ug#1{#1_{\fam0 u}}% underlying undirected graph
\def\itr{{\fam0 Tr}}% inner transition graph - now just transition graph
\let\mate\omega% half-edge mate
\let\incv\psi% vertex to which half-edge/arc incident
\def\sbmax{_{\fam0 max}}% subscript max (roman), used for nonoriemtable results
\let\DP\Pi% directed chain of digons with loops at ends
\let\dip\Delta% dipole digraph
\let\ddc\Gamma% cycle of directed digons
\def\bC{\overline{C}}
\def\bl{\overline{\ell}}
\def\bg{\overline{g}}
\def\bh{\overline{h}}
\def\bv{\overline{v}}
\def\ot{
   \overset{\rightharpoonup}%
	{\phantom{\vrule depth0pt height6pt width7pt}}\hskip-8.5pt{T}}
\def\prof(#1,#2){\{{#1},{#2}\}}
\def\jprof(#1;#2){(\prof(#1),\prof(#2))}
\def\siran#1{\v{S}ir\'{a}\v{n}}% e.g. use as "\siran."
\def\skoviera#1{\v{S}koviera}
\def\fijavz#1{Fijav\v{z}}
\def\nebesky#1{Nebesk\'{y}}
\def\yavorskii#1{Javors'ki\u{\i}}% Use name starting with J to be consistent w MathSciNet, also fixed bib
\def\realline{\hbox to \hsize}% plain TeX \line
\journal{To be decided}
\def\ps@pprintTitleNoSubTo{%
     \let\@oddhead\@empty
     \let\@evenhead\@empty
     \def\@oddfoot{\footnotesize\itshape\hfill\today}%
     \let\@evenfoot\@oddfoot}
\let\ps@pprintTitle\ps@pprintTitleNoSubTo% uncomment for no "Sub to ..."
\begin{document}

\begin{frontmatter}

\title{Bi-eulerian embeddings of graphs and digraphs}

\author[label1, label3]{M. N. Ellingham\corref{cor1}}

\address[label1]{Department of Mathematics, 1326 Stevenson Center,
Vanderbilt University,
Nashville, Tennessee 37240}
\fntext[label3]{Supported by Simons Foundation awards 429625 and MP-TSM-00002760.}

\ead{mark.ellingham@vanderbilt.edu} 
%\ead[url]{https://math.vanderbilt.edu/ellingmn/}

\author[label2]{Joanna A. Ellis-Monaghan}

\address[label2]{Korteweg-de Vries Institute for Mathematics, University of Amsterdam, Science Park 105-107, 1098 XH Amsterdam, the Netherlands}
\ead{jellismonaghan@gmail.com}
%\ead[url]{https://sites.google.com/site/joellismonaghan/}

\begin{abstract}
In 1965 Edmonds showed that every eulerian graph has a \emph{bi-eulerian} embedding, i.e., an embedding with exactly two faces, each bounded by an euler circuit.
We refine this result by giving conditions for a graph to have a bi-eulerian embedding that is specifically orientable or nonorientable.
We give connections to the maximum genus problem for \emph{directed embeddings} of digraphs, in which every face is bounded by a directed circuit.
Given an eulerian digraph $D$ with all vertices of degree $2$ mod $4$ and a directed euler circuit $T$ of $D$, we show that $D$ has an orientable bi-eulerian directed embedding with one of the faces bounded by $T$; this is a maximum genus directed embedding.
This result also holds when $D$ has exactly two vertices of degree $0$ mod $4$, provided they are interlaced by $T$.
More generally, if $D$ has $\ell$ vertices of degree $0$ mod $4$, we can find an orientable directed embedding with a face bounded by $T$ and with at most $\ell+1$ other faces.
We show that given an eulerian graph $G$ and a circuit decomposition $\cC$ of $G$, there is an nonorientable embedding of $G$ with the elements of $\cC$ bounding faces and with one additional face bounded by an euler circuit, unless every block of $G$ is a cycle and $\cC$ is the collection of cycles of $G$.  In particular, every eulerian graph that is not edgeless or a cycle has a nonorientable bi-eulerian embedding with a given euler circuit $T$ bounding one of the faces.
Polynomial-time algorithms giving the specified embeddings are implicit in our proofs.
\end{abstract}

\begin{keyword} 
Eulerian \sep bi-eulerian  \sep relative embedding \sep orientable graph embedding
\sep maximum genus \sep circuit decomposition \sep directed embedding 
\MSC 05C10 \sep 05C45 \sep 05C20

\end{keyword}

\end{frontmatter}

%%
%% Start line numbering here if you want
%%
% \linenumbers

\section{Introduction}\label{sec:intro}

A large segment of topological graph theory is concerned with the existence or optimization of graph embeddings subject to restrictions.   Often the goal is a cellular embedding of a graph in a surface of either minimum or maximum genus (having a maximum or minimum number of faces, respectively).
Other frequent objectives include finding embeddings with restricted faces, for example, where all faces are bounded by $4$-cycles or by hamilton cycles. 
Orientability is often a significant consideration.

Here we focus on two related types of embedding: graph embeddings with faces bounded by euler circuits, and maximum genus directed embeddings of digraphs.  A \emph{directed embedding} of a digraph is an embedding where all faces are bounded by directed walks.  A connected digraph with a directed embedding must be eulerian, so our results always involve eulerian graphs or digraphs.

In the orientable setting, the two types of embedding are closely related, and \emph{bi-eulerian} embeddings, that is, embeddings with exactly two faces each bounded by an euler circuit, are particularly important.  A bi-eulerian directed embedding of a digraph is necessarily orientable and of maximum genus, and an orientable bi-eulerian embedding of an undirected graph can have its edges oriented to give a directed embedding. See Figure \ref{BEembedcirc} for an example of a bi-eulerian direted embedding, represented as a ribbon graph.

Our main result in the orientable setting is that, given an euler circuit $T$ in an eulerian digraph $D$ (or an eulerian graph $G$) having all vertices of degree $2$ mod $4$, there is always an orientable bi-eulerian embedding of $D$ (or $G$) with one of the faces bounded by $T$. If there are $\ell$ vertices of degree $0$ mod $4$, there is still a maximum genus embedding with one face bounded by $T$, and no more than $\ell + 1$ additional faces.  We further show that an orientable bi-eulerian embedding exists when there are two vertices of degree $0$ mod $4$, provided that these two vertices are interlaced by $T$, but not necessarily otherwise.  We then extend this to some sufficient conditions under which there is an orientable bi-eulerian embedding when there are many vertices of degree $0$ mod $4$.
Polynomial-time algorithms giving the specified embeddings are implicit in our proofs. 
The $2$ mod $4$ degree condition plays a central role in the theory, as there is an infinite family of configurations with vertices of degree $4$ whose presence in a graph prevents it from having an orientable bi-eulerian embedding.

As is often the case, the nonorientable setting is easier than the orientable setting.  Leveraging preexisting theory, we show that all nontrivial eulerian graphs except cycles have bi-eulerian embeddings in nonorientable surfaces.  In most cases a given euler circuit, or even a given circuit decomposition, can be completed to a nonorientable embedding with the addition of one additional face bounded by an euler circuit.  In the nonorientable setting, directed embeddings cannot have euler circuit faces, so we discuss embeddings of undirected graphs with euler circuit faces in Section \ref{sec:no-eulf}, separately from maximum nonorientable genus directed embeddings in Section \ref{sec:no-diremb}.

Our motivation to consider embeddings with euler circuit faces is threefold.  Firstly, the question initially arose from DNA self-assembly applications. Secondly, these embeddings extend a body of prior work beginning with Edmonds \cite{Edm65} in 1965, which overlaps with work on euler circuits compatible with transition systems going back to Kotzig \cite{Kot68} in 1968.   Finally, embeddings with euler circuit faces are related to other combinatorial objects and special types of embeddings.

\begin{figure}[h!]
\centering
\includegraphics[width=0.8
 \textwidth]{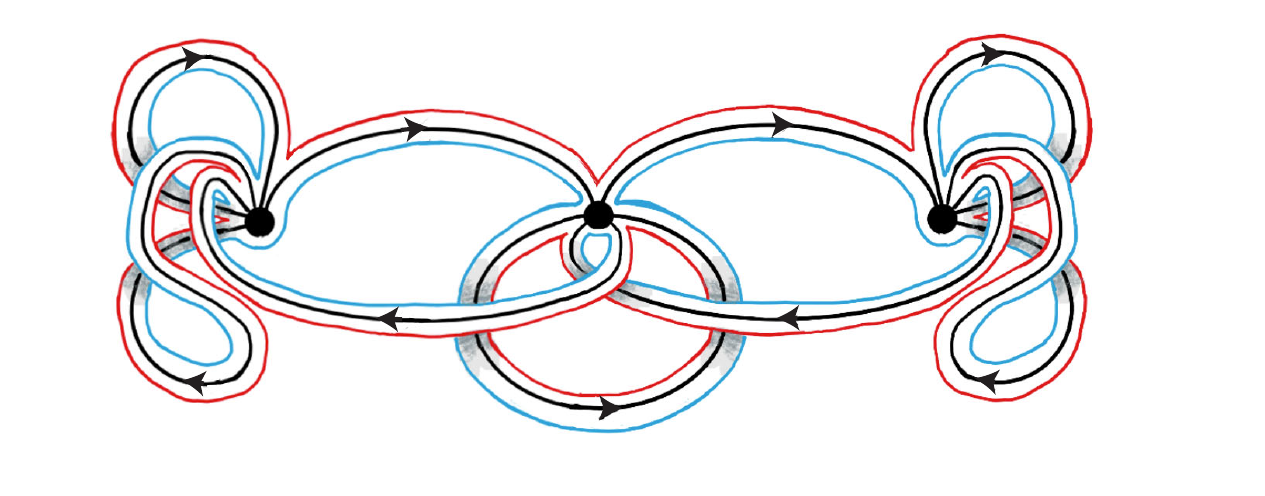}

\caption{A bi-eulerian directed embedding, represented as a ribbon graph.}
\label{BEembedcirc}
\end{figure}  

In \cite{EE-M19} the authors investigated \emph{edge-outer embeddings}, which are orientable cellular embeddings with a special \emph{outer} face whose facial walk uses every edge at least once. Facial walks in edge-outer embeddings model the `scaffolding strands' used in the assembly of wireframe DNA structures, and also the `reporter strands' used to report solutions in DNA computing (see Jonoska, Seeman and Wu \cite{JSW09}). While we showed that finding an edge-outer embedding where the outer face has minimum length is in general NP-hard, this is easy for eulerian graphs. It follows from results on the existence of relative embeddings \cite{SS87} that every eulerian graph has an optimal edge-outer embedding, where the outer face is bounded by an euler circuit.  Furthermore, that euler circuit can be specified in advance.  However, this result gives no control over the number or sizes of the remaining faces.  This leads naturally to the question of additionally minimizing the total number of faces, i.e., finding a maximum genus embedding subject to the initially specified euler circuit.  A bi-eulerian embedding would provide an optimal solution.

The second motivation is that the problem of constructing embeddings with euler circuit faces in various settings forms a long-standing challenge in the field of topological graph theory, dating back to the following 1965 existence result of Edmonds. 

\begin{theorem}[Edmonds {\cite[p.~123]{Edm65}}]
\label{edm-bieul}
Every eulerian graph has a bi-eulerian embedding.
\end{theorem}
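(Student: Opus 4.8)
The plan is to work with rotation systems carrying edge signs, so that nonorientable surfaces are allowed; this extra freedom turns out to be essential. The first step is a reduction. I claim that a cellular embedding of an eulerian graph $G$ is bi-eulerian as soon as it has exactly two faces, at least one of which is bounded by an euler circuit. Indeed, view the embedding through its darts (directed edges): the two face boundaries partition the $2|E(G)|$ darts into two orbits, and an euler-circuit face uses exactly one dart of each edge. The remaining orbit then consists of precisely the reverse darts, one per edge, and so automatically traces an euler circuit as well. Hence it suffices to fix one euler circuit $T$ of $G$, force $T$ to bound a face, and then arrange that there is exactly one other face.

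Second, I would show that $T$ can always be made a face and identify the remaining freedom. Writing $T$ as a cyclic sequence of darts $t_1 t_2 \cdots t_m$, the condition that $T$ be a face of the embedding given by a rotation $\rho$ and dart-reversal $\theta$ is that $\rho\theta(t_i) = t_{i+1}$ for all $i$. At a vertex $v$ of degree $2d$ this pins down $\rho_v$ on the $d$ darts leaving $v$ that $T$ does not use (sending each to the next dart that $T$ does use), but leaves free how $\rho_v$ permutes the $d$ darts that $T$ does use. Completing $\rho_v$ to a single $2d$-cycle amounts to choosing, at each vertex, a cyclic permutation $\sigma_v$ of those $d$ darts; any such choice yields an embedding in which $T$ bounds a face, and the other face or faces are traced by the complementary darts.

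Third, and this is the crux, I would argue that the local choices $\{\sigma_v\}$ together with edge signs can be made so that the complementary darts form a \emph{single} orbit, giving exactly two faces in total. This is an upper-embeddability style argument: starting from an arbitrary completion, if two or more complementary faces remain, one seeks a local switch at some vertex (re-splicing two transitions of $\sigma_v$, possibly flipping a sign) that merges two of these faces while leaving $T$ untouched, and one iterates until a single face remains. The main obstacle is proving that such a merging move is always available; here the freedom to use nonorientable surfaces is exactly what removes the parity obstruction. An orientable two-face embedding requires $|E(G)| - |V(G)|$ to be even, which can fail (for instance the bowtie, two triangles sharing a vertex, has no orientable bi-eulerian embedding), so a purely orientable argument cannot succeed in general. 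I would expect this step to reduce to a connectivity statement about an auxiliary graph on the complementary faces, in which a crosscap-type switch always joins two previously separate boundary components.

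As a sanity check, note that a cycle embeds in the sphere with two faces, each bounded by the whole cycle, which is the required euler circuit; this also supplies the base case for an alternative inductive proof that splits off a pair of edges at a vertex of degree at least $4$ and lifts the embedding, although controlling the face count through the lift is the same essential difficulty.
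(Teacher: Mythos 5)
Your overall architecture --- fix an euler circuit $T$, complete it to an embedding with $T$ as a face, then merge the remaining faces down to one, using nonorientability to evade the parity obstruction --- is exactly the strategy behind the paper's treatment: the paper does not reprove Edmonds' theorem directly, but derives the stronger Corollary \ref{no-bieul} from Theorem \ref{no-circdecomp}, which realizes a given circuit decomposition as inner faces and then invokes \siran. and \skoviera.'s face-reduction theorem (Theorem \ref{ss-no-range}, from \cite{SS88}) to reach a single outer face. The genuine gap is that your third step, which you yourself flag as the crux, is only conjectured: you never prove that a merging move is always available when two or more complementary faces remain, and this is precisely the content of the cited result. The missing argument has two parts. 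First, the complementary faces collectively use each edge of $G$ exactly once (since $T$ uses each edge once and each edge appears exactly twice in the multiset of facial walks), so they form a circuit decomposition of $G$; since $G$ is connected, if there are at least two complementary faces then two \emph{distinct} ones must share a vertex $v$. Without this observation your ``local switch at some vertex'' might simply fail to exist, because nothing in your setup guarantees that two complementary faces ever meet. Second, at such a shared vertex one must check compatibility with the embedding: if $F_1$ uses transition $\set{a,b}$ and $F_2$ uses $\set{c,d}$ at $v$, then of the two re-splicings $\set{a,c},\set{b,d}$ and $\set{a,d},\set{b,c}$, exactly one keeps the transition graph at $v$ a single Hamilton cycle on $E^*(v)$ (the condition of Lemma \ref{standardemb}(a)), and either re-splicing concatenates $F_1$ and $F_2$ (possibly reversing one) into a single closed walk. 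With these two facts your iteration terminates at exactly two faces, and your step-one reduction (which is correct) finishes the proof.

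A secondary problem is formalism: your step two verifies that $T$ can be made a face using the orbit rule $\rho\theta(t_i)=t_{i+1}$, which is the face-tracing rule for \emph{all-positive} signatures only; once edge signs are allowed (which you correctly insist on, e.g.\ via your bowtie example, where $m-n$ is odd so no orientable bi-eulerian embedding exists), face tracing involves accumulated signs and your pinning-down computation no longer literally applies. The clean way to carry out your plan is the facial-walk/transition formalism of Lemma \ref{standardemb}, in which an embedding is just a cyclically compatible collection of closed walks and signs never appear explicitly; this is what the paper, following \siran. and \skoviera., does, and in that language both your step two and the merging step above become routine.
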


Edmonds \cite{Edm65} specifically noted that Theorem \ref{edm-bieul} does not address the orientability of the embedding.  Exposing the significant differences between the orientable and nonorientable settings is one of the goals of the current work.

In the special case of $4$-regular graphs, a 1968 result of Kotzig, reinterpreted as Theorem \ref{kot-bieul} below, shows that any circuit decomposition can be completed to an embedding by adding an euler circuit.  This gives a bi-eulerian embedding when the original decomposition is an euler circuit.  Like Edmonds, Kotzig notes that his result does not address orientability.

\begin{theorem}[Kotzig {\cite[Theorem 3]{Kot68}}]
\label{kot-bieul}
Given a connected $4$-regular graph $G$ and a decomposition $\cC$ of the edge set of $G$ into circuits, there is an euler circuit $T$ such that $\cC \cup \{T\}$ is the collection of facial walks of an embedding of $G$.
\end{theorem}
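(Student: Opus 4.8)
The plan is to recast the circuit decomposition $\cC$ as a \emph{transition system}. At each vertex $v$, which has degree $4$, the circuits of $\cC$ passing through $v$ partition its four half-edges into two pairs; call this pairing $\cC_v$. Any embedding of a $4$-regular graph is determined at each vertex by a cyclic rotation of its four half-edges, and the consecutive pairs of such a rotation form two complementary perfect matchings of the half-edges --- these are exactly the transitions that the facial walks follow through $v$. Conversely, prescribing two \emph{distinct} pairings $\cC_v\neq\cC'_v$ at $v$ singles out the unique rotation having them as its two corner-matchings (with the third, ``crossing,'' pairing excluded). So I would look for a second transition system $\cC'$, with $\cC'_v\neq\cC_v$ at every vertex, whose induced closed trails form a single circuit $T$; since a transition system on a $4$-regular graph decomposes its edges into circuits, a transition system with a single circuit uses every edge exactly once and is therefore an euler circuit. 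Taking the rotation at each $v$ with corner-matchings $\cC_v$ and $\cC'_v$ should then yield an embedding with facial walks exactly $\cC\cup\{T\}$.

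The combinatorial heart is producing such a $\cC'$. I would begin with any transition system satisfying $\cC'_v\neq\cC_v$ at each vertex (two choices per vertex) and then reduce its number of circuits by local \emph{flips}. If $\cC'$ has at least two circuits then, because $G$ is connected, some vertex $v$ lies on two distinct circuits $X\neq Y$, which necessarily occupy the two different pairs of $\cC'_v$. At such a $v$ the only value for $\cC'_v$ other than the current one that still differs from $\cC_v$ is the remaining pairing, and switching to it re-pairs the half-edges across $X$ and $Y$, splicing them into one circuit while leaving $\cC'$ unchanged elsewhere. Each flip lowers the circuit count by one, so after finitely many flips $\cC'$ is a single euler circuit $T$. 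I expect this step to be the main obstacle: one must set up transitions, circuits, and flips carefully enough to make ``a flip at a vertex meeting two distinct circuits merges them'' rigorous, and to argue from connectivity that such a vertex always exists (otherwise distinct circuits would be vertex-disjoint and $G$ disconnected).

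Finally I would realize the embedding by building the surface directly, which sidesteps any face-tracing. Take a closed disk for each circuit of $\cC$ and one disk $\Delta_T$ for $T$, subdivide each boundary according to the edge-sequence of the corresponding circuit, and for every edge $e$ glue together the two boundary arcs labelled $e$ (one on a $\cC$-disk, one on $\Delta_T$). The disk interiors become the faces $\cC\cup\{T\}$, and each edge acquires a band neighbourhood, so it remains only to check that the link of each vertex is a single circle. At $v$ there are exactly four corners, the two $\cC_v$-pairs contributed by the $\cC$-disk(s) and the two $\cC'_v$-pairs contributed by $\Delta_T$; viewing these as edges on the four half-edges, each half-edge meets one $\cC_v$-pair and one $\cC'_v$-pair, so the corners form a $2$-regular graph. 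Since $\cC_v\neq\cC'_v$ share no common pair, this graph is a single $4$-cycle rather than two $2$-cycles, so the corners chain into one circle and $v$ gets a disk neighbourhood. Hence the complex is a closed surface in which $G$ is embedded with faces exactly $\cC\cup\{T\}$. Note that the gluings may be performed with either orientation, so this construction gives no control over orientability, consistent with Kotzig's own remark.
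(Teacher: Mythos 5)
Your proof is correct. Both halves hold up: the flip argument works because on four half-edges there are exactly three perfect matchings, so distinct matchings are automatically disjoint, the requirement $\cC'_v \neq \cC_v$ leaves exactly one alternative pairing at each vertex, and any re-pairing of two \emph{distinct} circuits meeting at a vertex necessarily splices them into one; your connectivity argument for finding such a vertex is also sound, and the disk-gluing with the vertex-link check correctly produces the surface. It is worth noting, though, that the paper does not prove this statement at all --- it quotes it as Kotzig's theorem --- so the right comparison is with the paper's own machinery, namely Theorem \ref{no-circdecomp} in Section \ref{sec:no-eulf}, which generalizes Kotzig's result to arbitrary eulerian graphs (not just $4$-regular ones) and is proved by invoking \siran.--\skoviera.'s relative-embedding theorems (Theorem \ref{realizable}, Proposition \ref{our-ori2nonori}, Theorem \ref{ss-no-range}). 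Your route is more elementary and self-contained, is implicitly algorithmic (each flip is a local operation, and at most $|V(G)|$ flips are needed), and exploits $4$-regularity in an essential way --- at a vertex of degree $6$ or more, "avoid $\cC_v$" no longer pins down the flip, which is precisely why the general eulerian case needs heavier tools. What the paper's approach buys is that generality, plus control of orientability: Theorem \ref{no-circdecomp} shows the completing euler circuit can be chosen to give a \emph{nonorientable} embedding except in the tree-of-cycles case, whereas your construction (as you note, consistently with Kotzig) says nothing about orientability. Two small economies you could take: your disk-gluing paragraph is exactly the "if" direction of the paper's Lemma \ref{standardemb}\ref{standardemb1} (cyclic compatibility of the walk collection), so you could cite that instead of rebuilding the surface by hand; and your preliminary remark that an embedding of a $4$-regular graph "is determined at each vertex by a cyclic rotation" is imprecise for nonorientable embeddings (rotations alone do not capture edge signatures), but this is harmless since your argument never uses it --- you only need the direction from two complementary matchings per vertex to an embedding, which the gluing (or Lemma \ref{standardemb}\ref{standardemb1}) supplies.
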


For $4$-regular graphs, a bi-eulerian embedding is equivalent to a pair of euler circuits that are \emph{compatible} (also called \emph{orthogonal}) in the sense of Fleischner, Sabidussi and others (see \cite{FSW92} and \cite[pp. III.40--41]{Fle90}).  
In 1996 Andersen, Bouchet and Jackson \cite{ABJ96} provided conditions under which a $4$-regular graph embedded in the sphere, projective plane, or torus has a new bi-eulerian embedding with special properties.  They also considered $4$-regular digraphs (digraphs where every vertex has indegree $2$ and outdegree $2$) and proved some necessary conditions for the existence of a bi-eulerian directed (and hence orientable) embedding.  In 2003 Geelen, Iwata, and Murota \cite[Subsection 4.3]{GIM03} answered a question of \cite{ABJ96} by giving a polynomial-time algorithm which (as a special case, applying a result of Bouchet \cite{Bou87r}) determines whether a $4$-regular digraph has a bi-eulerian embedding.  For other related results in the $4$-regular case see \cite{BJ00, GI05, YJ22}.

In 2002 Bonnington, Conder, Morton, and McKenna proved a number of results on orientable directed embeddings of general eulerian digraphs, and the following result for tournaments.

\begin{theorem}[Bonnington et al.~{\cite[Section 3]{BCMM02}}]
\label{tournament} 
Let $T$ be a directed euler circuit in a regular tournament $D$.  Then there is an orientable directed embedding of $D$ with $T$ as one face and at most $2$ other faces.  When $|V(D)| \equiv 3 \mmod 4$, the embedding is bi-eulerian.
\end{theorem}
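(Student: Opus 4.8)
The plan is to translate the statement into the language of transition systems, dispatch the case $n\equiv 3\pmod{4}$ directly from the orientable bi-eulerian result of this paper, and then concentrate all of the effort on $n\equiv 1\pmod{4}$, which is where the tournament structure must be exploited.

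First I would record the parameters and set up the reduction. A regular tournament on $n$ vertices has $n$ odd and every vertex of in- and out-degree $(n-1)/2$, hence undirected degree $n-1$; it is eulerian, so $T$ exists. If $n\equiv 3\pmod{4}$ then every vertex has degree $\equiv 2\pmod{4}$, whereas if $n\equiv 1\pmod{4}$ every vertex has degree $\equiv 0\pmod{4}$. An orientable directed embedding is the same thing as a rotation system in which the incoming and outgoing edges alternate in the rotation at each vertex; given $T$, if at every vertex we place, immediately after each in-edge, the out-edge that $T$ pairs with it, then $T$ becomes a single facial walk (the unique \emph{forward} face), and the only remaining freedom is the cyclic order of the $(n-1)/2$ transition pairs at each vertex. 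These orders determine the remaining \emph{reverse} faces, so the task is to minimise their number $b$, with total face count $F=b+1$. Since $F\equiv E-V\equiv n(n-3)/2\pmod{2}$ by Euler's formula, $b$ is odd when $n\equiv 3\pmod{4}$ and even when $n\equiv 1\pmod{4}$; as $b\ge 1$ always, the theorem is exactly the assertion that we can reach the maximum genus, i.e.\ $b=1$ (bi-eulerian) in the first case and $b=2$ in the second.

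For $n\equiv 3\pmod{4}$ this is immediate: every vertex has degree $\equiv 2\pmod{4}$, so our main theorem supplies an orientable bi-eulerian directed embedding with $T$ bounding one face, leaving exactly one other face.

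The case $n\equiv 1\pmod{4}$ is the crux and the step I expect to be hardest, since now all $n$ vertices have degree $\equiv 0\pmod{4}$ and the general bound only gives $b\le \ell+1=n+1$. My plan is to start from any embedding realising $T$ as the forward face and to reduce $b$ by repeatedly reordering transition pairs: such reorderings preserve orientability and therefore change $b$ only by an even amount, and in suitable local configurations decrease it by two. The real work is to show the reduction can be continued all the way to $b=2$. I would argue that because a tournament is highly connected and every two vertices are adjacent, whenever $b\ge 4$ two reverse faces must meet at a common vertex in a configuration admitting such a decrease, and that the degree-$4$ obstruction configurations alluded to in the introduction cannot occur throughout a tournament. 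A second route would be to pair the degree-$\equiv 0\pmod{4}$ vertices into interlaced pairs and appeal to our results that absorb an interlaced pair without creating a new face, leaving a single unpaired vertex responsible for the unavoidable second reverse face; here the obstacle is to guarantee that such an interlaced pairing exists for an arbitrary euler circuit $T$ in an arbitrary regular tournament. Either way, driving $b$ down to the parity minimum of $2$ completes the proof.
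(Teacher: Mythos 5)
Your statement is one the paper itself does not prove: Theorem \ref{tournament} is quoted from Bonnington et al.\ \cite{BCMM02}, so there is no internal proof to compare against, and your attempt must be judged on its own merits. The setup and the case $|V(D)|\equiv 3 \mmod 4$ are sound: there every vertex has degree $\equiv 2 \mmod 4$, and Theorem \ref{2mod4main} (whose proof via odd vertex identifications is independent of the tournament theorem, so no circularity arises) gives a bi-eulerian directed embedding with $T$ bounding a face. Your parity bookkeeping --- the number $b$ of antifaces is odd when $n\equiv 3 \mmod 4$ and even when $n\equiv 1 \mmod 4$ --- is also correct.

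However, the case $n\equiv 1 \mmod 4$, which you rightly call the crux, is not proved, and both routes you sketch have genuine holes. For the local-moves route, the key claim --- that whenever $b\ge 4$ two antifaces meet at a common vertex ``in a configuration admitting a decrease'' --- is unproven and, as stated, inconsistent with your own parity observation: reordering the cyclic order of transition pairs at a single vertex composes the antiface return map at that vertex with an even permutation, so it changes $b$ by an even amount; in particular \emph{two} distinct antifaces through a common vertex can never be merged this way, no matter how they meet. One needs at least three distinct antifaces through one vertex (merging $3\to 1$), or a coordinated move across several vertices, and you give no argument that such a configuration must exist in a regular tournament. Connectivity alone cannot supply it: the paper's Example \ref{cycledigons} exhibits digraphs $\ddc_\ell$ whose underlying graphs are $4$-edge-connected, with all degrees $0$ mod $4$, in which every orientable directed embedding has at least $\ell$ faces. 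So any correct proof must exploit tournament structure in an essential way that your sketch never pins down --- and this is precisely the content of the argument in \cite{BCMM02}. The second route is structurally blocked: for $n\equiv 1 \mmod 4$ there are $n$ vertices of degree $0$ mod $4$, an odd number, so they admit no partition into interlaced pairs, and the paper's absorption results require an even number of such vertices (Theorem \ref{2nbadok}), with Theorem \ref{2badok} additionally requiring all remaining vertices to have degree $2$ mod $4$; neither iterates so as to absorb pairs one at a time while leaving a single vertex over. The half of the theorem that genuinely needs the tournament hypothesis therefore remains unproven.
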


There is also a series of papers \cite{EGS20,GMS20,GGS05,GPS18}, beginning in 2005, by Griggs and \siran. with various coauthors (Erskine, Grannell, McCourt, and Psomas).  These involve structures from design theory (Steiner triple systems, latin squares, and symmetric configurations of block size $3$) that can be represented as triangular decompositions of graphs, and they investigate whether such a decomposition can be completed to an orientable embedding by adding an euler circuit.  They consider both undirected and directed situations.

The third reason for considering embeddings with euler circuit faces is that they are related to a number of other combinatorial objects, including other special types of graph embedding.
Bi-eulerian embeddings of graphs with $m$ edges are surface duals of embeddings of the dipole $D_m$, which consists of two vertices joined by $m$ parallel edges.  These dipole embeddings may be described by equivalence classes of permutations under cyclic shifts.  Bi-eulerian embeddings of $4$-regular graphs can be viewed as embedded medial graphs of $1$-face embeddings of $1$-vertex graphs (bouquets).  Embedded bouquets arise, for example, as terminal forms of topological extensions of the Tutte polynomial (see for example \cite{topotutte}), and in the study of twisted duals (see for example \cite{AE-M22,E-MM13,YJ22}). 

Orientable directed embeddings correspond to properly $3$-edge-colored cubic bipartite graphs, as shown by Tutte \cite{Tut48, Tut75}. This means that there is a `triality' operation defined by cyclically permuting the edge colors. Under this operation, directed embeddings with an euler circuit face become orientable directed embeddings of a $1$-vertex digraph (directed bouquet).  The correspondence with colored cubic graphs creates links between directed embeddings and a number of other structures.  For example, bi-eulerian directed embeddings of a $1$-vertex digraph correspond to properly $3$-edge-colored cubic bipartite graphs where each pair of colors induces a hamilton cycle; these then correspond to orientable quadrangular (and hence minimum genus) embeddings of the complete bipartite graph $K_{3,m}$ \cite{EW08}.

Connections as described above can be used to transfer results from one type of object to another: for example, the authors survey counting results for dipoles and bouquets (including directed bouquets) in \cite{EE-M22}, which can provide some counting results for bi-eulerian embeddings.

The main focus of the present work is embeddings in the more challenging orientable setting. 
We discuss ways in which vertices of degree $0$ mod $4$ can preclude the existence of bi-eulerian embeddings (Section \ref{sec:nec}).  On the other hand, we can always find a bi-eulerian embedding if all vertices have degree $2$ mod $4$, and some positive results can be obtained even in the presence of vertices of degree $0$ mod $4$ (Section \ref{sec:suff}).  It would be satisfying to prove more generally that a circuit decomposition can be completed to an orientable embedding using an euler circuit face if all degrees are $2$ mod $4$, but unfortunately this is false (Section \ref{sec:circdec}).

To complete the picture, we also address the easier nonorientable setting.  We show (Section \ref{sec:no-eulf}) that Theorem \ref{kot-bieul} extends to all eulerian graphs and that in most cases the embedding can be chosen to be nonorientable.  Although maximum genus nonorientable directed embeddings cannot have euler circuit faces, the same techniques used for nonorientable embeddings of undirected graphs can be applied to directed embeddings, giving straightforward results on nonorientable genus interpolation and maximum nonorientable genus (Section \ref{sec:no-diremb}).

Throughout we use whichever of graphs or digraphs give the stronger results.  Thus, when possible, we state positive results for digraphs (which have positive corollaries for undirected graphs) and negative results for undirected graphs (which have negative corollaries for digraphs).

\section{Terminology} \label{sec:term}

We begin by developing some formalism to support this work, as well as related work including \cite{EE-Mdense}.

\subsection{Representation of graphs and digraphs}

 Since we often need to specify a particular end of an edge or arc, we define graphs and digraphs using sets of vertices and half-edges or half-arcs in a framework similar to that of Fleischner~\cite{Fle90}.
 
A graph is a quadruple $G=(V,E^*,\incv,\mate)$ where $V$ (vertices) and $E^*$ (half-edges) are disjoint sets, $\incv: E^* \to V$ describes the incidence of each half-edge with a vertex, and $\mate: E^* \to E^*$ is a fixed-point-free involution that maps each half-edge to another half-edge.
An edge is an unordered pair $\{h, \mate(h)\}$ where $h \in E^*$, and we let $E$ denote the set of edges.  Note that graphs and digraphs defined in this way may have multiple edges and loops. We say an edge $e = \{h_1, h_2\}$ is \emph{incident} with the pair of (possibly equal) vertices $\incv(h_1)$ and $\incv(h_2)$, and that the vertices $\incv(h_1)$ and $\incv(h_2)$, if distinct, are \emph{adjacent}.
We use $E^*(v)$ to represent $\incv\iv(v)$, the set of half-edges incident with a given vertex $v$.  
To specify a particular graph $G$ we write $V(G)$, $E^*(G)$, $\incv_G$, $\mate_G$, $E(G)$, and $E^*_G(v)$.

Similarly, a digraph is a quintuple $D=(V, A^+, A^-, \incv, \mate)$ where $V$ (vertices), $A^+$ (outgoing half-arcs) and $A^-$ (incoming half-arcs) are disjoint sets, and if $A^* = A^+ \cup A^-$ then
$\incv : A^* \to V$ describes incidences of half-arcs with vertices, and $\mate: A^* \to A^*$ is an involution that maps each element of $A^+$ to an element of $A^-$ and vice versa.
An arc of $D$ is an ordered pair $(g, \mate(g))$ where $g \in A^+$, and we let $A$ denote the set of arcs.
We say an arc $(g,h)$ has \emph{tail} $\incv(g)$ and \emph{head} $\incv(h)$.
We use $A^+(v)$, $A^-(v)$, and $A^*(v)$ to represent the set of elements of $A^+$, $A^-$, and $A^*$, respectively, incident with a vertex $v$. To specify a particular digraph $D$ we write $V(D)$, $A^+(D)$, $A^-(D)$, $\incv_D$, $A^*(D)$, $\mate_D$, $A(D)$, $A^+_D(v)$, $A^-_D(v)$, and $A^*_D(v)$.

Since we frequently move backwards and forwards between digraphs and their underlying graphs, it is helpful to use consistent terminology.  By the `degree' of a vertex in a digraph we mean its total degree, and we refer to digraphs with both indegree and outdegree equal to $r$ at each vertex as `$2r$-regular digraphs,' rather than `$r$-regular digraphs.'  In particular, our $4$-regular digraphs have indegree and outdegree $2$ at each vertex.

It is usually easy to translate between our framework above and standard definitions for graphs and digraphs, and we warn the reader that we will use our framework or standard definitions as convenient.
Assuming such a translation, we follow the terminology of \cite{West} unless otherwise noted.
In particular, a graph, walk, digraph, or directed walk is \emph{nontrivial} if it has at least one edge or arc.

In our framework, a \emph{walk} of length $\ell$ in a graph $G$ is a sequence $W = v_0 g_1 h_1 v_1 g_2 h_2 v_2 \dots\allowbreak v_{\ell-1} g_\ell h_\ell v_\ell$ where
$v_i \in V(G)$, $g_i, h_i \in E^*(G)$,
$\incv(g_i) = v_{i-1}$, $\incv(h_i) = v_i$ and $\mate(g_i) = h_i$ for all $i$ with $1 \le i \le \ell$.
A \emph{subwalk} of $W$ is a consecutive subsequence $v_s g_{s+1} h_{s+1} v_{s+1} \dots v_{t-1} g_t h_t v_t$ of $W$ where $0 \le s \le t \le \ell$.
A \emph{directed walk} in a digraph $D$ is similar to a walk, but with conditions $g_i \in A^+(D)$ and $h_i \in A^-(D)$ for all $i$, with $1 \le i \le \ell$, and we define directed subwalks in the obvious way.
Special types of walks include \emph{closed walks} ($v_0 = v_\ell$), \emph{paths} (no repeated vertices), \emph{cycles} (closed walks that use every edge of a $2$-regular subgraph exactly once), \emph{trails} (no repeated edges), and \emph{circuits} (closed trails). When it does not cause any confusion, we do not distinguish between a closed walk and an equivalence class of closed walks under cyclic shifts, and we indicate such an equivalence class with parentheses, as $(v_0 g_1 h_1 v_1 \dots v_{\ell-1} g_\ell h_\ell)$.

An \emph{eulerian graph} has single circuit that contains all the edges and vertices, or equivalently is a connected graph with all vertices of even degree. An \emph{eulerian digraph} has a single directed circuit that contains all the arcs and vertices, or equivalently is a connected digraph with all vertices having indegree equal to outdegree.

\subsection{Embeddings and directed embeddings}

All graph embeddings in this paper are cellular. We assume that the reader is familiar with cellular embeddings of graphs in surfaces and their various combinatorial representations; standard references are \cite{E-MM13, GT, MT}. In particular, we assume familiarity with the representation of an embedding as a rotation system with positive and negative edge signatures.
We give an alternative representation of embeddings using facial walks after Lemma \ref{standardemb} below.
We also recall that the \emph{Euler genus} $\gamma$ of a surface $\Sigma$ is the genus if $\Sigma$ is nonorientable and twice the genus if $\Sigma$ is orientable, and that Euler's formula for a connected graph $G$ embedded in $\Sigma$ is $|V|-|E|+|F|=2-\gamma$.  An \emph{oriented} embedding is an embedding in an orientable surface with a specific global clockwise orientation.

If we embed a digraph $D$ so that every face is bounded by a directed closed walk of $D$ (respecting the directions of the arcs in $D$), we call this a \emph{directed embedding} of $D$.
The following basic properties of directed embeddings appear in \cite{BCMM02}.  That paper only considers orientable embeddings, but properties \ref{db-alt-s} and \ref{db-eul-s} also hold in the nonorientable case.
Property \ref{db-ori-s} (existence in the orientable case) is used implicitly in \cite{BCMM02} but not stated; we consider existence in the nonorientable case in Section \ref{sec:no-diremb}.

\begin{observation}[{Bonnington, Conder, Morton and McKenna \cite{BCMM02}}]\label{diremb-basic-s}\ 
\begin{enumerate}[(a), nosep]
\item\label{db-alt-s}
An embedding of a digraph is a directed embedding if and only if at each vertex the half-arcs, taken in rotational order, alternate between entering and leaving the vertex.

\item\label{db-eul-s}
Therefore, a connected digraph with a directed embedding must be eulerian, because at each vertex the numbers of entering and leaving half-arcs are equal.

\item\label{db-ori-s}
An eulerian digraph always has at least one orientable directed embedding, namely a rotation system with all positive edge signatures and in which the half-arcs alternate in direction around each vertex.
\end{enumerate}
\end{observation}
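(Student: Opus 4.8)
The plan is to treat all three parts through a single local tool: the way a facial boundary walk passes through the \emph{corners} at a vertex. Fix an embedding of $D$ given by a rotation system with signatures. At a vertex $v$ the rotation is a cyclic ordering of the half-arcs in $A^*(v)$, and the corners at $v$ are the pairs of cyclically consecutive half-arcs. I would first record the standard fact that each facial walk, as it passes through a corner at $v$, arrives along one of the two half-arcs of that corner and departs along the other, and that every corner is used exactly once in this way. The crucial observation, which is signature-independent, is that a half-arc $h$ with $\incv(h)=v$ is traversed from tail to head (consistently with its arc direction) precisely when it is used on arrival and $h\in A^-(v)$, or used on departure and $h\in A^+(v)$. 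Hence a corner $\{h_1,h_2\}$ can be part of a directed facial walk only if one of $h_1,h_2$ lies in $A^+(v)$ and the other in $A^-(v)$.

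For part (a) I would argue both directions from this. If the half-arcs alternate between $A^+(v)$ and $A^-(v)$ around every vertex, then every corner consists of one incoming and one outgoing half-arc, so in tracing any face we leave a vertex along an out-half-arc exactly when we entered along an in-half-arc; propagating this around the closed walk shows the face is \emph{homogeneous}, meaning that in one of its two traversal directions every arc is crossed from tail to head, so its boundary is a directed closed walk. Conversely, if alternation fails at some $v$, there are two consecutive half-arcs of the same type, giving a corner with both half-arcs in $A^+(v)$ (or both in $A^-(v)$); the face through that corner then crosses one incident arc tail-to-head and the adjacent one head-to-tail, and since reversing the whole traversal swaps every such crossing simultaneously, no traversal direction makes all arcs agree, so that face is not bounded by a directed walk. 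This establishes the equivalence in part (a), and because the corner analysis refers only to the local cyclic order at each vertex it applies verbatim in the nonorientable case.

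Part (b) is then immediate: by part (a) a directed embedding forces the half-arcs to alternate in type around each vertex, and an alternating cyclic sequence has equally many terms of each type, so $|A^+(v)|=|A^-(v)|$, i.e.\ indegree equals outdegree at every vertex; together with connectedness this is exactly the definition of an eulerian digraph. For part (c) I would simply build the required rotation system: since $D$ is eulerian we have $|A^+(v)|=|A^-(v)|$ at each $v$, so the incoming and outgoing half-arcs can be interleaved into a single cyclic order that alternates between the two types. Assigning this rotation at every vertex together with all-positive edge signatures yields an orientable embedding, and by part (a) it is a directed embedding, proving existence.

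The main obstacle I anticipate is making the homogeneity step of part (a) fully rigorous, in particular verifying that the ``arrival in $A^-$, departure in $A^+$'' property is genuinely propagated all the way around each facial walk rather than merely locally, and that it behaves correctly as the traced direction interacts with negative signatures. Once the bookkeeping that a half-arc is traversed forwards if and only if it is used as an out-half-arc on departure (equivalently an in-half-arc on arrival) is pinned down independently of signatures, both implications of part (a) fall out cleanly and parts (b) and (c) are short consequences.
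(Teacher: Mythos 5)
Your proof is correct. Note, however, that the paper itself offers no proof of this statement: it is presented as an Observation cited to Bonnington, Conder, Morton and McKenna, with parts (b) and (c) carrying their own one-line justifications inline (the degree count, and the explicit alternating rotation system, respectively) and part (a) asserted without argument; the paper merely remarks that (a) and (b) extend to the nonorientable case and that (c) is implicit in the cited work. Your corner-based argument supplies exactly the missing content: the key bookkeeping fact that a half-arc is traversed consistently with its arc's direction precisely when it is an arrival in $A^-(v)$ or a departure in $A^+(v)$ is signature-independent, the propagation of this consistency around a facial walk (so each face is traversed either all tail-to-head or all head-to-tail, and in the latter case the reverse traversal is the required directed walk) is sound, and the converse via a monochromatic corner is also correct, since the facial walks of any embedding use every corner exactly once --- a fact that in the paper's own framework is the cyclic-compatibility condition of Lemma \ref{standardemb}\ref{standardemb1}, which you could cite to shorten the justification. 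Your treatment of (b) and (c) then coincides with the paper's inline reasoning, so the net effect of your write-up is a self-contained proof, valid in both the orientable and nonorientable settings, of a statement the paper leaves to the literature.
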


We can also characterize orientability of directed embeddings, as follows.

\begin{lemma}\label{diremb-ori-2fc}
A directed embedding is orientable if and only if it is $2$-face-colorable.
\end{lemma}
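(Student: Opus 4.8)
The plan is to reduce orientability to the existence of a proper $2$-coloring of the faces by invoking the standard criterion that a cellular embedding is orientable if and only if its faces (the $2$-cells) admit a \emph{coherent orientation}: each face can be assigned a cyclic orientation of its facial walk so that the two face-sides meeting along any edge traverse that edge in opposite directions. I would first restate this criterion inside the framework of the paper, noting that each edge contributes exactly two steps to the multiset of all facial walks, one for each of its two sides, so that a coherent orientation is exactly an assignment of traversal directions to the faces under which these two steps cancel on every edge.

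The key step is to exploit directedness. In a directed embedding every facial walk is a directed walk, so each face carries a natural orientation in which its boundary is traversed in the direction of the arcs. Consequently, for every arc $a$ the two face-sides incident with $a$ both traverse $a$ in the direction of $a$ itself, so the natural orientations never cancel; this is immediate from the definition of a directed embedding, and is consistent with the alternation of entering and leaving half-arcs in Observation \ref{diremb-basic-s}\ref{db-alt-s}. To obtain a coherent orientation we must therefore reverse the natural orientation on exactly one of the two faces meeting along each arc. Recording for each face $F$ whether we keep ($+$) or reverse ($-$) its natural orientation via a sign $\ep_F \in \{+,-\}$, the cancellation condition on an arc $a$ with distinct incident faces $F, F'$ becomes exactly $\ep_F \ne \ep_{F'}$.

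Thus a coherent orientation exists if and only if the faces admit an assignment of signs in which any two faces sharing an edge receive opposite signs, which is precisely $2$-face-colorability; combined with the orientability criterion this yields the equivalence. The one case needing separate attention is an arc $a$ whose two sides lie in the \emph{same} face $F$: here reversing $F$ relative to itself is impossible, so no coherent orientation exists and the embedding is nonorientable, while simultaneously $F$ shares an edge with itself and so cannot be properly $2$-colored. Both properties fail together, so the equivalence persists.

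I expect the main obstacle to be the careful setup rather than the combinatorics. One must pin down the notion of the two \textbf{sides} of an edge and the coherent-orientation criterion rigorously within the signed-rotation-system (or facial-walk) representation used here, and verify that directedness forces the two sides of every arc to agree in orientation, with correct bookkeeping for loops and for the self-adjacent case above. Once that groundwork is in place, the reduction to proper $2$-coloring of the faces is routine.
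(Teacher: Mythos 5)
Your proof is correct and takes essentially the same approach as the paper's: both rest on the criterion (Lemma \ref{standardemb}\ref{standardemb2}) that orientability amounts to orienting the facial walks so that each edge is used once in each direction, combined with the observation that in a directed embedding the two faces at each arc traverse it in the same direction, so a valid orientation must flip exactly one face of each adjacent pair --- which is precisely a proper $2$-face-coloring. The paper merely phrases the forward direction geometrically (orient the surface and split the facial walks into clockwise and anticlockwise ones), which is the same sign assignment you construct combinatorially.
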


\begin{proof}
Suppose we have a directed embedding.  If it is orientable, orient the surface.  Then the facial directed walks may be divided into clockwise and anticlockwise walks, and since each edge must belong to one of each type, this gives a $2$-face-coloring.  Conversely, if the embedding is $2$-face-colorable, then by reversing the facial directed walks for one color class of faces, we obtain an orientation of the facial walks using every edge once in each direction, so the embedding is orientable.
\end{proof}

The `only if' part of Lemma \ref{diremb-ori-2fc} appears implicitly in \cite[p.~3]{BCMM02}, and a similar argument was used by Rongxia Hao \cite[Lemma 4.1]{H18} to prove a related result

We call a face of an oriented directed embedding a \emph{proface} or \emph{antiface} according to whether its facial directed walk is clockwise or anticlockwise, respectively. 

Our main focus is on graph embeddings with two euler circuit faces.

\begin{definition} \label{bi-eul} 
An embedding of a graph is \emph{bi-eulerian} if it has two faces, each bounded by an euler circuit.  A bi-eulerian embedding of an undirected graph may be orientable or nonorientable.
An embedding of a digraph is \emph{bi-eulerian} if it has two faces, each bounded by an euler directed circuit. A bi-eulerian embedding of a digraph is necessarily a directed embedding, and thus, by Lemma \ref{diremb-ori-2fc}, orientable.
\end{definition}

The term \emph{bi-eulerian} was used with a different meaning by Xuong \cite[p.~218]{Xuo79b}. We trust that our reuse of this term will cause no confusion.  (The term \emph{eulerian embedding} has also been used in the literature, to mean embeddings where there is one `straight-ahead' walk that is an euler circuit \cite{PTZ04}.)

\subsection{Transitions and transition graphs}\label{ssec:transition}

Some of the results below require specifying how a walk transitions from one edge to another as it passes through a vertex.  The transitions and transition graphs defined here provide the necessary formalism. 
 
\begin{definition}
Let $v$ be a vertex of a graph $G$. A \emph{transition at $v$} is an unordered pair of (possibly equal) elements of $E_G^*(v)$.
A \emph{transition graph at $v$} is a graph (multiple edges and loops allowed) with vertex set identified with the elements of $E_G^*(v)$; thus, each edge specifies a transition at $v$.
We can analogously define transitions and transition graphs at $v$ in a digraph $D$ by replacing $E_G^*(v)$ by $A_D^*(v)$ (although we will not need these in this paper).
\end{definition}

Note that these transition graphs are related to, but not the same as, the transition graphs defined in \cite{ESZtripnon}. A transition graph at $v$ that is $1$-regular, i.e., that corresponds to a partition of the half-edges into pairs, is equivalent to a \emph{transition system at $v$} as defined by Fleischner \cite[p. III.40]{Fle90}.

We now form transition graphs from walks.

\begin{definition} Let $W = (v_0 g_1 h_1 v_1 g_2 h_2 v_2 \dots v_{\ell-1} g_0 h_0)$ be a closed walk in a graph $G$.  Let $v \in V(G)$.  We form the \emph{transition graph of $W$ at $v$}, denoted $\itr(W,v)$ as follows.
If $W$ does not use $v$ then $\itr(W,v)$ is an edgeless graph with vertex set $E_G^*(v)$.
Otherwise, suppose $v$ occurs $r$ times along $W$, as vertices $v_{i(0)}, v_{i(1)}, \ldots, v_{i(r-1)}$ where $0 \le i(0) < i(1) < i(2) \dots < i(r-1) \le \ell-1$.
Then $\itr(W,v)$ contains $r$ edges, one edge joining $h_{i(j)}$ and $g_{i(j)+1}$ for each $j \in \mZ_r$.  See Figure \ref{coherent} at top right for an example, where $E_G^*(v) = \set{g_1, g_2, \dots, g_6}$.
Transition graphs are defined analogously for walks (directed or not) in digraphs.
\end{definition}

\begin{figure}[h!]
  \centering
   \hbox to \hsize{\hfil
    \includegraphics[scale=1.2]{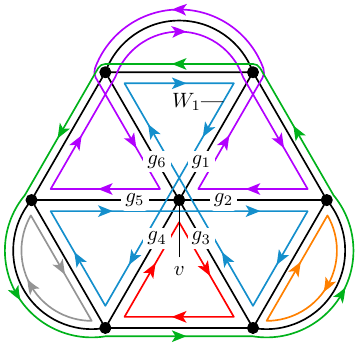}\hfil
    \vbox{\hbox{\includegraphics[scale=1.2]{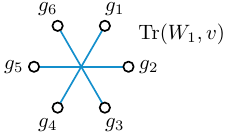}}
        \vskip20pt
        \hbox{\includegraphics[scale=1.2]{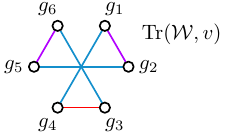}}
        \vskip10pt \hrule height0pt
    }\hfil
   }
 \caption{A collection of walks $\cW$ with transition graphs at $v$.} 
  \label{coherent}
  \end{figure} 

We now extend transition graphs to multiple walks.  

\begin{definition} Let $\cW = \{W_1, W_2, \dots, W_k\}$ be a collection (multiset) of closed walks in a graph $G$, for example, a set of facial walks.  For such a collection $\cW$, we define the \emph{transition graph of $\cW$ at $v$,} denoted $\itr(\cW,v)$, as follows.  Its vertex set is the same as that of each  $\itr(W_i,v)$, namely $E_G^*(v)$.   Its edges are the edges of all of the $\itr(W_i,v)$ for $1 \le i \le k$, with repetition.  In Figure \ref{coherent}, $\cW$ is the collection of all walks shown and $\itr(\cW, v)$ is given at bottom right.
This definition also extends to digraphs in the natural way.  
\end{definition}

We emphasize that in a general collection of walks, a given transition can occur more than once (even in a single closed walk) and the transition graph has a separate edge for each time the transition occurs, so it may have multiple edges.  Furthermore loops can arise in transition graphs corresponding to walks that enter and then immediately leave a vertex along the same edge.

\begin{definition}
A collection $\cW$ is \emph{cyclically compatible} at a vertex $v$ if $\itr(\cW,v)$ is a single cycle using every element of $E^*(v)$.  We say that $\cW$ is cyclically compatible if it is cyclically compatible at every vertex.  For example, in Figure \ref{coherent} the collection $\cW$ is cyclically compatible at $v$, and the reader may check that this also holds at the other vertices.
\end{definition}

For pairs of euler circuits, or pairs of circuit decompositions, cyclic compatibility is stronger than compatibility in the sense of Fleischner \cite[pp. III.40--41]{Fle90}. 

The following lemma states standard results on graph embeddings using our terminology.  As an illustration, the collection of walks in Figure \ref{coherent} satisfies both parts of this lemma, and therefore represents an orientable embedding of the graph (in the double torus).

\begin{lemma}\label{standardemb}
Let $\cW$ be a collection of closed walks in a graph $G$.
\begin{enumerate}[(a), nosep]
\item \label{standardemb1}
The collection $\cW$ is the collection of facial walks of some (not necessarily orientable) embedding of $G$ if and only if $\cW$ is cyclically compatible.

\item \label{standardemb2}
If $\cW$ is the collection of facial walks of an embedding of $G$, then the embedding is orientable if and only if the elements of $\cW$ can be oriented so that each edge is used once in each direction.
\end{enumerate}
\end{lemma}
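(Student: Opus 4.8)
The plan is to treat both parts as translations of two standard surface facts into the transition-graph language: for \ref{standardemb1}, that a point of a closed surface has a circle as its link, and for \ref{standardemb2}, that a surface is orientable iff its faces admit a coherent orientation. I would make the dictionary explicit through the polygon-gluing (band-decomposition) model of a cellular embedding. For the forward direction of \ref{standardemb1}, suppose $\cW$ is the collection of facial walks of a cellular embedding of $G$ in a surface $\Sigma$. Cutting $\Sigma$ along $G$ decomposes it into closed disks, one per facial walk, so $\Sigma$ is recovered by gluing these polygons in pairs along the edges of $G$, each edge receiving exactly two polygon-sides. Fix a vertex $v$ and examine its link, which is a simple closed curve since $\Sigma$ is a $2$-manifold. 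This link meets $G$ exactly in the half-edges of $E_G^*(v)$ and meets the face-disks in one corner per passage of a walk through $v$; two consecutive corners are joined precisely when they share a half-edge of $E_G^*(v)$. This incidence pattern is exactly $\itr(\cW,v)$: its vertices are the half-edges at $v$ and its edges are the corners, each joining the arriving and departing half-edges $h_{i(j)},g_{i(j)+1}$. Because the link is a single circle, $\itr(\cW,v)$ is a single cycle through all of $E_G^*(v)$, and since $v$ was arbitrary, $\cW$ is cyclically compatible.

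For the converse of \ref{standardemb1}, assume $\cW$ is cyclically compatible. First note that $2$-regularity of each $\itr(\cW,v)$ forces every edge of $G$ to be traversed exactly twice in total by the walks of $\cW$. Build a space $\Sigma$ by taking one polygon per walk $W_i$, its boundary subdivided into edge-arcs (one per traversed edge) separated by corners (one per vertex-passage), and then gluing the two arcs labelled by each edge $e=\set{h_1,h_2}$ so that their $\incv(h_1)$-ends are identified and their $\incv(h_2)$-ends are identified. Along edge interiors exactly two sides meet, so $\Sigma$ is a manifold there; around each vertex $v$ the corners are linked into a $1$-manifold whose components correspond exactly to the cycles of $\itr(\cW,v)$, so single-cycle compatibility makes the link of $v$ a single circle. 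Hence $\Sigma$ is a closed surface, $G$ embeds in it cellularly, and by construction the facial walks are precisely $\cW$.

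For \ref{standardemb2} I would reuse the same polygon model. If $\Sigma$ is orientable, fix an orientation; it induces a boundary orientation on each face-disk, hence a direction on each walk of $\cW$, and along any edge the two bordering corners receive opposite directions, so each edge is used once in each direction. Conversely, orienting the walks so that each edge is used once in each direction orients every face-polygon in a way that makes the identifications along edges orientation-reversing; these local orientations then agree into a global orientation of $\Sigma$. This is the same reversal argument used in Lemma \ref{diremb-ori-2fc} and can equally be read off from the rotation-signature description in \cite{GT,MT}.

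The main obstacle is entirely bookkeeping in the converse of \ref{standardemb1}: verifying carefully that the corners around $v$ glue into components matching the cycles of $\itr(\cW,v)$, while correctly handling walks that pass through $v$ several times, loops (a walk backtracking along a single edge, which produces a loop in $\itr(\cW,v)$), and multiple edges. The half-edge formalism of Section \ref{sec:term} is exactly what makes this precise, since each corner is flanked by two well-defined elements of $E_G^*(v)$ even in these degenerate cases, so the link-to-transition-graph dictionary holds verbatim.
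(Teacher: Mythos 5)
Your proof is correct: the polygon-gluing (band-decomposition) construction, with the link-of-a-vertex argument identifying components of the link with cycles of $\itr(\cW,v)$, is exactly the standard argument behind this statement, and your use of the half-edge formalism handles loops, multiple edges, and repeated passages cleanly. The paper itself gives no proof at all — it states Lemma \ref{standardemb} as a translation of standard facts about cellular embeddings (deferring to its references on rotation systems and band decompositions) — so your write-up simply supplies, correctly, the standard argument the paper takes for granted.
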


Lemma \ref{standardemb} provides conditions under which a collection of facial walks describes an embedding.  This can be viewed as an alternative definition of an embedding.  In the rest of this paper we often consider embeddings from this perspective, and identify a face with its boundary walk.

\subsection{Relative embeddings and upper relative embeddings}

In some situations we will be able to construct bi-eulerian embeddings of an eulerian graph $G$ where one of the euler circuit faces is specified in advance.  It is also natural in certain settings (see Section \ref{sec:circdec}) to specify a more general collection $\cW$ of closed walks that covers every edge exactly once, and ask if $\cW$ can be completed with an euler circuit to become the collection of face boundaries of an embedding.  The collection $\cW$ must be a \emph{circuit decomposition}, partitioning $E(G)$ into circuits.  Similarly, we can try to complete a directed circuit decomposition of an eulerian digraph to a directed embedding using a directed euler circuit.  Some relevant standard terminology follows.

Suppose $G$ is a connected graph and $\cW$ is a collection of closed walks in $G$.  If $\cW'$ is another collection of closed walks in $G$ such that $\cW$ and $\cW'$ together form the facial walks of an embedding $\Phi$, then we say $\Phi$ is an \emph{embedding of $G$ relative to $\cW$}, and that the faces bounded by elements of $\cW$ are \emph{inner} faces, while those bounded by elements of $\cW'$ are \emph{outer} faces.
If $\Phi$ is orientable, $\cW$ is a circuit decomposition of $G$, and there are only one or two outer faces, then we say that $\Phi$ is an \emph{upper embedding of $G$ relative to $\cW$}.  (We require $\cW$ to be a circuit decomposition to avoid complications that arise in more general situations.)  In particular, if we complete $\cW$ to an orientable embedding by adding an euler circuit, then we have an upper relative embedding.
If an upper embedding of $G$ relative to $\cW$ exists, it has maximum genus over all orientable embeddings of $G$ relative to $\cW$.  We can also apply these concepts to collections of directed walks, directed circuit decompositions, and directed embeddings of digraphs.

\section{Necessary conditions for orientable bi-eulerian embeddings}
\label{sec:nec}

In Sections \ref{sec:nec} through \ref{sec:circdec} we examine the existence of bi-eulerian and related embeddings in orientable surfaces.  The results largely concern the degrees of the vertices in eulerian graphs or digraphs, and in particular how many vertices have degrees congruent to $0$ mod $4$ as opposed to $2$ mod $4$.

We begin with an observation based on Euler's formula, which leads to some simple necessary conditions for an orientable bi-eulerian embedding.  These conditions generalize \cite[Lemma 8 and Corollary 9]{ABJ96} and \cite[Corollary 3.10]{YJ22}.

\begin{observation}\label{faceszeroverts}
Suppose we have an $n$-vertex $m$-edge eulerian graph or digraph with $\ell$ vertices of degree $0$ mod $4$.  By Euler's formula, an embedding with $r$ faces in an orientable surface of genus $g$ must have $r = m - n + 2 - 2g = \sum_{v \in V(G)} (\hf \deg(v)-1) + 2-2g$.  Vertices of degree $0$ mod $4$ make an odd contribution to the sum, while those of degree $2$ mod $4$ make an even contribution.  Thus, $r \equiv m-n \equiv \ell$ (mod $2$).
\end{observation}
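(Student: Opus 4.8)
The plan is to read off the two displayed equalities from Euler's formula and the handshake identity, and then finish with an elementary parity count; the whole argument is bookkeeping, so the main work is just making sure each congruence is justified uniformly for both graphs and digraphs.

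First I would apply Euler's formula. For a connected graph or digraph embedded in an orientable surface of genus $g$ we have Euler genus $\gamma = 2g$, so $|V|-|E|+|F| = 2-\gamma$ becomes $n - m + r = 2 - 2g$, which rearranges at once to the first equality $r = m - n + 2 - 2g$. Next I would establish the second equality by the handshake identity. For an undirected graph, $\sum_{v} \deg(v) = 2m$. For a digraph the relevant quantity is the total degree (indegree plus outdegree), as fixed in Section~\ref{sec:term}, and since each arc contributes exactly $1$ to the indegree of its head and $1$ to the outdegree of its tail, we again get $\sum_{v} \deg(v) = 2m$. Dividing by $2$ and subtracting $n = \sum_{v} 1$ then gives $\sum_{v}(\hf\deg(v) - 1) = m - n$, exactly the middle expression in the statement.

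The remaining step is the parity analysis. Because the graph or digraph is eulerian, every vertex has even degree, so each vertex has degree either $0$ or $2$ mod $4$ and this dichotomy is exhaustive. If $\deg(v) \equiv 0 \mmod 4$, say $\deg(v) = 4k$, then $\hf\deg(v) - 1 = 2k-1$ is odd; if $\deg(v) \equiv 2 \mmod 4$, say $\deg(v) = 4k+2$, then $\hf\deg(v) - 1 = 2k$ is even. Hence modulo $2$ the sum $\sum_{v}(\hf\deg(v)-1)$ counts precisely the $\ell$ vertices of degree $0$ mod $4$, giving $m - n \equiv \ell \pmod 2$. Finally, since $2 - 2g$ is even, $r = (m-n) + 2 - 2g \equiv m - n \equiv \ell \pmod 2$, which is the claimed congruence.

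There is no genuinely hard step here. The only points needing a little care are confirming that the handshake identity $\sum_{v}\deg(v) = 2m$ holds verbatim in the digraph case (using total degree), and noting explicitly that eulerianness forces all degrees to be even so that the $0$-versus-$2$ mod $4$ classification covers every vertex; once these are in place the parity count and the final congruence are immediate.
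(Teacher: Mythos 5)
Your proposal is correct and follows exactly the argument the paper itself packs into the statement of the observation: Euler's formula for the (cellular, connected) embedding, the handshake identity rewriting $m-n$ as $\sum_{v}(\frac12\deg(v)-1)$, and the parity split between degrees $0$ and $2$ mod $4$ (which, as you note, is exhaustive by eulerianness). Your added care about the digraph case (total degree) and the evenness of $2-2g$ just makes explicit what the paper leaves implicit; there is no substantive difference.
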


Since a bi-eulerian embedding has two faces, we have the following immediate consequence. 

\begin{proposition}\label{evenzeroverts}
If an $n$-vertex $m$-edge graph or a digraph  has an orientable bi-eulerian embedding, then the number of vertices of degree $0 \mmod 4$ is even (or equivalently $m-n$ is even).
\end{proposition}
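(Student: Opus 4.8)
The plan is to derive Proposition \ref{evenzeroverts} as a direct corollary of Observation \ref{faceszeroverts}, which has already established the parity relation $r \equiv m - n \equiv \ell \pmod 2$ for any orientable embedding with $r$ faces. The entire content of the proposition is the specialization of this congruence to the bi-eulerian case.

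First I would observe that a bi-eulerian embedding is, by Definition \ref{bi-eul}, an embedding with exactly two faces, so we may set $r = 2$. Substituting into the congruence from Observation \ref{faceszeroverts} gives $2 \equiv \ell \pmod 2$, i.e., $\ell \equiv 0 \pmod 2$. Since $\ell$ is precisely the number of vertices of degree $0 \bmod 4$, this shows that number must be even. The equivalent formulation ``$m - n$ is even'' follows immediately from the same chain of congruences, since Observation \ref{faceszeroverts} already records $m - n \equiv \ell \pmod 2$.

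I would want to confirm that Observation \ref{faceszeroverts} applies uniformly to both the graph and digraph cases, which it does, since it is stated for ``an eulerian graph or digraph'' and rests only on Euler's formula together with the degree bookkeeping in the sum $\sum_{v} (\tfrac12 \deg v - 1)$. For a digraph we use the total degree convention fixed in Section \ref{sec:term}, so the same count goes through verbatim. I should also note that a bi-eulerian embedding is orientable in the digraph case automatically (by the remark in Definition \ref{bi-eul} via Lemma \ref{diremb-ori-2fc}), and is assumed orientable in the undirected case by hypothesis, so the orientability premise of Observation \ref{faceszeroverts} is met in both settings.

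There is essentially no obstacle here: the proposition is a one-line deduction once the parity observation is in hand, and the only care required is to verify that the hypotheses of Observation \ref{faceszeroverts} (orientability, eulerian structure) are genuinely available in each of the graph and digraph cases before plugging in $r = 2$.
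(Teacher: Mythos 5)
Your proposal is correct and is exactly the paper's own argument: the proposition is stated as an ``immediate consequence'' of Observation \ref{faceszeroverts}, obtained by setting $r=2$ for a bi-eulerian embedding so that $\ell \equiv m-n \equiv r \equiv 0 \pmod 2$. Your additional checks (total-degree convention for digraphs, orientability via Lemma \ref{diremb-ori-2fc} in the digraph case) are consistent with the paper and require no changes.
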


Recall that an \emph{edge cut} in a connected graph $G$ is a set of edges $C$ each having one endpoint in  a proper nonempty subset $U$ of $V(G)$ and the other in the complement $V(G)-U$. We say $C$ is a \emph{$k$-edge cut} if $|C|=k$.

Suppose $G$ has a $2$-edge cut $\set{e,f}$ where $e = \set{e_1, e_2}$, $f = \set{f_1, f_2}$, and where $e_1$ and $f_1$ are the half-edges incident with vertices of $U$, and $e_2$ and $f_2$ are the half-edges incident with vertices of $V(G)-U$.  Then the \emph{$2$-edge-cut reduction} of $\set{e,f}$ replaces $e$ and $f$ by new edges $g_1=\set{e_1, f_1}$ and $g_2=\set{e_2, f_2}$, giving a graph with two components: $G_1$ on vertex set $U$ and $G_2$ on vertex set $V(G)-U$.  We also say that $G$ is a \emph{$2$-edge-join} of $G_1$ and $G_2$.  If $G$ is eulerian then both $G_1$ and $G_2$ are eulerian.

If $D$ is a connected digraph, a $k$-edge cut $C$ in $D$ again means a set of $k$ arcs joining $U$ to $V(D)-U$ where $U$ is a proper nonempty subset of $V(D)$.
If $D$ is eulerian then $k$ must be even, and $C$ must have $k/2$ arcs from $U$ to $V(D)-U$ and $k/2$ arcs from $V(D)-U$ to $U$.
Thus, if we have a $2$-edge cut in an eulerian digraph $D$, we can still perform a $2$-edge cut reduction to obtain two eulerian digraphs $D_1$ and $D_2$.  Conversely, we can also take a $2$-edge-join of two eulerian digraphs $D_1$ and $D_2$ to obtain a new eulerian digraph $D$.

\begin{observation}\label{2ecreduction}
Suppose $G$ is an eulerian graph and we perform a $2$-edge-cut reduction on $G$ to obtain $G_1$ and $G_2$.  Then $G$ has an orientable bi-eulerian embedding if and only if both $G_1$ and $G_2$ have orientable bi-eulerian embeddings.  This follows by cutting and splicing face boundaries.  Similarly, suppose $D$ is an eulerian digraph and we perform a $2$-edge-cut reduction on $D$ to obtain $D_1$ and $D_2$.  Then $D$ has a (necessarily orientable) bi-eulerian directed embedding if and only if both $D_1$ and $D_2$ have (necessarily orientable) directed bi-eulerian embeddings.
\end{observation}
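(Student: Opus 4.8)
The plan is to work throughout with the facial-walk description of embeddings from Lemma \ref{standardemb}, viewing an orientable bi-eulerian embedding as a cyclically compatible pair $\{T_1,T_2\}$ of euler circuits that, in the orientable case, can be oriented so every edge is used once in each direction. I would prove both implications by the promised cut-and-splice; since the two directions are essentially inverse, the real content is a single dictionary between pairs of euler circuits of $G$ and pairs of euler circuits of $G_1$ and $G_2$. The key structural fact is that, because $\{e,f\}$ is the \emph{only} set of edges joining $U$ to $V(G)-U$, every euler circuit $T_i$ uses $e$ exactly once and $f$ exactly once and so meets the cut exactly twice; deleting these two traversals splits $T_i$ into an arc $X_i$ lying entirely in $U$ (covering every edge of $G_1$ except $g_1$) and an arc $Y_i$ lying entirely in $V(G)-U$ (covering every edge of $G_2$ except $g_2$).

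For the forward direction I would take an orientable bi-eulerian embedding $\{T_1,T_2\}$ of $G$. Since the $2$-edge-cut reduction reuses the half-edges and only re-pairs them, $E^*_{G_1}(v)=E^*_G(v)$ for every $v\in U$; in particular $e_1,f_1$ become the half-edges of $g_1=\{e_1,f_1\}$. For each $i$ I close $X_i$ into an euler circuit $T_i^1$ of $G_1$ by appending $g_1$, and close $Y_i$ with $g_2$ to get $T_i^2$. Cyclic compatibility of $\{T_1^1,T_2^1\}$ follows from Lemma \ref{standardemb}\ref{standardemb1}: at every $v\in U$ the transitions used by the $T_i^1$ coincide with those used by the $T_i$, because at the cut vertices the transition pairing $e_1$ (resp.\ $f_1$) with its neighbour in each face is the same whether the walk continues across $e$ (resp.\ $f$) or along $g_1$. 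Hence $\itr(\{T_1^1,T_2^1\},v)=\itr(\{T_1,T_2\},v)$ is a single cycle for all $v\in U$, so $\{T_1^1,T_2^1\}$ is a bi-eulerian embedding of $G_1$; orientability follows from Lemma \ref{standardemb}\ref{standardemb2}, since orienting $T_1,T_2$ oppositely makes $X_1,X_2$ traverse the endpoints of $g_1$ in opposite senses, and then every edge of $G_1$ is used once in each direction. The argument for $G_2$ is identical.

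For the converse I would start from orientable bi-eulerian embeddings $\{S_1,S_2\}$ of $G_1$ and $\{R_1,R_2\}$ of $G_2$, delete $g_1$ and $g_2$ to recover the two $U$-arcs $X_1,X_2$ and the two $(V(G)-U)$-arcs $Y_1,Y_2$, then pair them and join them through $e$ and $f$ into two closed walks $T_1,T_2$ (the pairing and the directions of traversal of $e,f$ being forced by the requirement that the walks close up, and in the orientable case by matching the opposite orientations that $S_1,S_2$ induce on $g_1$ and that $R_1,R_2$ induce on $g_2$). Each $T_j$ is then an euler circuit of $G$, and the same local computation as before shows the transitions at the cut vertices agree with those of the given embeddings, so $\{T_1,T_2\}$ is cyclically compatible and orientable, giving an orientable bi-eulerian embedding of $G$.

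The digraph case runs along the same lines and is in fact cleaner: here $e$ is the arc directed $U\to V(D)-U$ and $f$ the arc directed $V(D)-U\to U$, and the reduction pairs the half-arcs so that $g_1$ runs from the tail of $e$ to the head of $f$ and $g_2$ from the tail of $f$ to the head of $e$. Because all faces are directed circuits they traverse $e,f$ in their given directions, the spliced walks automatically respect all arc directions, and a bi-eulerian directed embedding is orientable by Definition \ref{bi-eul}, so no separate orientability check is needed. I expect the only genuine obstacle to be the bookkeeping at the (up to four) cut vertices: one must confirm that each euler circuit meets the cut exactly twice, and that re-pairing the half-edges there leaves every $\itr(\,\cdot\,,v)$ a single cycle with the orientations (undirected case) or arc directions (directed case) matching up, so that the spliced faces really are the facial walks of an orientable, respectively directed, embedding.
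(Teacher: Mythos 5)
Your proof is correct and takes essentially the same approach as the paper: the paper justifies this Observation only by the phrase ``cutting and splicing face boundaries,'' and your dictionary between euler-circuit pairs of $G$ and those of $G_1, G_2$ --- splitting each circuit at its two cut traversals, closing the pieces with $g_1$ and $g_2$, and matching transitions and orientations at the (possibly coincident) cut vertices via Lemma \ref{standardemb} --- is precisely that splice carried out in full, including the cleaner directed case.
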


Applying Observation \ref{2ecreduction} and Proposition \ref{evenzeroverts} gives the following.

\begin{proposition}\label{badcut}
A graph or digraph with an orientable bi-eulerian embedding has no 2-edge cut with an odd number of vertices of degree congruent to $0 \mmod 4$ in each component of the 2-edge cut reduction. 
\end{proposition}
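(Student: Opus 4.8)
The plan is to argue by contradiction, combining the two immediately preceding results. Suppose $G$ (a graph or digraph) has an orientable bi-eulerian embedding but also admits a $2$-edge cut whose $2$-edge-cut reduction yields components $G_1$ (on $U$) and $G_2$ (on $V(G)-U$), each containing an odd number of vertices of degree $\equiv 0 \mmod 4$. I aim to derive a contradiction.

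First I would record that the $2$-edge-cut reduction preserves the degree of every vertex. In the half-edge framework of Section \ref{sec:term}, the reduction alters only the involution $\mate$ (re-pairing $e_1$ with $f_1$ and $e_2$ with $f_2$) while leaving the incidence map $\incv$ untouched; hence the set of half-edges $E^*(v)$ incident with each vertex $v$, and so $\deg(v)$, is the same in $G_i$ as in $G$. Consequently the number of vertices of degree $\equiv 0 \mmod 4$ in $G_1$ (respectively $G_2$) equals the number of such vertices lying in $U$ (respectively $V(G)-U$) as measured in $G$, so the hypothesis on the components transfers faithfully to $G_1$ and $G_2$.

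Next I would invoke Observation \ref{2ecreduction}: since $G$ has an orientable bi-eulerian embedding (in the digraph case a bi-eulerian directed embedding, which is necessarily orientable), both $G_1$ and $G_2$ inherit orientable bi-eulerian embeddings. Then Proposition \ref{evenzeroverts}, applied separately to $G_1$ and to $G_2$, forces each of them to have an \emph{even} number of vertices of degree $\equiv 0 \mmod 4$. This contradicts the assumption that each component has an odd number of such vertices, which completes the proof.

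I expect essentially no serious obstacle, as the statement is a direct corollary of the two cited results; indeed it shows even more, namely that \emph{both} components of any $2$-edge-cut reduction of such a graph or digraph have an even number of degree-$0 \mmod 4$ vertices. The only point deserving a moment's care is the degree-preservation observation of the second paragraph, since it is what guarantees that Proposition \ref{evenzeroverts} may legitimately be applied to $G_1$ and $G_2$ with the parities stated for the original components; once that is in place the contradiction is immediate.
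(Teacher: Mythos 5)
Your proof is correct and follows exactly the paper's approach: the paper derives this proposition by the same combination of Observation \ref{2ecreduction} (both sides of the reduction inherit orientable bi-eulerian embeddings) and Proposition \ref{evenzeroverts} (forcing an even count of degree-$0 \bmod 4$ vertices on each side). Your added remark that the reduction preserves vertex degrees is a correct detail the paper leaves implicit.
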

 
In other words, a $2$-edge cut with an odd number of vertices of degree $0 \mmod 4$ on one or both sides of the cut, which we call a \emph{bad $2$-edge cut}, prevents the existence of an orientable bi-eulerian embedding.

An graph or digraph is \emph{admissible} if it is eulerian and satisfies the necessary conditions given by Propositions~\ref{evenzeroverts} and~\ref{badcut}, i.e., it has an even number of vertices of degree $0$ mod $4$ and no bad $2$-edge-cut.  Admissibility is not sufficient to guarantee the existence of an orientable bi-eulerian embedding, as the following two examples show.

\begin{example}\label{cycledigons}
For $n\ge 2$ let $\ddc_n$ be the $4$-regular eulerian digraph obtained by replacing each edge in an $n$-cycle by a directed digon (two arcs in opposite directions).  Figure \ref{CounterEx}(a) depicts $\ddc_4$.
If $n$ is even then $\ddc_n$ has an even number of vertices of degree $0$ mod $4$ and no $2$-edge-cut (bad or otherwise), so it is admissible.  But the following analysis shows that $\ddc_n$ has an orientable bi-eulerian directed embedding only if $n=2$.  We analyze the number of faces for more general circuit decompositions as the general technique will also apply in later settings. 

Label the vertices of $\ddc_n$ as $v_0, v_1, v_2, \dots, v_{n-1}$ (subscripts interpreted modulo $n$) and suppose we have arcs $a_i$ from $v_i$ to $v_{i+1}$ and $b_i$ from $v_{i+1}$ to $v_i$ for each $i \in \set{0, 1, \dots, n-1}$.
By Lemmas \ref{diremb-ori-2fc} and \ref{standardemb}, an orientable directed embedding of $\ddc_n$ corresponds to two directed circuit decompositions $\cC_1$, $\cC_2$ such that $\cC_1 \cup \cC_2$ is cyclically compatible.  Say $\cC_i$ \emph{splits} at $v_j$ if $\cC_i$ has a circuit with consecutive arcs $a_{j-1}, b_{j-1}$ and a circuit (possibly the same circuit) with consecutive arcs $b_j, a_j$.
Exactly one of $\cC_1$ or $\cC_2$ must split at each vertex.  Assume each $\cC_i$ splits at $n_i$ vertices.  If $n_i = 0$ then $\cC_i$ consists of two hamilton cycles.  If $n_i \ge 1$ then $\cC_i$ consists of exactly $n_i$ circuits.  For a bi-eulerian embedding, we must have $n_1 = n_2 = 1$ and so $n = n_1+n_2 = 2$.
\end{example}

We can also construct undirected graphs with $6$ or more vertices of degree $0$ mod $4$ that satisfy the conditions from Propositions \ref{evenzeroverts} and \ref{badcut} but have no orientable bi-eulerian embedding, as in the following example.

\begin{example}\label{exforbconf}
Figure \ref{CounterEx}(b) shows a graph that is admissible but does not have an orientable bi-eulerian embedding.  This example was found by a computer search, and is the unique $4$-edge-connected eulerian graph on $12$ or fewer edges with this property.
\end{example}

\begin{figure}[h!]
\centering
\hbox to\hsize{(a)\hfil (b)\hfil \kern15pt(c)\kern-15pt\hfil}
\vskip-1.3\baselineskip
\hbox to\hsize{
    \qquad\raise15pt\hbox{\includegraphics[scale=1.2]{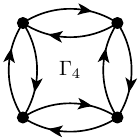}}\hfil
    \includegraphics[scale=1.2]{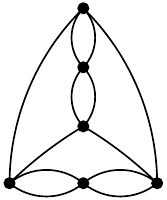}\hfil
    \raise7pt\hbox{\includegraphics[scale=1.2]{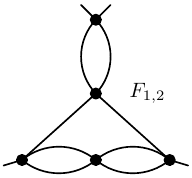}}
}
\caption{(a) $\ddc_4$. (b) Graph with no bi-eulerian embedding. (c) $F_{1,2}$. } 
\label{CounterEx}
\end{figure}

In fact,  there is an infinite family of configurations whose presence prevents the existence of an orientable bi-eulerian embedding even if the graph is admissible.
A \emph{chain of digons of length $s$} in a graph $G$ consists of $s+1$ vertices $v_0, v_1, \dots, v_s$, all of which have degree $4$ in $G$, such that $v_{i-1}$ and $v_i$ are joined by a digon (pair of parallel edges) for $1 \le i \le s$.  The configuration $F_{s,t}$ consists of a chain of digons of length $s$ and a chain of digons of length $t$, where the last vertex of the first chain is adjacent to both the first and last vertices of the second chain.
For example, $F_{1,2}$ is shown in Figure \ref{CounterEx}(c) (the first chain of digons is vertical, the second is horizontal).  It is the smallest configuration in this family whose presence prevents the existence of an orientable bi-eulerian embedding.
The graph of Figure \ref{CounterEx}(b) contains $F_{1,2}$.

\begin{proposition}\label{forbconf}
Suppose $G$ is an eulerian graph containing a configuration $F_{s,t}$ where either $s=1$ and $t$ is even, or $s \ge 2$ and $t \ge 2$.  Then $G$ has no orientable bi-eulerian embedding.
\end{proposition}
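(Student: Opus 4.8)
The plan is to assume, for contradiction, that $G$ has an orientable bi-eulerian embedding and to extract a forbidden sub-circuit inside the copy of $F_{s,t}$. By Definition~\ref{bi-eul} and Lemmas~\ref{diremb-ori-2fc} and~\ref{standardemb}, such an embedding amounts to a pair of euler circuits $T_1, T_2$ that together form the two faces, with $\{T_1,T_2\}$ cyclically compatible; equivalently, after orienting the edges along $T_1$ we obtain an eulerian digraph carrying a bi-eulerian directed embedding, so at every vertex the half-arcs alternate between entering and leaving (Observation~\ref{diremb-basic-s}\ref{db-alt-s}). I would work in this oriented picture throughout and analyze the transitions of $T_1$ and $T_2$ at the degree-$4$ vertices of $F_{s,t}$.

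The first step is a local dichotomy at a vertex $v$ meeting two digons (a ``left'' digon and a ``right'' digon), which is exactly the situation at the interior vertices $v_1,\dots,v_{s-1}$ and $w_1,\dots,w_{t-1}$ of the two chains. There are three perfect matchings of the four half-edges at $v$: the splitting matching $P_0$ pairing the two left half-edges and the two right half-edges, and the two crossing matchings $P_1,P_2$. Cyclic compatibility (Lemma~\ref{standardemb}\ref{standardemb1}) forces $T_1$ and $T_2$ to use two distinct matchings at $v$, so either exactly one of them splits (uses $P_0$) or neither does. Feeding in orientability, the alternation of Observation~\ref{diremb-basic-s}\ref{db-alt-s} shows that whether a split occurs is governed by the orientation type of the incident digons, and that this type propagates along a chain: each maximal chain of digons is uniformly either \emph{unidirectional} (every digon a pair of parallel arcs, no interior splits) or \emph{bidirectional} (every digon a directed $2$-cycle, exactly one face splitting at each interior vertex, with the splitting face alternating between $T_1$ and $T_2$). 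The engine of the argument is that no face may U-turn at both ends of the same digon, for then that digon would be a separate $2$-circuit of that face, contradicting that $T_1$ and $T_2$ are single euler circuits; this is the undirected analogue of the circuit count in Example~\ref{cycledigons}.

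The second step treats the two cases. A unidirectional chain is transparent: both faces thread straight through it, so it may be contracted to a pair of parallel arcs, reducing $F_{s,t}$ to a strictly smaller configuration. A bidirectional chain instead forces the alternating-split pattern above, and I would trace $T_1$ and $T_2$ once around the loop $v_s\,w_0\,w_1\cdots w_t\,v_s$ formed by the second chain together with the edges $v_sw_0$ and $v_sw_t$, recording which face U-turns on each digon at each of its ends. At the junction vertex $v_s$ (one digon plus the two single edges) and at the chain-ends $w_0,w_t$ (one digon, one edge to $v_s$, one edge leaving $F_{s,t}$), the eulerian and compatibility constraints tie the two ends of the loop together. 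Combining these with the alternation shows that one of $T_1,T_2$ is forced to close up a proper sub-circuit inside $F_{s,t}$, the contradiction. The parity of $t$ enters through the alternation along $w_1,\dots,w_{t-1}$: when $s=1$ there is no interior vertex on the first chain to absorb a parity shift at $v_s$, so the contradiction appears exactly when $t$ is even, whereas when $s\ge 2$ the interior vertices $v_1,\dots,v_{s-1}$ supply the needed flexibility and the contradiction holds for every $t\ge 2$.

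I expect the main obstacle to be the bookkeeping in this last step: tracking, for every admissible orientation type of the two chains and every choice of matchings at $v_s$, $w_0$, and $w_t$, which face U-turns on which digon end, and checking in each case that a proper closed sub-trail is unavoidable. Handling the endpoint vertices $v_0,w_0,w_t$ (whose remaining edges leave $F_{s,t}$ into the unknown rest of $G$) requires showing that the forced sub-circuit lies entirely within $F_{s,t}$ and is therefore independent of how the configuration is attached. This locality, together with the clean separation into the unidirectional-contract and bidirectional-count cases, is what should make the parity condition on $s$ and $t$ come out exactly as stated.
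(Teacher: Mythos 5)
Your skeleton matches the paper's Appendix proof (orient both euler circuits coherently, classify transitions at the degree-$4$ vertices, propagate a unidirectional/bidirectional dichotomy along each chain, and hunt for a proper subcircuit), but two load-bearing lemmas are missing or replaced by incorrect surrogates, and they are exactly where the dependence on $s$ and on the parity of $t$ lives. First, the parity mechanism sits in the wrong case. In the correct argument the second chain is forced to be \emph{unidirectional} (both oriented circuits enter it through $f_1$ and thread forward), and parity enters through \emph{crossings}, not splits: at each of the $t+1$ vertices exactly one of the two circuits crosses (the paper's step (\ref{X1b})), so for $t$ even the two circuits have opposite end-to-end connection patterns, forcing one of them to run $\bg_0^1 \to \bh_t^1$ and close a subcircuit through the first chain, while for $t$ odd the patterns agree and no contradiction arises. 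You instead attribute the parity of $t$ to alternation of splits (a bidirectional phenomenon) along $w_1,\dots,w_{t-1}$, and you propose to contract a unidirectional chain to a pair of parallel arcs. That contraction destroys precisely the crossing parity: an even-length chain joins the two circuits' strands with opposite patterns, an odd-length chain with equal patterns, so after contraction $t$ even and $t$ odd become indistinguishable. Since the non-existence claim is false for $s=1$ and $t$ odd (the paper notes after Theorem \ref{forbconf-main} that, e.g., $F_{1,3}$ occurs in graphs that do have orientable bi-eulerian embeddings), any argument surviving your contraction proves too much.

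Second, your bidirectional analysis is too weak to pin down $s$. Your engine --- no face may U-turn at both ends of a single digon --- only forbids the same circuit splitting at two \emph{adjacent} vertices, which yields alternation but nothing more. The proof needs the stronger statement (the paper's step (\ref{X2})): a single euler circuit cannot turn around at two vertices of a chain even when they are far apart, because the stretch between the two turn-arounds is trapped inside the chain and closes into a proper subcircuit. This forces every bidirectional chain to have length exactly $1$; combined with the forced directions of $f_1$ (leaving $v_s$) and $f_2$ (entering $v_s$), which make the first chain bidirectional at $v_s$, it is what yields $s=1$ and hence the contradiction for all $s\ge 2$, independent of any parity. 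Your stated reason for the $s\ge2$ case (``interior vertices supply the needed flexibility and the contradiction holds'') is not a mechanism and reads backwards --- flexibility would avoid a contradiction, not create one. So while your approach belongs to the same family as the paper's, the two pivotal steps --- crossing parity for unidirectional chains and the once-only turn-around lemma for bidirectional chains --- are absent, and the substitutes you offer cannot produce the stated conditions on $(s,t)$.
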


The proof, given in detail in the Appendix, 
%\ref{app:forbconf}, 
involves a case analysis of how the euler circuits forming the face boundaries of a potential orientable bi-eulerian embedding would pass through the two chains of digons.  The ideas are similar to those used in Example \ref{cycledigons}, but the analysis is more complicated because edges do not have specified directions.

\section{Sufficient conditions for orientable bi-eulerian embeddings}
\label{sec:suff}

To find maximum genus directed embeddings it is natural to leverage successful techniques for finding maximum orientable genus embeddings for undirected eulerian graphs.  A vertex identification method due to \yavorskii. \cite{Yav73} was used by  Glukhov \cite{Glu77} to show that connected graphs with all vertices of degree $2 \mmod 4$ have a $2$-face orientable embedding, and so are upper embeddable.
\skoviera. and Nedela \cite{SN90} extended Glukhov's result to all eulerian graphs, showing how vertices of degree $0$ mod $4$ can affect the minimum number of faces in an embedding, and hence the maximum genus. We adapt the vertex identification technique to directed embeddings, in a controlled way that allows us to produce bi-eulerian embeddings.  As we might expect from Section \ref{sec:nec}, the vertex degrees modulo $4$ play an important role.

\subsection{Vertex identifications for directed embeddings}

Given two digraphs $D$ and $F$, we say that a function $f: V(D) \to V(F)$ is a \emph{vertex identification from $D$ to $F$} if $f$ is a surjection, and $F$ is obtained from $D$ by identifying $f\iv(w) \subseteq V(D)$ into a single vertex $w$ for all $w \in V(F)$.
We write $F = D/f$.  Note that $D$ and $F$ have the same half-arcs and arcs, and their incidence functions satisfy $\psi_F = f \circ \psi_D$.
If $W$ is a walk in $D$, then $W/f$ denotes the corresponding walk in $D/f$, where we keep the same half-arcs but replace each vertex $v$ by $f(v)$.

Lemmas \ref{ident2vert} and \ref {ident3vert} and Corollary \ref {identvert} that follow describe the effect of some vertex identifications on orientable directed embeddings. The proofs use concatenations of walks that share a vertex, so that if $A = u g \dots h v   $ and $B = u'g'\dots h'v' $ are walks with $v=u'$, then $A \mw B$ is the walk $ u g \dots h v g'\dots h'v'$.  

\begin{lemma}[Identifying two vertices]
\label{ident2vert}
Let $\Phi$ be an orientable directed embedding of an eulerian digraph $D$, and suppose $f$ identifies distinct vertices $v_1, v_2$ of $D$ into a single vertex $v$, while leaving other vertices unchanged.  Let $A_1$ and $A_2$ be antifaces of $D$ incident with $v_1$ and $v_2$, respectively, and let $\cA$  be the set containing $A_1$ and $A_2$ (which may or may not be distinct).
Then $D/f$ has an orientable directed embedding $\Phi'$ with faces as follows.
\begin{enumerate}[(a), nosep]
\item If  $A_1 \ne A_2$, so $|\cA| = 2$, then $\Phi'$ has a single antiface replacing $A_1$ and $A_2$.

\item If $A_1 = A_2$, so $|\cA| = 1$,  then $\Phi'$ has two antifaces replacing the single antiface $A_1=A_2$.

\end{enumerate}
In both cases the set of new antifaces $\cA'$ uses the same set of half-arcs as $\cA$.  Furthermore, $\Phi'$ has a face $B/f$ for every face $B \notin \cA$ of $\Phi$, and $|\cA'| \le |\cA|+1$.
\end{lemma}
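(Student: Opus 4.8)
The plan is to work with $\Phi$ as an oriented embedding, recorded by a rotation system with all positive edge signatures. By Observation~\ref{diremb-basic-s}\ref{db-alt-s}, at every vertex the incoming and outgoing half-arcs alternate in the rotation. I will build the rotation of $\Phi'$ at the merged vertex $v$ so that this alternation is preserved, keeping all other rotations and all signatures unchanged. Applying Observation~\ref{diremb-basic-s}\ref{db-alt-s} again then guarantees that $\Phi'$ is a directed embedding, and since the signatures remain positive it is orientable with the same global orientation as $\Phi$; in particular the notions of proface and antiface carry over to $\Phi'$.

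First I would pin down the two corners to splice. Since $A_i$ is an antiface incident with $v_i$, it has a corner at $v_i$: a consecutive pair in the rotation at $v_i$ consisting of an incoming half-arc $h_i$ immediately followed by an outgoing half-arc $g_i$, this being the transition of $A_i$ through $v_i$. I then define the rotation at $v$ by opening the rotation at $v_1$ between $h_1$ and $g_1$, opening the rotation at $v_2$ between $h_2$ and $g_2$, and splicing: reading cyclically at $v$, after $h_1$ I insert the entire rotation of $v_2$ read from $g_2$ around to $h_2$, then continue with $g_1$ and the remainder of $v_1$'s rotation. Checking alternation is a short local computation, since the only new consecutive pairs are $h_1$ (incoming) followed by $g_2$ (outgoing) and $h_2$ (incoming) followed by $g_1$ (outgoing), both alternating, while every other consecutive pair is inherited from $\Phi$.

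The heart of the argument is the face analysis, which I would carry out by face tracing at the level of corners. The splice alters exactly two corners of $\Phi$: it destroys the antiface corner $h_1 \to g_1$ of $A_1$ and the antiface corner $h_2 \to g_2$ of $A_2$, and it creates two new antiface corners $h_1 \to g_2$ and $h_2 \to g_1$; every other corner, at $v_1$, at $v_2$, and elsewhere, is untouched. Hence every face $B \notin \cA$ is unchanged except for the renaming of $v_1, v_2$ to $v$, so it becomes $B/f$. For the antifaces, tracing shows that the new corners reroute $A_1$ into $A_2$ and back (cutting each $A_i$ open at its $v_i$-corner into a walk from $g_i$ to $h_i$, the merged boundary is essentially $A_1'\mw A_2'$). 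If $A_1 \ne A_2$ this fuses the two antifaces into a single antiface on the same half-arc set, giving case~(a); if $A_1 = A_2$ the splice is performed at two distinct corners (one at $v_1$, one at $v_2$) of one antiface, which splits it into two antifaces on the same half-arc set, giving case~(b). In either case $\cA'$ uses exactly the half-arcs of $\cA$, and $|\cA'| = |\cA| \mp 1 \le |\cA|+1$.

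The step I expect to be most delicate is verifying that the splice produces exactly the claimed merge-or-split behaviour, and that the resulting faces are genuinely antifaces rather than profaces. This hinges on the parity of corner types in an alternating rotation: under the global orientation the ``incoming-then-outgoing'' corners are precisely the antiface corners, so the two corners destroyed and the two created by the splice are all of antiface type. I would establish this correspondence explicitly once, using the face-tracing rule for the oriented rotation system, after which the conclusion follows from the standard dichotomy that altering a rotation at two corners merges two faces when the corners lie in distinct faces and splits one face when they coincide.
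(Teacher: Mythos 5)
Your proposal is correct and follows essentially the same route as the paper's proof: you splice the rotations at $v_1$ and $v_2$ at the antiface corners $h_1g_1$ and $h_2g_2$ to obtain the rotation $(g_1\,\alpha_1\,h_1\,g_2\,\alpha_2\,h_2)$ at $v$, and then trace faces to see that distinct antifaces merge into one while a common antiface splits into two, exactly as in the paper. The only difference is that you make explicit some checks the paper leaves implicit (alternation of half-arcs, preservation of orientation, and the identification of antiface corners with incoming-then-outgoing pairs), which is fine.
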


\begin{figure}[ht!]
  \centering
  \begin{subfigure}{\textwidth}
    \hbox to\hsize{\hfil\includegraphics[scale=1.2]{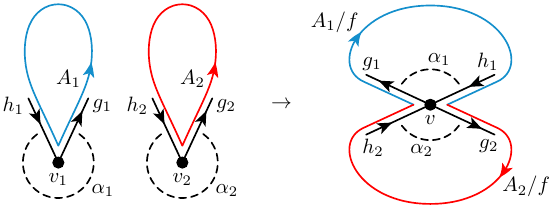}\hfil}
    \subcaption{$A_1 \ne A_2$, i.e., $|\cA|=2$.}\label{i2a}
  \end{subfigure}
  \begin{subfigure}{\textwidth}
    \hbox to \hsize{\hfil\includegraphics[scale=1.2]{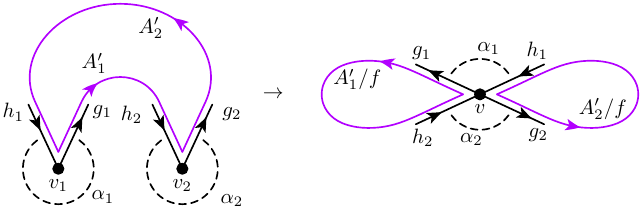}\hfil}
    \subcaption{$A_1 = A_2$, i.e., $|\cA|=1$.}\label{i2b}
  \end{subfigure}
  \caption{Cases for Lemma \ref{ident2vert}.}
\label{fig-identify2}
\end{figure}

\begin{proof}
Since each $A_i$ is incident with $v_i$, we may assume that it has the form $(\dots h_i v_i g_i \dots)$, or equivalently $v_i g_i \dots h_i v_i$, for some incoming half-arc $h_i$ and outgoing half-arc $g_i$.  Since $A_i$ is an antiface, the face bounded by $A_i$ must appear immediately after $h_i$ and immediately before $g_i$ in the rotation (clockwise sequence of half-arcs) determined by $\Phi$ at $v_i$.  The full rotation at $v_i$ can be written as $(g_i \al_i h_i)$, where $\al_i$ is the (possibly empty) sequence of the rest of the half-arcs incident with $v_i$.  

We define $\Phi'$ by letting the rotation at $v$ be $(g_1 \al_1 h_1 g_2 \al_2 h_2)$, and leaving the rotations of all the other vertices unchanged.  We now observe the effect of this new rotation scheme on the facial walks.  See Figure \ref{fig-identify2}.

(a) If $A_1 \ne A_2$ then think of each $A_i$ as a walk $v_i g_i \dots h_i v_i$. The new rotation at $v$ means that in $\Phi'$, $A_1$ and $A_2$ are replaced by one antiface with boundary $(A_1/f) \mw (A_2/f)$.  See Figure \ref{i2a}.

(b) If $A_1 = A_2 = A$, then, since $v_1$ and $v_2$ are distinct, we may view $A$ as consisting of two trails, one from $v_1$ to $v_2$ followed by one from $v_2$ to $v_1$.  Thus, we can write $A = A_1' \mw A_2'$ where $A_1' = v_1 g_1 \dots h_2 v_2$ and $A_2' = v_2 g_2 \dots h_1 v_1$.  Then $A_1'/f$ and $A_2'/f$ are antifaces in $\Phi'$ replacing $A$.  See Figure \ref{i2b}.
\end{proof}

\begin{lemma}[Identifying three vertices]
\label{ident3vert}
Let $\Phi$ be an orientable directed embedding of an eulerian digraph $D$, and suppose $f$ identifies distinct vertices $v_1, v_2, v_3$ of $D$ into a single vertex $v$, while leaving other vertices unchanged.  Let $A_i$ be an antiface incident with $v_i$ for $i \in \set{1, 2, 3}$, and let $\cA$ be the set containing  $A_1$, $A_2$, and $A_3$, which may not be distinct.
Then $D/f$ has an orientable directed embedding $\Phi'$ with faces as follows.
\begin{enumerate}[(a), nosep]

\item If $|\cA| = 3$ then $\Phi'$ has a single antiface replacing the antifaces in $\cA$.

\item If $|\cA| = 2$ then $\Phi'$ has two new antifaces replacing the antifaces in $\cA$.

\item If $|\cA| = 1$ then $\Phi'$ has a single new antiface replacing the antiface in $\cA$.

\end{enumerate} In all cases the set of new antifaces $\cA'$ uses the same set of half-arcs as $\cA$.  Furthermore, $\Phi'$ has a face $B/f$ for every face $B \notin \cA$ of $\Phi$, and $|\cA'| \le |\cA|$.
\end{lemma}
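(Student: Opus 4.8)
The plan is to follow the same rotation-splicing strategy as in the proof of Lemma \ref{ident2vert}, now merging three rotations rather than two. As there, since $A_i$ is an antiface incident with $v_i$, I may write it so that it arrives at $v_i$ along an incoming half-arc $h_i$ and leaves along an outgoing half-arc $g_i$, with the rotation at $v_i$ equal to $(g_i \al_i h_i)$, the corner between $h_i$ and $g_i$ being the one occupied by $A_i$. I then define $\Phi'$ by replacing the three rotations with a single cyclic rotation at $v$, obtained by concatenating the three blocks $g_i \al_i h_i$ in some order, and leaving every other vertex unchanged. Because each block begins with an outgoing half-arc and ends with an incoming half-arc, the merged rotation still alternates between entering and leaving half-arcs, so by Observation \ref{diremb-basic-s}\ref{db-alt-s} it is again a directed embedding.

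Next I would verify orientability and the fate of the faces not in $\cA$. The only corners that change are the three antiface corners between $h_i$ and $g_i$, which are destroyed and replaced by the three corners joining the end of one block to the start of the next. Hence every proface, and every antiface of $\Phi$ not in $\cA$, survives unchanged (as $B/f$), so the proface/antiface $2$-face-coloring of $\Phi$ is inherited by $\Phi'$; by Lemma \ref{diremb-ori-2fc}, $\Phi'$ is orientable. It then remains only to trace the new antifaces $\cA'$ produced from $\cA$, which use exactly the half-arcs used by $\cA$.

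For the face count I would trace how the re-spliced corners reconnect the pieces of $\cA$, splitting into the three cases by $|\cA|$. When $|\cA|=3$ the three distinct antifaces are strung together by the cyclic re-splicing into a single new antiface, for either choice of concatenation order. When $|\cA|=2$, say $A_1=A_2\neq A_3$, the doubled antiface contributes two trail-segments and the third antiface one; tracing shows that, again for either order, these recombine into exactly two new antifaces. The delicate case is $|\cA|=1$: here a single antiface $A$ passes through all three chosen corners, cutting $A$ into three segments, and the two possible cyclic concatenation orders give genuinely different outcomes, one reassembling the three segments into a single closed walk and the other breaking them into three. I would therefore choose the concatenation order to match the cyclic order in which $A$ visits the corners at $v_1,v_2,v_3$; tracing then shows the three segments close up into a single new antiface. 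In every case this gives the stated face structure and $|\cA'|\le|\cA|$.

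The main obstacle is precisely this $|\cA|=1$ case. Unlike Lemma \ref{ident2vert}, where there is essentially only one way to merge two blocks, merging three blocks offers two inequivalent cyclic orders, and only the one matching the traversal order of $A$ avoids splitting the antiface into three pieces. (Equivalently, one could realize the identification as two successive applications of Lemma \ref{ident2vert}, but the same choice then reappears as the need to select, at the second step, the antiface at the already-merged vertex that does \emph{not} carry the chosen corner of $A_3$.) Checking that the matching order merges the three segments into one, while the non-matching order does not, is the single computation that must be done with care; the $|\cA|\in\{2,3\}$ cases are order-independent and parallel the two-vertex argument directly.
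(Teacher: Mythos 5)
Your proposal is correct and follows essentially the same argument as the paper: splice the three rotations $(g_i \al_i h_i)$ into one rotation at $v$ and trace the resulting faces, splitting into cases by $|\cA|$. The one subtle point you rightly flag---that in the $|\cA|=1$ case the concatenation order must match the cyclic order in which $A$ visits the three chosen corners, since the other order splits $A$ into three antifaces---is exactly what the paper handles implicitly by its ``without loss of generality'' labelling $A_1 = A_1' \mw A_2' \mw A_3'$ before fixing the rotation $(g_1 \al_1 h_1 g_2 \al_2 h_2 g_3 \al_3 h_3)$, and the paper's remark after the lemma confirms your observation that the wrong order yields three antifaces.
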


\begin{figure}[ht!]
  \centering
  \begin{subfigure}{\textwidth}
    \hbox to\hsize{\hfil\includegraphics[scale=1.2]{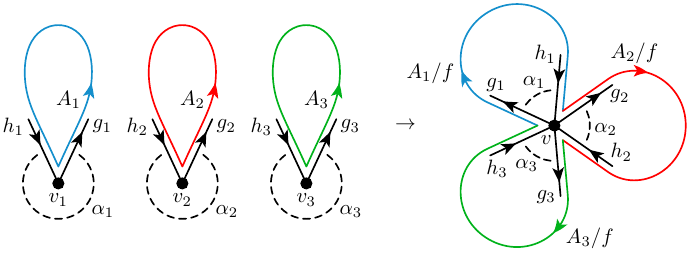}\hfil}
    \subcaption{$|\cA|=3$.}\label{i3a}
  \end{subfigure}
  \begin{subfigure}{\textwidth}
    \hbox to \hsize{\hfil\includegraphics[scale=1.2]{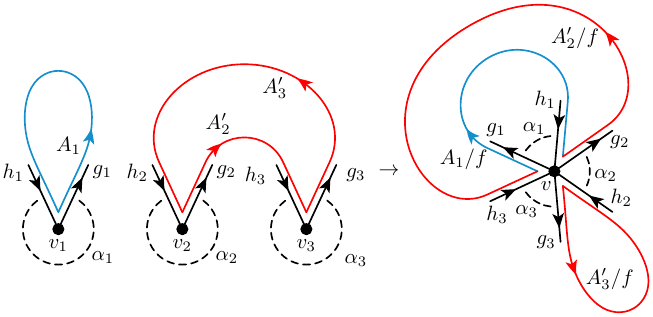}\hfil}
    \subcaption{$|\cA|=2$.}\label{i3b}
  \end{subfigure}
    \begin{subfigure}{\textwidth}
    \hbox to \hsize{\hfil\includegraphics[scale=1.2]{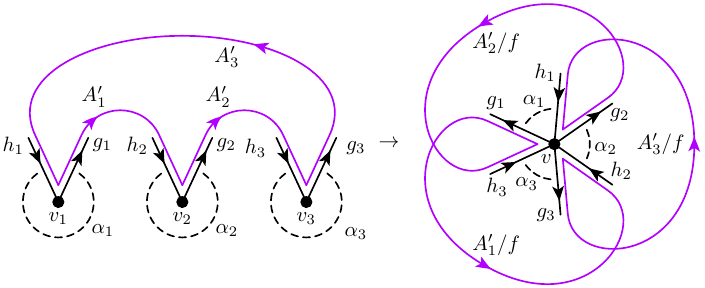}\hfil}
    \subcaption{$|\cA|=1$.}\label{i3c}
  \end{subfigure}
  \caption{Cases for Lemma \ref{ident3vert}.}
\label{fig-identify3}
\end{figure}

\begin{proof}
Since each $A_i$ is incident with $v_i$, as in the proof of Lemma \ref{ident2vert}, we may assume that it has the form $v_i g_i \dots h_i v_i$, for some incoming half-arc $h_i$ and outgoing half-arc $g_i$.  The rotation (clockwise sequence of half-arcs) determined by $\Phi$ at each $v_i$ can then be written as $(g_i \al_i h_i)$, where $\al_i$ is the (possibly empty) sequence of the rest of the half-arcs incident with $v_i$. We now observe the effect of this new rotation scheme on the facial walks.  See Figure \ref{fig-identify3}.   

(a) If $|\cA| = 3$, consider each $A_i$ as a walk $v_i g_i \dots h_i v_i$.  If we define the rotation at $v$ as $(g_1\al_1h_1g_2\al_2h_2g_3\al_3h_3)$ then $A_1, A_2, A_3$ are replaced by a single antiface with boundary $(A_1/f)\mw (A_2/f) \mw (A_3/f)$.  See Figure \ref{i3a}.

(b) If $|\cA| = 2$, suppose without loss of generality that $A_2 = A_3$.  Consider $A_1$ as the walk $v_1 g_1 \dots h_1 v_1$, and write $A_2$ as $A_2' \mw A_3'$ where $A_2' = v_2 g_2 \dots h_3 v_3$ and $A_3' = v_3 g_3 \dots h_2 v_2$.  If we define the rotation at $v$ as $(g_1 \al_1 h_1 g_2 \al_2 h_2 g_3 \al_3 h_3)$ then $A_1$ and $A_2$ are replaced by two antifaces $(A_1/f) \mw (A_2'/f)$ and $A_3'/f$.  See Figure \ref{i3b}.

(c) If $|\cA| = 1$, we may suppose without loss of generality that $A_1 = A_1' \mw A_2' \mw A_3'$ where $A_1' = v_1 g_1 \dots h_2 v_2$, $A_2' = v_2 g_2 \dots h_3 v_3$, and $A_3' = v_3 g_3 \dots h_1 v_1$.  If we define the rotation at $v$ as $(g_1 \al_1 h_1 g_2 \al_2 h_2 g_3 \al_3 h_3)$ then $A_1$ is replaced by one antiface $(A_1'/f) \mw (A_3'/f) \mw (A_2'/f)$.  See Figure \ref{i3c}.
\end{proof}

In case (c) of Lemma \ref{ident3vert}, when $|\cA|=1$, we can also construct an embedding $\Phi'$ where the single antiface in $\cA$ is replaced by three new antifaces. However, our goal is to minimize the number of antifaces, so we will not use this.

As we show below, situations where we identify an odd number of vertices into a single vertex behave nicely.  Therefore, following \skoviera. and Nedela \cite{SN90}, we say a vertex identification $f$ from $D$ to $F$ is an \emph{odd vertex identification with $\ell$ exceptional vertices} if there are exactly $\ell$ vertices $w \in V(F)$ for which $|f\iv(w)|$ is even, where $\ell \ge 0$.

\begin{corollary}
\label{identvert}
Let $\Phi$ be an orientable directed embedding of an eulerian digraph $D$, and suppose $f$ is an odd vertex identification from $D$ to $D/f$ with $\ell$ exceptional vertices.
Let $\cA$ be a set of antifaces such that for each $w \in V(D/f)$ with $|f\iv(w)| \ge 2$, each element of $f\iv(w)$ is incident with an element of $\cA$.
Then $D/f$ has an orientable directed embedding $\Phi'$ where $\cA$ is replaced by a set $\cA'$ of antifaces, such that $|\cA'| \le |\cA|+\ell$.
The set $\cA'$ uses the same set of half-arcs as $\cA$, and $\Phi'$ has a face $B/f$ for every face $B \notin \cA$ of $\Phi$.
\end{corollary}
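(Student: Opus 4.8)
The plan is to realize the identification $f$ as a composition of elementary identifications, each merging two or three vertices at once, and to apply Lemma \ref{ident2vert} or Lemma \ref{ident3vert} at each step. At a target vertex $w \in V(D/f)$ we must collapse the $k := |f\iv(w)|$ vertices of $f\iv(w)$ into one. The governing observation is a parity bookkeeping: a three-vertex merge (Lemma \ref{ident3vert}) never increases the number of antifaces, whereas a two-vertex merge (Lemma \ref{ident2vert}) may increase it by one. Since a three-vertex merge reduces the number of vertices still to be identified in a fiber by two, an odd fiber can be collapsed to a single vertex using three-vertex merges alone, at no cost, while an even fiber needs exactly one two-vertex merge---costing at most one extra antiface---to reach odd parity, after which three-vertex merges finish the job. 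The even fibers are precisely the $\ell$ exceptional vertices, so the total cost is at most $\ell$.

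Concretely, I would process the target vertices with $k \ge 2$ one at a time, maintaining a running orientable directed embedding together with a running antiface set $\cA^*$ (initially $\cA$) that plays the role of $\cA$ in the lemmas. At a fiber with $k$ odd and $k \ge 3$ I first merge three of its vertices by Lemma \ref{ident3vert}, then repeatedly merge the resulting vertex with two further vertices of the fiber, each time by Lemma \ref{ident3vert}, until one vertex remains; this uses $(k-1)/2$ steps. At a fiber with $k$ even I first merge two of its vertices by Lemma \ref{ident2vert} and then finish with three-vertex merges exactly as in the odd case (when $k=2$ no further steps are needed). At every step the antifaces the lemma requires are available: each vertex of a nontrivial fiber is incident with an element of $\cA$ by hypothesis, and---crucially---each lemma guarantees that its replacement antifaces use exactly the same set of half-arcs as the ones they replace. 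Hence ``incident with an element of $\cA^*$'' is an invariant of every vertex that began incident with an element of $\cA$ (in particular of the intermediate merged vertices), so the hypothesis needed for the next application survives even when one antiface straddles several fibers.

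The counting is then immediate. Updating $\cA^*$ at each step replaces a subset of it (the local antifaces) by new antifaces spanning the same half-arcs, and $|\cA^*|$ changes by at most $+1$ at a two-vertex merge and by at most $0$ at a three-vertex merge. Summing over fibers, the only increases come from the single two-vertex merge performed at each of the $\ell$ even fibers, so $|\cA^*|$ grows by at most $\ell$ and the final set $\cA' := \cA^*$ satisfies $|\cA'| \le |\cA| + \ell$. Orientability and directedness persist at each step by the lemmas, and any face $B \notin \cA$ is never selected into a local set (local sets consist only of descendants of antifaces of $\cA$), so it descends through the composed identifications to the face $B/f$ of $\Phi'$.

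The main obstacle I expect is not the parity count, which is transparent, but the bookkeeping across fibers: verifying that after modifying antifaces at one target vertex the incidence hypothesis still holds at the remaining ones, and that no face outside $\cA$ is disturbed. Both reduce to the half-arc-preservation clauses of Lemmas \ref{ident2vert} and \ref{ident3vert}, so the real work is to set up the running set $\cA^*$ and state its invariant carefully rather than to carry out any new geometric argument.
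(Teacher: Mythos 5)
Your proposal is correct and follows essentially the same route as the paper's own proof: process the fibers one at a time, collapsing odd fibers by $3$-vertex identifications (Lemma \ref{ident3vert}, no increase in antifaces) and each even fiber by one $2$-vertex identification (Lemma \ref{ident2vert}, increase at most one) plus $3$-vertex identifications, giving the total bound $\ell$. Your explicit running set $\cA^*$ and its half-arc-preservation invariant merely spell out bookkeeping the paper leaves implicit, so no substantive difference.
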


\begin{proof}
Consider the vertices of $D/f$ one at a time.  Suppose $w \in V(D/f)$ and $|f\iv(w)| = k$. If $k=1$ we do nothing (except relabel the unique vertex of $f\iv(w)$ as $w$).

If $k=2t+1 \ge 3$ is odd then we can identify $f\iv(w)$ into the single vertex $w$ by a sequence of $t$ $3$-vertex identifications.  Applying Lemma \ref{ident3vert} we can construct a corresponding sequence of orientable directed embeddings, where the number of antifaces derived from $\cA$ does not increase at each step.

If $k=2t+2$ is even then in addition to $t$ $3$-vertex identifications as above, we must also use one $2$-vertex identification and apply Lemma \ref{ident2vert}, which adds at most $1$ to the number of antifaces derived from $\cA$.

Processing each vertex of $D/f$ in turn, the overall number of antifaces derived from $\cA$ increases by at most $\ell$.
\end{proof}

\subsection{A sufficient condition and a sharp bound}

We now give our main results in the orientable setting.  Theorem \ref{2mod4main} shows that a sufficient condition for an eulerian digraph to have a (necessarily orientable) bi-eulerian directed embedding is that all vertices have degree $2 \mmod 4$.  Furthermore, one of the euler circuits bounding a face may be specified in advance.  Theorem \ref{2mod4main} is a special case of the more general result of Theorem \ref{maxgenus0mod4} that also addresses vertices of degree $0 \mmod 4$, giving a sharp bound on maximum orientable directed genus.

Our proof relies on Corollary \ref{identvert}, which is based on Lemmas \ref{ident2vert} and \ref{ident3vert}, which specify explicit procedures for modifying an embedding.  Thus, implicit in our proofs are polynomial time algorithms for producing the desired embeddings.

We prove our results, here and in the next subsection, in the digraph setting.  But since every eulerian graph can be oriented to give an eulerian digraph, immediate corollaries for undirected graphs follow.

\begin{theorem}
\label{maxgenus0mod4}
Let $D$ be an $m$-edge $n$-vertex eulerian digraph in which exactly $\ell$ of the vertices have degree $0$ mod $4$.  Let $T$ be a given directed euler circuit in $D$.  Then there exists an orientable directed embedding of $D$ with one proface bounded by $T$ and at most $\ell+1$ antifaces.
Therefore, the maximum orientable directed genus of $D$ is at least $(m-n-\ell)/2$.
\end{theorem}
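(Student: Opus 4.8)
The plan is to build the required embedding backwards: start from a trivially embedded directed cycle whose two faces both trace out $T$, and then recover $D$ by a single vertex identification to which Corollary \ref{identvert} applies. The key preliminary observation is that the directed euler circuit $T$ passes through each vertex $v$ exactly $\frac12\deg(v)$ times, since each passage uses up one incoming and one outgoing half-arc. First I would \emph{split} $D$ along $T$: replace each vertex $v$ by $\frac12\deg(v)$ copies, one for each visit of $T$, giving each copy the incoming and outgoing half-arc used at that visit. Writing $D_0$ for the resulting digraph and $f\colon V(D_0)\to V(D)$ for the map sending each copy back to $v$, we have $D=D_0/f$. Because $T$ is a single closed trail using all $m$ arcs, its arcs in $D_0$ form one directed cycle $C$ on $m$ vertices (so $D_0$ is connected and eulerian), and $T$ itself is the image $C/f$.

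Next I would equip $D_0$ with its standard spherical embedding. Since every vertex of $D_0$ has exactly one entering and one leaving half-arc, this is automatically a directed embedding by Observation \ref{diremb-basic-s}, and it is orientable; by Lemma \ref{diremb-ori-2fc} its two faces split into one proface $P$ and one antiface $A$. Each face is bounded by a directed closed walk using every arc once, i.e., by the full directed cycle, so both $P/f$ and $A/f$ equal $T$. Now I would apply Corollary \ref{identvert} to $f$, taking $\cA=\{A\}$. Since $A$ is bounded by the entire cycle it is incident with every vertex of $D_0$, so the incidence hypothesis of the corollary is satisfied for free. The corollary then produces an orientable directed embedding $\Phi'$ of $D=D_0/f$ in which $A$ is replaced by a family $\cA'$ of antifaces with $|\cA'|\le|\cA|+\ell=\ell+1$, while the proface survives as $P/f$, a proface bounded by $T$. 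This is exactly an embedding with one proface bounded by $T$ and at most $\ell+1$ antifaces.

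The one hypothesis needing genuine verification — and the point at which the degree condition enters — is that $f$ is an \emph{odd} vertex identification with exactly $\ell$ exceptional vertices. Here $|f\iv(w)|=\frac12\deg(w)$ is the number of visits of $T$ to $w$, which is even precisely when $\deg(w)\equiv 0\pmod 4$; by hypothesis there are exactly $\ell$ such vertices, so $f$ has $\ell$ exceptional vertices and the bound $|\cA'|\le|\cA|+\ell$ is exactly the one Corollary \ref{identvert} provides. I expect this bookkeeping, together with the two easy sanity checks that both facial walks of $C$ really map to $T$ and that the identification preserves (rather than reverses) the proface, to be the only delicate points; everything else is immediate from the cited results. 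There is no serious obstacle, reflecting the fact that the substantive work has been pushed into Corollary \ref{identvert} and the lemmas behind it.

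Finally, the genus bound is one line of Euler's formula. The embedding $\Phi'$ has $|F|=1+|\cA'|\le\ell+2$ faces, so from $n-m+|F|=2-2g$ we obtain $2g=m-n+2-|F|\ge m-n-\ell$, whence the maximum orientable directed genus of $D$ is at least $(m-n-\ell)/2$.
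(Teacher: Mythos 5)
Your proposal is correct and follows essentially the same route as the paper's own proof: construct a directed cycle digraph together with an odd vertex identification $f$ onto $D$ (with $|f\iv(v)|=\deg_D(v)/2$, giving exactly $\ell$ exceptional vertices), take the planar bi-eulerian directed embedding of the cycle with the cycle walk mapping to $T$, and apply Corollary \ref{identvert} with $\cA=\{A\}$, finishing with Euler's formula. Your explicit check of the incidence hypothesis of the corollary is a small point of extra care the paper leaves implicit, but the argument is the same.
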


\begin{figure}[h!]
  \centering
  \hbox to\hsize{\hfil
    \includegraphics[scale=1.1]{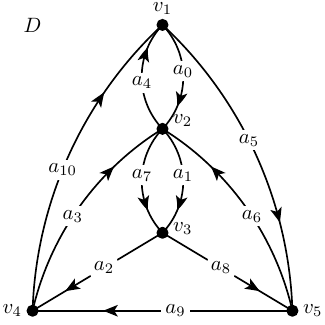}\hfil\hfil
    \raise10pt\hbox{\includegraphics[scale=1.1]{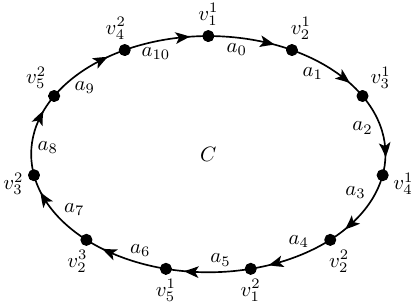}}\hfil
  }
 \caption{Digraph $D$ is the image of a vertex identification map from directed cycle $C$.} 
  \label{pullapart}
  \end{figure} 

\begin{proof}
Since $D$ is eulerian, there is a directed cycle digraph $C$ and a vertex identification map $f$ from $C$ to $D$ such that $C/f = D$ and $Z/f = T$, where $Z$ is the directed cycle walk in $C$.  The cycle $C$ has a planar bi-eulerian directed embedding $\Phi$ where $Z$ bounds the proface, and the antiface $A$ is also bounded by $Z$.  See for example Figure \ref{pullapart}, where we have a digraph $D$ with euler circuit $T$ using arcs $a_0, a_1, a_2, \dots, a_{10}$ in that order, and which can be obtained from a directed cycle digraph $C$ with a directed cycle walk $Z$ also using arcs $a_0, a_1, a_2, \dots, a_{10}$ in that order, with $f(v_i^j) = v_i \in V(D)$ for each $v_i^j \in V(C)$.

For each $v \in V(D)$, we have that $|f\iv(v)| = \deg_D(v)/2$, so $f$ is an odd vertex identification with $\ell$ exceptional vertices. 
Applying Corollary \ref{identvert} to $\Phi$ and $\cA=\{A\}$, we obtain an orientable directed embedding $\Phi'$ of $C/f = D$ with at most $\ell+1$ antifaces derived from $A$, and where $Z/f = T$ is also a face.
Thus, $\Phi'$ is the required embedding.  It has at most $\ell+2$ faces, so the bound on the maximum genus follows from Euler's formula.
\end{proof}

If all the vertices of a diagraph have degree 2 mod 4, then $\ell=0$ and $D$ has a bi-eulerian embedding, as follows.

\begin{theorem}\label{2mod4main}
Let $D$ be an eulerian digraph with all vertices of degree congruent to $2$ mod $4$, and let $T$ be any directed euler circuit of $D$.  Then $D$ has a (necessarily orientable) bi-eulerian directed embedding with one of the faces bounded by $T$.
\end{theorem}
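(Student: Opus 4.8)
The plan is to derive this as an immediate corollary of Theorem \ref{maxgenus0mod4}. Since every vertex of $D$ has degree congruent to $2 \mmod 4$, the number $\ell$ of vertices of degree $0 \mmod 4$ is zero. Applying Theorem \ref{maxgenus0mod4} to $D$ and the given directed euler circuit $T$ therefore yields an orientable directed embedding $\Phi'$ of $D$ with one proface bounded by $T$ and at most $\ell + 1 = 1$ antiface.

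It then remains only to check that $\Phi'$ is bi-eulerian, that is, that it has exactly two faces, each bounded by a directed euler circuit. In an orientable directed embedding the faces partition into profaces and antifaces, and (as in the argument for Lemma \ref{diremb-ori-2fc}, where reversing one color class uses every edge once in each direction) each arc of $D$ lies on the boundary of exactly one proface and exactly one antiface. Since $D$ is nontrivial it has an arc, so $\Phi'$ has at least one antiface; combined with the bound above, $\Phi'$ has exactly one antiface. Because every arc of $D$ lies on exactly one antiface and there is only one such face, that antiface must use every arc of $D$, and hence is bounded by a directed euler circuit.

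Thus $\Phi'$ has exactly two faces, one proface bounded by the euler circuit $T$ and one antiface bounded by an euler circuit, so by Definition \ref{bi-eul} it is a bi-eulerian directed embedding with one face bounded by $T$, as required. I expect no genuine obstacle: essentially all the content is carried by Theorem \ref{maxgenus0mod4}, and the only point requiring a moment's thought is confirming that the single antiface is necessarily an euler circuit, which follows directly from the proface/antiface structure of orientable directed embeddings established via Lemma \ref{diremb-ori-2fc}.
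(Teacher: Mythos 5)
Your proof is correct and takes essentially the same approach as the paper: the paper derives Theorem \ref{2mod4main} precisely as the $\ell=0$ specialization of Theorem \ref{maxgenus0mod4}, stating it without further argument. Your added verification that the single antiface must be bounded by a directed euler circuit (via the fact that each arc lies on exactly one proface and one antiface, as in Lemma \ref{diremb-ori-2fc}) is exactly the step the paper leaves implicit.
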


\begin{example}
Sharpness examples for Theorem \ref{maxgenus0mod4} are provided by chains of directed digons.  Let $\DP_\ell$ have  vertices $v_0, v_1, v_2, \dots, v_{\ell+1}$,
with one arc from $v_i$ to $v_{i+1}$ and one arc from $v_{i+1}$ to $v_i$
for each $i \in \set{0, 1, 2, \ldots, \ell}$.  There are $\ell$ vertices of degree $0$ mod $4$.  There is only one directed euler circuit in $\DP_\ell$, and the only way to complete this to an orientable directed embedding is to use all $\ell+1$ directed digons in $\DP_\ell$ as faces.

In fact every orientable directed embedding of $\DP_\ell$ has exactly $\ell+2$ faces, so the maximum (and minimum) genus of orientable directed embeddings of $\DP_\ell$ is $(m-n-\ell)/2 = 0$ (all are planar).  This can be shown using an analysis similar to that of Example \ref{cycledigons}.
\end{example}

The digraphs $\DP_\ell$ have many bad $2$-edge-cuts.  However, we cannot improve the bound of Theorem \ref{maxgenus0mod4} by more than $2$ by prohibiting bad $2$-edge-cuts.
The digraphs $\ddc_\ell$ of Example \ref{cycledigons} have an underlying graph that is $4$-edge-connected, and $\ell$ vertices of degree $0$ mod $4$.  The analysis from that example shows that every orientable directed embedding of $\ddc_\ell$ with an euler circuit face has exactly $\ell-1$ other faces, and in fact every orientable directed embedding has at least $\ell$ faces.  Subsection \ref{ssec:two0mod4} below addresses the $4$-edge-connected situation when there are exactly two vertices of degree $0$ mod $4$.

We can prove Theorems \ref{maxgenus0mod4} and \ref{2mod4main}, and also Theorem \ref{2badok} below, using alternative arguments.  Our first proofs used an approach inspired by Fleischner's techniques \cite[Section VI.2]{Fle90} for constructing compatible euler circuits by specifying transition graphs built from $1$-factors (perfect matchings).
We can also give proofs using results of Bonnington \cite{Bon94} on maximum genus relative embeddings for undirected graphs.  The proofs given here, based on the idea of odd vertex identifications from \cite{Glu77, SN90, Yav73}, are the simplest ones we have found.

We can now apply the digraph results to graphs.  Given an euler circuit $T$ in an undirected eulerian graph $G$, we can orient the edges of $G$ to follow $T$, giving a digraph $D$.  Applying Theorem \ref{maxgenus0mod4} to $D$ then yields the following.

\begin{corollary}\label{mod4undir}
Let $G$ be an eulerian graph, and let $T$ be any euler circuit of $D$. Suppose $D$ has exactly $\ell$ vertices of degree $0$ mod $4$.  Then $G$ has an orientable embedding with a face bounded by $T$ and at most $\ell+1$ other faces.
When all vertices have degree $2$ mod $4$, this is a bi-eulerian embedding.
\end{corollary}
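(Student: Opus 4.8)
The plan is to transfer Theorem \ref{maxgenus0mod4} from digraphs to undirected graphs via the orientation of $G$ induced by the given euler circuit $T$. First I would orient each edge of $G$ in the direction in which $T$ traverses it, producing a digraph $D$ whose underlying graph is $G$. Because $T$ passes through each vertex $v$ exactly $\deg_G(v)/2$ times, entering along one edge and leaving along another at each passage, every vertex of $D$ acquires indegree equal to outdegree equal to $\deg_G(v)/2$. Hence $D$ is eulerian, $T$ becomes a directed euler circuit of $D$, and the total degree of each vertex --- and in particular its residue modulo $4$ --- is unchanged. So $D$ has exactly the same $\ell$ vertices of degree $0$ mod $4$ as $G$.

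Next I would apply Theorem \ref{maxgenus0mod4} directly to $D$ and the directed euler circuit $T$. This yields an orientable directed embedding $\Phi'$ of $D$ with one proface bounded by $T$ and at most $\ell+1$ antifaces. Forgetting the arc directions turns $\Phi'$ into an orientable embedding of the underlying graph $G$, with exactly the same faces; so one of them is bounded by $T$, now regarded as an undirected euler circuit, and there are at most $\ell+1$ others. This establishes the first assertion.

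For the final sentence I would specialize to the case $\ell=0$, so that $\Phi'$ has one proface and at most one antiface. The only subtlety, and the single point requiring care, is to confirm that the second face is genuinely bounded by an euler circuit rather than merely being a face of an embedding with few faces. This follows because an orientable directed embedding is $2$-face-colorable by Lemma \ref{diremb-ori-2fc}, so each arc borders exactly one proface and exactly one antiface; with only one antiface available, that antiface must contain every arc and is therefore a directed euler circuit. In particular at least one antiface exists, so $\Phi'$ has exactly two faces. Reading $\Phi'$ as an embedding of $G$, both faces are then bounded by (undirected) euler circuits, and the embedding is bi-eulerian. Since the whole argument is a one-step reduction to an already-proved theorem, I expect no genuine obstacle; the only things to watch are the degree-preservation check under the orientation and the verification that the lone antiface is indeed eulerian.
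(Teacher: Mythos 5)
Your proof is correct and takes essentially the same route as the paper: orient the edges of $G$ in the direction $T$ traverses them to obtain an eulerian digraph $D$ with the same vertex degrees, apply Theorem \ref{maxgenus0mod4} to $D$ and $T$, and read the resulting orientable directed embedding as an orientable embedding of $G$. Your closing check that, when $\ell=0$, the lone antiface must use every arc (since each arc borders exactly one proface and one antiface) and is therefore a directed euler circuit is a correct filling-in of a detail the paper leaves implicit in passing from Theorem \ref{maxgenus0mod4} to Theorem \ref{2mod4main}.
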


\subsection{Bi-eulerian embeddings with two vertices of degree $0$ mod $4$}
\label{ssec:two0mod4}

Even when there are vertices of degree $0$ mod $4$,  bi-eulerian embeddings may still exist.
Let $\dip_4$ be the eulerian digraph with two vertices $u, v$, and with two arcs $a_1, a_2$ from $u$ to $v$ and two arcs $b_1, b_2$ from $v$ to $u$.
Then $\dip_4$ has two vertices of degree $0$ mod $4$, but still has a bi-eulerian embedding.  In fact, the bi-eulerian embedding of $\dip_4$ can be used to find bi-eulerian embeddings in other digraphs with two vertices of degree $0$ mod $4$.

A key concept here is interlacing: a circuit \emph{interlaces} two vertices $x$ and $y$ if it can be written as $(\dots x \dots y \dots x \dots y \dots)$.

\begin{theorem} \label{2badok}
Let $D$ be an eulerian digraph with all but two vertices of degree
congruent to $2$ mod $4$, and let $T$ be a directed euler circuit of $D$ that interlaces the two vertices of degree $0$ mod $4$. Then $D$ has a (necessarily orientable) bi-eulerian directed embedding with one of the faces bounded by $T$. 
\end{theorem}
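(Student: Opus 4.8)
The plan is to reuse the machinery behind Theorem~\ref{maxgenus0mod4}. Write $D = C/f$, where $C$ is a directed cycle carrying its planar bi-eulerian embedding with proface $Z$ and antiface $A$ (both bounded by the cyclic walk $Z$, with $Z/f = T$), and build an orientable directed embedding of $D$ by a sequence of vertex identifications using Lemmas~\ref{ident2vert} and~\ref{ident3vert}. Since $Z \notin \{A\}$, the face $Z/f = T$ survives every step (Corollary~\ref{identvert}), so $T$ is automatically the proface of the final embedding. Moreover the surviving antifaces always use exactly the half-arcs of $A$, i.e.\ all of them. Hence it suffices to control the \emph{antifaces} and arrange that precisely one antiface remains at the end; that antiface is then an euler circuit and the embedding is bi-eulerian.

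The heart of the argument is a parity count of the forced $2$-vertex identifications. For each $w$ we have $|f\iv(w)| = \deg_D(w)/2$, which is odd exactly when $\deg_D(w) \equiv 2$ and even exactly when $\deg_D(w) \equiv 0 \pmod 4$; thus $u$ and $v$ are precisely the two vertices whose preimages cannot be amalgamated by $3$-vertex (odd) identifications alone, each requiring exactly one $2$-vertex step. By Lemma~\ref{ident3vert} every $3$-vertex step can be performed with the antiface count unchanged (case~(c)), whereas by Lemma~\ref{ident2vert} a $2$-vertex step either \emph{splits} one antiface into two (case~(b), $+1$) or \emph{merges} two into one (case~(a), $-1$). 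Starting from the single antiface $A$, a single remaining antiface is obtained iff exactly one of the two forced steps is a split and the other a merge, so that their effects cancel.

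I would therefore proceed as follows. First, using only count-preserving $3$-vertex identifications, fully amalgamate every vertex of degree $2\bmod 4$ and reduce the preimages of $u$ and of $v$ down to exactly two each; this is possible precisely because $|f\iv(u)|$ and $|f\iv(v)|$ are even, so they step down $2t \to 2t-2 \to \dots \to 2$. Throughout this stage there is a single antiface, whose boundary is a reversal of $T$ and hence still interlaces $u$ and $v$. I would carry out the reductions so that, along this antiface, the two surviving $u$-preimages and the two surviving $v$-preimages \emph{alternate}, i.e.\ each of the two arcs between the $u$-preimages contains one $v$-preimage. I would then identify the two $u$-preimages: by Lemma~\ref{ident2vert}(b) this splits the antiface into two, one containing each $v$-preimage. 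Finally I would identify the two $v$-preimages; since these now lie on \emph{distinct} antifaces, this is the merging case Lemma~\ref{ident2vert}(a), returning to a single antiface. The result has proface $T$ and one antiface, as required. (When $\deg_D(u)=\deg_D(v)=4$ no reduction is needed and the alternation is immediate from the interlacing of $T$; this base case is modelled on the bi-eulerian embedding of $\dip_4$.)

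The main obstacle is the alternation requirement in the reduction stage. A count-preserving $3$-vertex identification does more than amalgamate: by the segment swap in Lemma~\ref{ident3vert}(c) it cyclically reorders the arcs of the antiface, and such a swap can move $u$- and $v$-preimages past one another. The task is to choose the grouping and order of these identifications so that the two final $u$-preimages still separate the two final $v$-preimages along the single antiface. Interlacing of $T$ is exactly the hypothesis that prevents all $v$-visits from lying within one arc cut out by the $u$-visits (equivalently, it is what makes a merge at $v$ available after the split at $u$); I expect verifying that this separation can be funneled down to one pair on each side, while keeping a single antiface, to be the only genuinely technical step, and the place where the interlacing assumption is indispensable.
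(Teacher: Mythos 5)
Your proposal sets up exactly the right machinery (the cycle decomposition $D = C/f$, Lemmas \ref{ident2vert} and \ref{ident3vert}, Corollary \ref{identvert}, and the parity count showing that exactly two $2$-vertex identifications are forced, of which one must split an antiface and the other merge two), but it has a genuine gap at the step you yourself flag: you never prove that the $3$-vertex identifications can be scheduled so that, when only two preimages of $u$ and two of $v$ remain, they still alternate around the single antiface. This is not a routine verification. Each application of Lemma \ref{ident3vert}(c) replaces the antiface boundary $A_1'\mw A_2'\mw A_3'$ by $A_1'\mw A_3'\mw A_2'$, a segment swap that can move $u$-occurrences past $v$-occurrences; moreover, after partial amalgamation the surviving ``preimages'' are merged vertices visited many times by the antiface, so even stating the alternation invariant precisely requires care. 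As written, the interlacing hypothesis is only exploited in the base case $\deg(u)=\deg(v)=4$, and the general case is left as an expectation rather than an argument, so the proof is incomplete.

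The paper closes exactly this gap by reversing the order of operations: it performs the two forced $2$-vertex identifications \emph{first}, on $C$ itself, where interlacing is directly usable. Since $T$ interlaces $x$ and $y$, one can choose $x_1, x_2 \in f\iv(x)$ and $y_1, y_2 \in f\iv(y)$ occurring along $Z$ in the order $x_1, y_1, x_2, y_2$; identifying $x_1$ with $x_2$ and $y_1$ with $y_2$ yields a subdivision $C'$ of $\dip_4$, which has an explicit bi-eulerian directed embedding with $Z/f_1$ as proface and a single antiface $A$. The remaining identification $f_2$ with $f = f_2 \circ f_1$ is then an odd vertex identification with \emph{no} exceptional vertices (the preimages of $x$ and $y$ under $f_2$ have odd size $\deg(x)/2 - 1$ and $\deg(y)/2 - 1$, and all other preimages have odd size $\deg(w)/2$), so Corollary \ref{identvert} applies verbatim and the single antiface never multiplies; no alternation invariant must be carried through the odd identifications at all. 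To salvage your order of operations you would have to supply the missing combinatorial argument; the simpler fix is to front-load the two even identifications as the paper does.
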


\begin{proof}
Since $D$ is eulerian, there is a directed cycle digraph $C$ and a vertex identification map $f$ from $C$ to $D$ such that $C/f = D$ and $Z/f = T$, where $Z$ is the directed cycle walk in $C$.  

Suppose $x$ and $y$ are interlaced on $T$.  Then there are $x_1, x_2 \in f\iv(x)$ and $y_1, y_2 \in f\iv(y)$ such that $x_1, y_1, x_2, y_2$ occur along $Z$ in that order.
Let $f_1$ be a vertex identification of $C$ that identifies $x_1$ and $x_2$ into a vertex $x'$, and $y_1$ and $y_2$ into a vertex $y'$.  Let $C' = C/f_1$ and $Z' = Z/f_1$.
Then $C'$ is isomorphic to a subdivision of $\dip_4$, with $Z'$ corresponding to an euler circuit $X_1$ of $\dip_4$ which we may assume is represented by the vertex/arc sequence $(u a_1 v b_1 u a_2 v b_2)$.  Now $\dip_4$ has an orientable bi-eulerian directed embedding with $X_1$ as the proface and $X_2 = (u a_1 v b_2 u a_2 v b_1)$ as the antiface.  Thus, $C'$ has a corresponding bi-eulerian embedding $\Phi$ with $Z'$ as the proface and with one antiface $A$ corresponding to $X_2$.

There is a unique vertex identification $f_2$ from $C'$ to $D$ such that $f = f_2 \circ f_1$, and $f_2$ is an odd vertex identification with no exceptional vertices.  Applying Corollary \ref{identvert} to $\Phi$, $C'$, $f_2$ and $\cA=\{A\}$, we obtain an orientable directed embedding $\Phi'$ of $C'/f_2 = D$ with one antiface derived from $A$, and where $Z'/f_2 = T$ is also a face.  This is the required embedding.
\end{proof}

Theorem \ref{2badok} is sharp in two ways.
First, Example \ref{TwoBadUnlaced} below shows that even if there are only two vertices of degree $0$ mod $4$ in an eulerian digraph, specifying an euler circuit $T$ that does not interlace them can preclude a bi-eulerian embedding with $T$ as one of the faces.  Second, the digraphs $\ddc_n$ of Example \ref{cycledigons} (for even $n \ge 4$) show that if an eulerian digraph has a number of vertices of degree $0$ mod $4$ that is even and more than two, then there may be no bi-eulerian embedding, regardless of the starting euler circuit $T$, even if for every pair of vertices there exists a directed euler circuit that interlaces them,

\begin{example}\label{TwoBadUnlaced} Figure \ref{UnlacedBadVerts} shows a digraph with two vertices $u, v$ of degree 0 mod 4, and an euler circuit $T$,  shown in red, that does not interlace them.   For any circuit(s) completing $T$ to a directed embedding, the transitions shown in blue are forced, and they prevent a circuit from going between the left and right sides of the digraph. Thus, any directed embedding with $T$ bounding one of its faces must have at least 3 faces and hence is not bi-eulerian.

\begin{figure}[h!]
 \centering
 \includegraphics[scale=1.2]{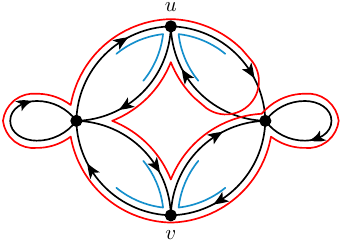}
\caption{A digraph with an euler circuit (red), that does not interlace two vertices of degree 0 mod 4, and forced transitions (blue). } 
 \label{UnlacedBadVerts}
\end{figure} 

\end{example}

The following lemmas show that provided two vertices are `locally' 4-edge connected, then it is always possible to find an euler circuit that interlaces them.  We denote the underlying undirected graph of a digraph $D$ by $\ug{D}$.
The proof of our first lemma, Lemma \ref{kpaths}, generalizes the proof of \cite[Lemma 10.7.8]{BJG09}. 

\begin{lemma} \label{kpaths}
Let $D$ be an eulerian digraph, and $s,t \in V(D)$ with $s \ne t$.  Suppose that every edge cut of $\ug{D}$ separating $s$ and $t$ has at least $2k$ edges, where $k \ge 1$.  Then $D$ has arc-disjoint directed paths $P_1, P_2, \dots, P_k, Q_1, Q_2, \dots, Q_k$, where each $P_i$ is a directed $st$-path and each $Q_i$ is a directed $ts$-path.  
\end{lemma}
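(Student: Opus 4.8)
The plan is to build the two families of paths in two stages: use the eulerian hypothesis to turn the undirected cut condition into lower bounds on \emph{directed} cuts, invoke the arc-disjoint version of Menger's theorem to extract $k$ directed $st$-paths, and then force the reverse paths to be disjoint from them by working in the residual digraph obtained after deleting the first family.

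First I would translate the hypothesis into the directed setting. For any $U \subseteq V(D)$ with $s \in U$ and $t \notin U$, let $d^+(U)$ and $d^-(U)$ denote the numbers of arcs of $D$ leaving and entering $U$. Since $D$ is eulerian every vertex has equal in- and out-degree, so summing over $U$ gives $d^+(U) = d^-(U)$. Their sum is the size of the underlying edge cut $\delta_{\ug{D}}(U)$, which separates $s$ and $t$ and hence has at least $2k$ edges; therefore $d^+(U) \ge k$. Thus every directed $s$-$t$ cut has at least $k$ arcs, and by the arc-disjoint form of Menger's theorem there exist $k$ arc-disjoint directed $st$-paths $P_1, \dots, P_k$.

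The crux is obtaining the reverse paths disjointly from the $P_i$. Let $D'$ be $D$ with the arcs of $P_1, \dots, P_k$ deleted. A degree count shows $D'$ is balanced at every vertex except that at $s$ the indegree exceeds the outdegree by $k$, and at $t$ the outdegree exceeds the indegree by $k$: each $P_i$ removes exactly one out-arc at $s$, one in-arc at $t$, and a matched in/out pair at each of its internal vertices. Now for any $U$ with $t \in U$ and $s \notin U$, summing $(\text{outdegree} - \text{indegree})$ over the vertices of $U$ in $D'$ yields exactly $k$, since only $t$ contributes; this sum also equals $d^+_{D'}(U) - d^-_{D'}(U)$, so $d^+_{D'}(U) \ge k$. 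Hence every directed $t$-$s$ cut of $D'$ has at least $k$ arcs, and Menger's theorem (which applies to the arbitrary digraph $D'$, even though it need not be eulerian or connected) yields $k$ arc-disjoint directed $ts$-paths $Q_1, \dots, Q_k$ in $D'$. Because the $Q_i$ live in $D'$, they use no arc of any $P_i$, so all $2k$ paths are jointly arc-disjoint.

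I expect the only delicate point to be this second stage, namely guaranteeing that the $ts$-paths survive without reusing arcs of the $st$-paths. The resolution is that deleting the first family converts the eulerian balance into a controlled imbalance, a $k$-fold out-excess at $t$ and in-excess at $s$, and this imbalance \emph{alone} forces every $t$-$s$ cut of the residual digraph to have at least $k$ arcs. In particular the connectivity hypothesis is used only once, to launch the first family, which is the feature that lets the argument generalize the proof of \cite[Lemma 10.7.8]{BJG09}.
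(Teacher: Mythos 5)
Your proof is correct, and its first stage --- converting the undirected cut bound into a directed one via the eulerian balance condition and invoking Menger's theorem to obtain $P_1, \dots, P_k$ --- is exactly the paper's. The two arguments also share the crucial structural observation about the residual digraph $D' = D - \bigcup_{i=1}^k A(P_i)$: it is balanced at every vertex except for an excess of $k$ in-arcs at $s$ and $k$ out-arcs at $t$. Where you diverge is in how this imbalance is converted into the second family of paths. The paper views the all-ones function on the arcs of $D'$ as a flow of value $k$ from $t$ to $s$ and cites Gallai's Flow Decomposition Theorem: the flow splits into unit flows around directed cycles plus unit flows along exactly $k$ directed $ts$-paths, which are automatically arc-disjoint because every arc carries a single unit. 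You instead run a cut-counting argument (summing outdegree minus indegree of $D'$ over any $U$ with $t \in U$, $s \notin U$ gives $d^+_{D'}(U) \ge k$) and apply Menger's theorem a second time to $D'$. Both routes are sound; the paper's makes the second stage a one-line citation with no further cut analysis, while yours is self-contained modulo a single classical theorem (Menger, used twice) and makes explicit the point you highlight at the end --- that the connectivity hypothesis is consumed entirely by the first family, the second being forced by the imbalance alone --- which the flow-decomposition phrasing leaves implicit.
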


\begin{proof} Since $D$ is eulerian, for every edge cut of $\ug{D}$ half of the corresponding arcs go in each direction in $D$.  Therefore the minimum size of an $st$-cut in $D$ is at least $k$, and by Menger's Theorem $D$ has $k$ arc-disjoint $st$-paths $P_1, P_2, \dots, P_k$.  Now consider a unit flow $f$ in $D - \bigcup_{i=1}^k A(P_i)$.  This flow is balanced at all vertices, except that it has net inflow of $k$ at $s$ and net outflow of $k$ at $t$.  By Gallai's Flow Decomposition Theorem (see \cite[Theorem 4.3.1]{BJG09} or \cite[Theorem 11.1]{Sch03a}), $f$ can be decomposed into unit flows around some directed cycles and along $k$ arc-disjoint directed $ts$-paths $Q_1, Q_2, \dots, Q_k$.
\end{proof}

\begin{lemma}[{\cite[Lemma 10.7.9]{BJG09}}]\label{makecirc}
Let $D$ be an eulerian digraph, and suppose there is a directed trail visiting (not necessarily distinct) vertices $v_1, v_2, \dots, v_k$ in that order.  Then there is a directed euler circuit visiting $v_1, v_2, \dots, v_k$ in that order.
\end{lemma}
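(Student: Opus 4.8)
The plan is to start from the given trail and enlarge it, one closed sub-trail at a time, until it becomes an euler circuit, taking care that every enlargement preserves the relative order in which $v_1, v_2, \dots, v_k$ are visited. The observation that makes order-preservation automatic is that every enlargement splices a \emph{closed} directed trail into the current trail at a single vertex: inserting a closed walk at one occurrence of a vertex cannot reorder any of the other occurrences along the trail, so the prescribed occurrences of $v_1, \dots, v_k$ stay in the same cyclic order throughout. Thus the only real tasks are (i) first making the given trail closed, and (ii) showing that while unused arcs remain we can always find a suitable closed sub-trail to splice in.

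For task (i), write the given trail as $W$, running from its first vertex $a$ to its last vertex $b$. If $a = b$ then $W$ is already closed. Otherwise, consider the subdigraph $S$ consisting of the arcs of $D$ not used by $W$. Because $D$ is eulerian, every vertex has equal in- and out-degree in $D$; subtracting the contribution of $W$ shows that in $S$ every vertex is balanced except that $b$ has one more outgoing than incoming arc and $a$ has one more incoming than outgoing arc. A maximal directed trail in $S$ starting at $b$ can be continued at every vertex other than $a$ (by the local balance just described), so it must terminate at $a$; this produces an arc-disjoint directed $b$--$a$ trail $R$ inside $S$. Then $W \mw R$ is a closed directed trail that still visits $v_1, \dots, v_k$ in order, since all of the prescribed occurrences lie in its initial segment $W$.

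For task (ii), suppose we now have a closed directed trail $W'$ visiting $v_1, \dots, v_k$ in order and using only some of the arcs of $D$, and let $S$ be the nonempty set of unused arcs, which again forms a balanced subdigraph. Here I would use connectivity of $D$: if no vertex visited by $W'$ were incident with an arc of $S$, then every arc meeting $V(W')$ would lie in $W'$, forcing $V(W') = V(D)$ and hence $S = \emptyset$, a contradiction. So some vertex $u$ of $W'$ is incident with an arc of $S$; by balance $u$ has an outgoing arc in $S$, and a maximal trail in $S$ from $u$ returns to $u$, giving a nonempty closed trail $C \subseteq S$. Splicing $C$ into $W'$ at an occurrence of $u$ yields a longer closed trail that still visits $v_1, \dots, v_k$ in the same order. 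Since each step strictly increases the number of arcs used, after finitely many steps all arcs are used and we obtain the desired euler circuit. The point requiring the most care is precisely this connectivity argument guaranteeing a splice vertex on the current trail; the degree bookkeeping in task (i), and the fact that a maximal trail in a balanced digraph returns to its start, are routine.
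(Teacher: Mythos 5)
Your proof is correct, but there is nothing in the paper to compare it against: the paper does not prove this lemma, it simply quotes it from Bang-Jensen and Gutin \cite[Lemma 10.7.9]{BJG09}. What you have written is the standard Hierholzer-style extension argument, made order-preserving: close the given trail $W$ using the $\pm 1$ in/out-imbalance that $D-A(W)$ has at the two endpoints (a maximal trail in that subdigraph started at $b$ can only terminate at $a$); then, while unused arcs remain, use balance of the leftover subdigraph plus connectivity of $D$ to find a closed trail of unused arcs through some vertex of the current closed trail, and splice it in there. Your key observation — that splicing a closed detour at a single occurrence of a vertex cannot disturb the relative order of any other occurrences, so the prescribed visits to $v_1,\dots,v_k$ survive every enlargement — is exactly the point that makes the lemma stronger than mere existence of an euler circuit, and you handle it correctly. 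The two facts you flag as routine (a maximal trail in the imbalanced subdigraph ends at $a$; a maximal trail in a balanced digraph returns to its start) are indeed the usual counting arguments: each arrival at a vertex other than the forced terminus leaves an unused out-arc available. So the argument is complete and is presumably the same (or essentially the same) as the proof in the cited source; the paper itself offers only the citation.
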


\begin{lemma}\label{interlace0mod4}
Let $D$ be an eulerian digraph, and $s,t \in V(D)$ with $s \ne t$ such that every edge cut of $\ug{D}$ separating $s$ and $t$ has at least $4$ edges.  Then $D$ has an euler circuit interlacing $s$ and $t$.  
\end{lemma}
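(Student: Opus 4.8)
The plan is to chain together the two preceding lemmas. First I would apply Lemma \ref{kpaths} with $k = 2$: since every edge cut of $\ug{D}$ separating $s$ and $t$ has at least $4 = 2k$ edges, the hypothesis of that lemma holds, and we obtain arc-disjoint directed paths $P_1, P_2, Q_1, Q_2$ in which $P_1, P_2$ are directed $st$-paths and $Q_1, Q_2$ are directed $ts$-paths.

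Next I would assemble these four paths into a single closed directed trail. Since $P_1$ ends at $t$ where $Q_1$ begins, $Q_1$ ends at $s$ where $P_2$ begins, $P_2$ ends at $t$ where $Q_2$ begins, and $Q_2$ ends at $s$ where $P_1$ began, the concatenation $P_1 \mw Q_1 \mw P_2 \mw Q_2$ is a well-defined closed directed walk. Because the four paths are arc-disjoint, this walk repeats no arc and hence is a directed trail. Reading off the designated endpoints in order of traversal, the trail visits $s, t, s, t$ in that order (it may of course pass through $s$ and $t$ additional times internally, but that is harmless).

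Finally I would invoke Lemma \ref{makecirc} with $v_1 = s$, $v_2 = t$, $v_3 = s$, $v_4 = t$. That lemma explicitly permits the listed vertices to be non-distinct, so it produces a directed euler circuit of $D$ that visits $s, t, s, t$ in that cyclic order. Such an euler circuit can be written as $(\dots s \dots t \dots s \dots t \dots)$, which is exactly the condition that it interlaces $s$ and $t$, completing the proof.

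I do not anticipate a serious obstacle: the argument is essentially a bookkeeping assembly of the two lemmas. The only points needing a little care are checking that the heads and tails of the four paths match up so that the concatenation is genuinely a closed walk, and observing that it is \emph{arc}-disjointness (not vertex-disjointness) that makes the concatenation a trail; the fact that the paths may share interior vertices is irrelevant both to the trail property and to the interlacing conclusion.
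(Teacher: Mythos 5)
Your proposal is correct and follows essentially the same route as the paper's proof: apply Lemma \ref{kpaths} with $k=2$, concatenate $P_1, Q_1, P_2, Q_2$ into a closed trail visiting $s,t,s,t$, and then use Lemma \ref{makecirc} to extend it to an euler circuit that preserves this visiting order and hence interlaces $s$ and $t$. The extra care you take about arc-disjointness versus vertex-disjointness and the matching of endpoints is exactly the bookkeeping the paper leaves implicit.
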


\begin{proof}
If we follow the directed paths $P_1, Q_1, P_2, Q_2$ from Lemma \ref{kpaths} in that order, we obtain a circuit that interlaces $s$ and $t$.  Applying Lemma \ref{makecirc} extends this circuit to the desired euler circuit.
\end{proof}

Combining Theorem \ref{2badok} and Lemma \ref{interlace0mod4}, we obtain the following.

\begin{corollary}\label{4edgeconn-two0mod4}
Every eulerian digraph with exactly two vertices of degree $0 \mmod 4$ has some (necessarily orientable) bi-eulerian directed embedding, provided that every edge cut separating these two vertices has at least $4$ arcs.
\end{corollary}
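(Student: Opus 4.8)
The plan is to combine the two preceding results, Lemma~\ref{interlace0mod4} and Theorem~\ref{2badok}, essentially verbatim. Let $D$ be an eulerian digraph with exactly two vertices $s$ and $t$ of degree $0 \mmod 4$, and assume that every edge cut of $\ug{D}$ separating $s$ and $t$ has at least $4$ arcs. Since $D$ is eulerian, every vertex has indegree equal to outdegree and hence even total degree; consequently the only vertices whose degree is not $2 \mmod 4$ are $s$ and $t$, which places us exactly in the degree situation required by Theorem~\ref{2badok}.

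First I would apply Lemma~\ref{interlace0mod4} to the pair $s,t$. The connectivity hypothesis of the corollary is precisely the hypothesis of that lemma (which in turn rests on the $k=2$ instance of Lemma~\ref{kpaths}), so the lemma supplies a directed euler circuit $T$ of $D$ that interlaces $s$ and $t$.

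Next I would feed $T$ into Theorem~\ref{2badok}. Its hypotheses are now satisfied: all but the two vertices $s,t$ have degree $2 \mmod 4$, and $T$ is a directed euler circuit interlacing the two vertices of degree $0 \mmod 4$. The theorem then produces a (necessarily orientable) bi-eulerian directed embedding of $D$ with one face bounded by $T$, which is the desired conclusion.

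I do not anticipate any genuine obstacle, since both ingredients are already established; the only points to verify are two bookkeeping facts noted above, namely that ``exactly two vertices of degree $0 \mmod 4$'' in an eulerian digraph is equivalent to ``all but two vertices of degree $2 \mmod 4$,'' and that the stated edge-connectivity condition is exactly what Lemma~\ref{interlace0mod4} consumes.
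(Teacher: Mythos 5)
Your proposal is correct and is exactly the paper's own proof: the paper derives Corollary~\ref{4edgeconn-two0mod4} by combining Lemma~\ref{interlace0mod4} (to obtain a directed euler circuit interlacing the two vertices of degree $0 \mmod 4$) with Theorem~\ref{2badok}, just as you do. The two bookkeeping observations you flag are indeed the only things to check, and both hold.
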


\begin{corollary}\label{4edgeconn-two0mod4undir}
Every eulerian graph with exactly two vertices of degree $0 \mmod 4$ has some orientable bi-eulerian embedding, provided that every edge cut separating these two vertices has at least $4$ edges.
\end{corollary}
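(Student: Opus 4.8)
The plan is to deduce this undirected statement directly from its digraph counterpart, Corollary \ref{4edgeconn-two0mod4}, using the orientation-along-an-euler-circuit device already employed to obtain Corollary \ref{mod4undir}. Concretely, I would fix any euler circuit $T$ of the given eulerian graph $G$ and orient each edge of $G$ in the direction it is traversed by $T$, producing a digraph $D$ with $\ug{D} = G$. Because $T$ enters and leaves every vertex the same number of times, $D$ is balanced at each vertex and hence eulerian. Since orienting does not change total degrees, the two vertices of degree $0 \mmod 4$ of $G$ are exactly the two vertices of degree $0 \mmod 4$ of $D$. Moreover, as $\ug{D} = G$, the hypothesis that every edge cut of $G$ separating these two vertices has at least $4$ edges is precisely the hypothesis needed to apply Corollary \ref{4edgeconn-two0mod4} to $D$; here one notes that, for an eulerian digraph, a separating edge cut of $\ug{D}$ with $4$ edges corresponds to a digraph edge cut with $4$ arcs, two in each direction.

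Applying Corollary \ref{4edgeconn-two0mod4} then yields a bi-eulerian directed embedding of $D$, which by Definition \ref{bi-eul} is necessarily orientable and has two faces, each bounded by a directed euler circuit of $D$. Forgetting the arc directions, this is an orientable embedding of $G$ in which each of the two faces is bounded by an euler circuit of $G$---that is, an orientable bi-eulerian embedding of $G$, as required.

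I do not anticipate a genuine obstacle here, since the whole argument is a transfer between the digraph and undirected settings. The only content lies in checking that the degree condition and the edge-cut condition pass unchanged through the orientation along $T$, and both are immediate from the equalities $\ug{D} = G$ and $\deg_D = \deg_G$. The one point deserving a moment's care is the bookkeeping between ``edges'' of the undirected cut and ``arcs'' of the digraph cut, which is harmless precisely because the underlying graph of $D$ is literally $G$.
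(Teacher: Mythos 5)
Your proposal is correct and is exactly the derivation the paper intends: the paper states that its digraph results transfer to undirected graphs by orienting along an euler circuit, and Corollary \ref{4edgeconn-two0mod4undir} is obtained from Corollary \ref{4edgeconn-two0mod4} precisely by this transfer, with the degree and cut conditions passing through unchanged since $\ug{D}=G$. Your extra check that a $4$-edge cut of $G$ becomes a $4$-arc cut of $D$ (two arcs each way, by eulerianness) matches the paper's own definition of edge cuts in digraphs, so there is no gap.
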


\subsection{Bi-eulerian embeddings with many vertices of degree $0$ mod $4$}
\label{ssec:many0mod4}

Theorem \ref{2badok} is an example of a more general principle.  The proof of Theorem \ref{2badok} uses the fact that there is a bi-eulerian embedding of $\dip_4$, and that the given euler circuit $T$ must visit the two vertices $x, y$ of degree $0$ mod $4$ in $D$ in the same order that the two vertices of $\dip_4$ are visited by one of the euler circuits in the bi-eulerian embedding of $\dip_4$, i.e., $x$ and $y$ are interlaced by $T$. 
Instead of $\dip_4$, we can use any 4-regular digraph $H$ having a bi-eulerian embedding.  The $0$ mod $4$ vertices of $D$, instead of just being interlaced, must be visited in an order given by one of the euler circuits in the bi-eulerian embedding of $H$. 

We begin by defining a generalization of interlacement. 
 
\begin{definition} Suppose $\tau$ is a sequence of length $n$ on $m$ symbols.  We say that a sequence $\sigma$ has a subsequence $\gamma$  that is \emph{patterned by $\tau$} if $\gamma$ also has length $n$ and $m$ symbols and if there exists a bijection $g$ from the symbols of $\tau$ to the symbols of $\gamma$ so that $g(\tau) = \gamma$, where $g(\tau)$ is the sequence that results from applying $g$ to the symbol in each position of $\tau$.  We can also apply this definition to cyclic sequences.
\end{definition}

This definition generalizes interlacing in that two vertices $v,w$ are interlaced by a circuit $C$ if the cyclic sequence of vertices of $C$ has a subsequence containing only $v$'s and $w$'s that is patterned by $(xyxy)$.

\begin{theorem}\label{2nbadok}
Let $D$ be an eulerian digraph in which the set $S$ of vertices with degree $0$ mod $4$ has $|S|=2n$.  Suppose there exists a $4$-regular digraph $H$ with $2n$ vertices and a bi-eulerian directed embedding with faces bounded by two euler circuits $X_1$ and $X_2$.  Let $\xi$ be any one of the cyclic sequences of vertices on $X_1$ (or its reverse), or $X_2$ (or its reverse).

If $T$ is a directed euler circuit of $D$ whose vertex sequence has a subsequence $\gamma$ that contains each vertex of $S$ exactly twice (and no other entries) and is patterned by at least one of the choices for $\xi$, then $D$ has a (necessarily orientable) bi-eulerian directed embedding with one of the faces bounded by $T$.
\end{theorem}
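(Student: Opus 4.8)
The plan is to follow the strategy of Theorem~\ref{2badok}, but with the role of $\dip_4$ played by the given $4$-regular digraph $H$. As there, I would first fix a directed cycle digraph $C$ with directed cycle walk $Z$ together with a vertex identification $f$ satisfying $C/f = D$ and $Z/f = T$. The subsequence $\gamma$ selects, for each $s \in S$, two occurrences of $s$ along $T$; these pull back to two distinct vertices $s^{(1)}, s^{(2)} \in f\iv(s) \subseteq V(C)$. Let $f_1$ be the vertex identification of $C$ that, for each $s \in S$, identifies $s^{(1)}$ with $s^{(2)}$ into a single vertex $\bar s$ and leaves all other vertices unchanged, and set $C' = C/f_1$ and $Z' = Z/f_1$.

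The crucial step is to recognize $C'$ as a subdivision of $H$ (or of its reverse). In $C'$ each $\bar s$ has indegree and outdegree $2$, while every other vertex has indegree and outdegree $1$; hence $C'$ is a subdivision of the $4$-regular digraph $H'$ obtained by suppressing the degree-$2$ vertices, and $Z'$ contracts to a directed euler circuit $\bar Z$ of $H'$ whose cyclic vertex sequence is exactly $\gamma$. Here I would invoke the standard fact that a $4$-regular digraph carrying a marked directed euler circuit is determined up to isomorphism by the cyclic double-occurrence vertex sequence of that circuit, since its arcs are precisely the consecutive pairs of vertex-occurrences along the circuit. As $\gamma$ is patterned by one of $X_1$, $X_2$, or a reverse of one of these, the bijection witnessing the patterning is a digraph isomorphism taking $(H', \bar Z)$ to $(H, X_1)$ or $(H, X_2)$, and to $(H^{\mathrm{rev}}, \cdot)$ in the reversed cases. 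Since $H$ and $H^{\mathrm{rev}}$ both carry bi-eulerian directed embeddings (one is obtained from the other by reversing all arcs while keeping the rotation system), $H'$ has a bi-eulerian directed embedding in which $\bar Z$ bounds a face. Subdividing this embedding along the suppressed vertices gives a bi-eulerian directed embedding $\Phi$ of $C'$; after orienting so that $Z'$ bounds the proface, the other face is an antiface $A$ whose boundary is a directed euler circuit of $C'$.

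To conclude I would push $\Phi$ down to $D$ using Corollary~\ref{identvert}. There is a unique vertex identification $f_2$ with $f = f_2 \circ f_1$, and it is an odd identification with no exceptional vertices: for $s \in S$ we have $|f_2\iv(s)| = |f\iv(s)| - 1 = \deg_D(s)/2 - 1$, which is odd since $\deg_D(s) \equiv 0 \pmod 4$, whereas for every other vertex $w$ we have $|f_2\iv(w)| = \deg_D(w)/2$, again odd. Because $A$ is a directed euler circuit of $C'$ it is incident with every vertex of $C'$, so the covering hypothesis of Corollary~\ref{identvert} holds with $\cA = \set{A}$. The corollary then yields an orientable directed embedding $\Phi'$ of $C'/f_2 = D$ with $Z'/f_2 = T$ bounding a face and with at most $|\cA| + 0 = 1$ antifaces derived from $A$. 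Since every orientable directed embedding has at least one antiface, there is exactly one, so $\Phi'$ has precisely two faces and is the desired bi-eulerian directed embedding with $T$ bounding one of them.

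I expect the main obstacle to be the middle paragraph: making rigorous that the patterning condition on $\gamma$ is exactly the assertion that $C'$ subdivides $H$ up to reversal, via the correspondence between directed Gauss-type sequences and $4$-regular digraphs with a marked euler circuit, and confirming that all four admissible choices of $\xi$ deliver a digraph ($H$ or $H^{\mathrm{rev}}$) inheriting a bi-eulerian directed embedding with $\bar Z$ as a face. Once that identification is secured, the parity computation for $f_2$ and the euler-circuit covering condition for Corollary~\ref{identvert} are routine.
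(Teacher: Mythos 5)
Your proposal is correct and is precisely the argument the paper intends: the paper states that Theorem \ref{2nbadok} follows by ``direct adaptation'' of the proof of Theorem \ref{2badok}, and you have carried out exactly that adaptation, with $H$ playing the role of $\dip_4$, the patterning bijection supplying the isomorphism of $(H',\bar Z)$ with $(H,X_i)$ or its reverse via reconstruction from the double-occurrence vertex sequence, and Corollary \ref{identvert} applied to the odd identification $f_2$ with no exceptional vertices. The parity check $|f_2\iv(s)|=\deg_D(s)/2-1$ odd for $s\in S$ and the covering condition via the euler-circuit antiface $A$ are exactly the needed details.
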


The proof is a direct adaptation of the proof of Theorem \ref{2badok} and is left to the reader.
The following example is an application of Theorem \ref{2nbadok} where the patterns can be described in terms of multiple pairs of interlaced vertices.

\begin{example}
Suppose we construct a digraph $H$ by starting with $\dip_4$ and taking repeated $2$-edge-joins of our current digraph and a copy of $\dip_4$.
Then $H$ is a $4$-regular digraph, and has an orientable bi-eulerian directed embedding by Observation \ref{2ecreduction}.  The patterns $\xi$ that can be obtained from bi-eulerian directed embeddings of all such digraphs $H$ provide the following consequence of Theorem \ref{2nbadok}.

Suppose an eulerian digraph $D$ has exactly $2t$ vertices $x_1, y_1, x_2, y_2, \dots x_t, y_t$ of degree $0$ mod $4$ (and possibly other vertices of degree $2$ mod $4$), 
and we can find two occurrences $v', v''$ of each $v \in \set{x_1, y_1, \dots, x_t, y_t}$ along an euler circuit $T$ with the following properties.
\begin{enumerate}[(1),nosep]

\item For each $i$ with $1 \le i \le t$, $x_i', y_i', x_i'', y_i''$ occur in that order along $T$. (The vertices of degree $0$ mod $4$ occur in pairs interlaced by $T$.)

\item For each $i, j$ with $1 \le i < j \le t$, all of $x_i', y_i', x_i'', y_i''$ appear in the subtrail of $T$ from $y_j''$ to $x_j'$.  (Loosely, each interlaced pair is independent of previous pairs.)

\end{enumerate}
Then $D$ has an orientable bi-eulerian embedding with $T$ as a face.  We omit the details of the proof.

Figure \ref{cyclicorder} shows an example of a cyclic ordering of vertices that satisfies conditions (1) and (2), and the digraph $H$ from which the corresponding pattern comes.
\end{example}

\begin{figure}[h!]
 \centering
  \hbox to \hsize{\hfil
    \includegraphics[scale=1.2]{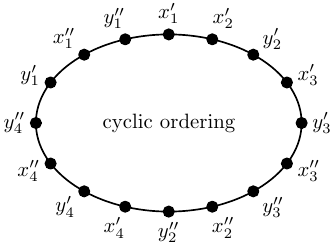}\hfil\hfil
    \raise10pt\hbox{\includegraphics[scale=1.2]{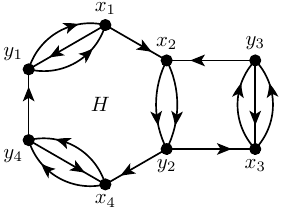}}\hfil
  }
\caption{Cyclic ordering and the digraph $H$ from which it comes.} 
 \label{cyclicorder}
\end{figure} 

\section{Orientable embeddings from circuit decompositions}\label{sec:circdec}

Kotzig's result, Theorem \ref{kot-bieul}, tells us that in a connected $4$-regular graph we can complete any circuit decomposition, not just an euler circuit, to an embedding by adding a new euler circuit, albeit without specifiying orientability.  In Section \ref{sec:no-eulf} below we will show that this is true for all eulerian graphs, and that in most cases we can guarantee that the embedding is nonorientable.  However, in this section we examine the orientable situation, where completing a circuit decomposition $\cC$ to an orientable embedding with an euler circuit (or more generally, with at most two circuits) gives an upper embedding relative to $\cC$.  We can also consider upper directed embeddings relative to directed circuit decompositions in eulerian digraphs, where we construct a directed embedding by adding one or two directed circuits.  We provide more details of some results referred to earlier, and show that a natural extension of the results of Section \ref{sec:suff} fails.

The only results that we know of regarding upper embeddings of circuit decompositions that are not just euler circuits appear in \cite{EGS20, GGS05, GMS20, GPS18}, mentioned in Section \ref{sec:intro}.  These investigate the completion of a triangular decomposition of a graph or digraph to an orientable embedding by adding an euler circuit, giving an upper relative embedding.  They consider triangular decompositions that arise from structures in design theory such as Steiner triple systems and latin squares.

Theorems \ref{dirsteiner} and \ref{dirlatin} were proved first for embeddings of undirected graphs, but were later strengthened to apply to directed embeddings of digraphs.  We state only the directed versions.

\begin{theorem}[Grannell, Griggs, and \siran. \cite{GGS05};  Griggs, McCourt, and \siran. {\cite[Theorem 1.1]{GMS20}}]
\label{dirsteiner}
Let $\cC$ be an oriented Steiner triple system, i.e., a decomposition of a regular tournament $D$ into directed triangles.  Then there is an orientable directed embedding of $D$ with the elements of $\cC$ as the profaces and with exactly one antiface, which is a directed euler circuit.
\end{theorem}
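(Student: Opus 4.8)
The plan is to work entirely with the transition-graph description of embeddings from Lemma~\ref{standardemb} and to notice that orientability comes for free. Since $D$ is a regular tournament, every vertex $w$ has indegree and outdegree $d=(\deg w)/2$, and each directed triangle through $w$ contributes exactly one incoming and one outgoing half-arc there. The decomposition $\cC$ therefore induces at each vertex a perfect matching $M_w=\itr(\cC,w)$ between the $d$ incoming and the $d$ outgoing half-arcs, one edge per triangle. I want to produce a directed euler circuit $T$ whose transitions $M'_w=\itr(\{T\},w)$ complete this to an embedding: by Lemma~\ref{standardemb}\ref{standardemb1} I need $\cC\cup\{T\}$ to be cyclically compatible, i.e.\ $M_w\cup M'_w$ must be a single cycle on $A^*(w)$ at every $w$. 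The point is that once this holds, every arc lies on exactly one triangle and on $T$, so the embedding is $2$-face-colorable (triangles versus $T$); by Lemma~\ref{diremb-ori-2fc} it is then automatically orientable, with the triangles as the profaces and $T$ as the unique antiface. Thus orientability is never the difficulty, and the entire problem is to arrange that the ``antiface side'' of the triangles is a single circuit.

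Next I would reduce the local condition to a free choice. Since $M_w$ and $M'_w$ both match incoming to outgoing half-arcs, their union is a $2$-regular bipartite multigraph, and it is a single $2d$-cycle precisely when the permutation $M_w^{-1}M'_w$ of the $d$ triangles at $w$ is a single $d$-cycle. Conversely, prescribing \emph{any} single $d$-cycle $\pi_w$ on the triangles through $w$ determines a legal rotation in which each triangle's incoming and outgoing half-arcs are adjacent (so every triangle is a facial walk, and the half-arcs alternate in/out as in Observation~\ref{diremb-basic-s}\ref{db-alt-s}), while the complementary corners define $M'_w$. Hence for each choice of single $d$-cycles $\pi_w$, one per vertex, Lemma~\ref{standardemb} yields a valid orientable directed embedding of $D$ whose profaces are exactly the triangles of $\cC$ and whose antifaces form some directed circuit decomposition $\cA$ of $D$. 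The remaining task is purely combinatorial: choose the $\pi_w$ so that $|\cA|=1$.

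A parity observation frames the target and guides the merging. For any legal choice of the $\pi_w$ the orientable embedding has $N=|A(D)|/3$ profaces, and Euler's formula with orientability forces $N+|\cA|\equiv|A(D)|-|V(D)|\pmod 2$; since $|A(D)|=3N$ this gives $|\cA|\equiv|V(D)|\pmod 2$, which is odd because $D$ is a regular tournament. So $|\cA|$ is always odd, in particular never $0$, and it suffices to show that whenever $|\cA|\ge 3$ we can modify the rotations to decrease $|\cA|$ by $2$. I would do this with a local move: at a vertex $w$ meeting several antiface circuits, replace $\pi_w$ by $\pi_w\theta$, where $\theta$ is a $3$-cycle on three of the triangles at $w$. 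For three points lying on a single $d$-cycle, exactly one of the two cyclic orientations of $\theta$ keeps $\pi_w\theta$ a single $d$-cycle (the other splits it into three cycles), so the local cyclic-compatibility condition can always be preserved; and a suitably chosen such move recombines the antiface circuits passing through those three corners, lowering $|\cA|$ by $2$.

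The step I expect to be the main obstacle is exactly this merging move: I must guarantee, using only connectivity of $D$, that whenever $|\cA|\ge 3$ there is a vertex $w$ and a triple of its triangles for which the orientation of $\theta$ \emph{forced} by the single-$d$-cycle requirement \emph{simultaneously} effects the desired reduction of $|\cA|$ by $2$. Connectivity of $D$ readily supplies a vertex incident with at least two circuits of $\cA$, but reconciling ``keep $\pi_w$ a single $d$-cycle'' with ``reduce $|\cA|$ by exactly $2$'' requires a short case analysis of how the three chosen corners are distributed among the circuits of $\cA$ (and, should no single vertex meet three distinct circuits, a two-stage argument routed along a vertex shared by two of them). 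It is worth noting that the vertex-identification machinery of Corollary~\ref{identvert} gives an alternative route --- start from the disjoint planar embeddings of the individual triangles and identify the vertex copies through their antifaces --- but it confronts the same core difficulty: the bound it provides is only $|\cA'|\le N+\ell$, and pinning down the exact value $|\cA'|=1$ again comes down to showing that the antifaces actually coalesce into one circuit.
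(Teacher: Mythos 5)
Your setup is sound: the identification of $\itr(\cC,w)$ with a perfect matching between in- and out-half-arcs, the reduction of cyclic compatibility to choosing a single $d$-cycle $\pi_w$ at each vertex, the observation that orientability then comes free from Lemma~\ref{diremb-ori-2fc}, and the parity count $|\cA|\equiv|V(D)|\pmod 2$ are all correct. But the step you defer --- the merging move --- is not a loose end that ``a short case analysis'' will close; it is the entire content of the theorem, and in the generality you work in it is unfixable. Concretely, trace the three-corner move: cyclically reconnecting three corners reduces $|\cA|$ by $2$ exactly when the three corners lie on three \emph{distinct} antiface circuits (and then \emph{either} orientation of $\theta$ merges them, so the orientation forced by keeping $\pi_w$ a single cycle is harmless); if the corners are split $2{+}1$ between two circuits, \emph{both} orientations leave $|\cA|$ unchanged; if all three lie on one circuit, $|\cA|$ stays the same or grows by $2$. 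So progress requires a vertex meeting three distinct circuits of $\cA$, and connectivity of $D$ only guarantees a vertex meeting two. Worse, no refinement of such local moves can succeed: every hypothesis your argument uses --- connected eulerian digraph, decomposition into directed triangles contributing one in/out pair per vertex, odd $|V(D)|$ --- is also satisfied by the directed triangle decompositions arising from symmetric configurations in Theorem~\ref{configuration}\ref{configno}, which are $6$-regular, $4$-edge-connected, and parity-consistent, yet admit \emph{no} orientable directed embedding with the triangles as profaces and a single antiface. Hence any correct proof must exploit structure special to regular tournaments and Steiner triple systems (e.g., that every pair of vertices is adjacent), which your plan never invokes.

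For comparison: this paper does not prove Theorem~\ref{dirsteiner} at all; it quotes it from \cite{GGS05} and \cite{GMS20}, and notes at the end of Section~\ref{sec:circdec} that those proofs construct embeddings of the bipartite Levi (incidence) graph of the triple system and then translate back to $D$ --- precisely the kind of STS-specific global argument missing from your outline. Your closing remark about Corollary~\ref{identvert} is apt and symptomatic: both routes you sketch stall at the same point, namely pinning the number of antifaces to exactly one, and that is where the published proofs do their real work.
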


\begin{theorem}[Griggs, Psomas, and \siran. \cite{GPS18}; Griggs, McCourt, and \siran. {\cite[Theorem 1.2]{GMS20}}]
\label{dirlatin}
Let $\cC$ be an oriented latin square of odd order, i.e., a decomposition of an eulerian orientation $D$ of a balanced complete tripartite graph $K_{p,p,p}$ into directed triangles.  Then there is an orientable directed embedding of $D$ with the elements of $\cC$ as the profaces and with exactly one antiface, which is a directed euler circuit.
\end{theorem}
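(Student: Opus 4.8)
The plan is to recast Theorem~\ref{dirlatin} in the face-tracing language of Lemma~\ref{standardemb} and then build the desired embedding by a parity-controlled face-merging argument. Write $p$ for the (odd) order and let $\cC$ be the decomposition of $D$ into its $p^2$ directed triangles. An orientable directed embedding of $D$ with the triangles as profaces and a single antiface is, by Lemmas~\ref{standardemb} and~\ref{diremb-ori-2fc}, exactly a single closed directed walk $T$ such that $\cC \cup \set{T}$ is cyclically compatible: such a collection is then the face set of a directed embedding, which is orientable because coloring the triangles one color and $T$ the other is a proper $2$-face-coloring (each edge lies in exactly one triangle of $\cC$, so its other side must be $T$), and this forces $T$ to use every edge exactly once, i.e.\ to be a directed euler circuit. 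Thus it suffices to produce a rotation system in which, at every vertex $v$, the triangle transitions combine with one extra transition system to make $\itr(\cC\cup\set{T},v)$ a single cycle on $E^*(v)$.

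First I would set up the bookkeeping. At each vertex $v$ (of degree $2p$) the triangles pair each incoming half-arc with the outgoing half-arc of the same triangle, giving a perfect matching $\pi_v$ on $E^*(v)$; a rotation keeping the triangles as profaces is precisely a choice at each $v$ of a cyclic order of the $p$ ``triangle-corners'' (the $\pi_v$-pairs), and the antifaces are then determined. Any such choice yields some number $A\ge 1$ of antifaces. The crucial preliminary is a parity fact: since $p$ is odd, every vertex has degree $2p\equiv 2\pmod 4$, so by Observation~\ref{faceszeroverts} every orientable embedding of $D$ has an even number of faces; as there are $p^2$ profaces with $p^2$ odd, the number $A$ of antifaces is \emph{always odd}. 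In particular the target value $A=1$ has the correct parity, and every configuration we construct has $A$ odd.

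The construction is then an iterative reduction of $A$ to $1$. Starting from any admissible rotation as above, while $A\ge 3$ I would locate a vertex $v$ at which two distinct antifaces appear among its corners — such a $v$ exists because $D$ is connected and the antifaces cover all arcs, so a partition of the antifaces into two vertex-disjoint classes would disconnect $D$ — and then modify the cyclic order of the corners at $v$ to splice these antifaces together. Because the parity argument forces $A$ to stay odd, any legal modification changes $A$ by an even amount, so a successful splice reduces $A$ by exactly $2$; iterating terminates at $A=1$, which by the first paragraph is the required embedding. The odd-order hypothesis thus enters precisely as the parity gatekeeper that makes a one-antiface target reachable, and matches the parity constraint already visible in Observation~\ref{faceszeroverts}.

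The step I expect to be the real obstacle is this local move. The naive fix — transposing two corners at $v$ to merge their antifaces — does merge them, but it splits the single cyclic corner-order at $v$ into two, destroying the rotation (equivalently it splits $v$), and so is illegal here in a way it is not for unconstrained rotation systems. The genuine work is to show that whenever $A\ge 3$ one can instead rearrange \emph{three} corners at a single vertex (a $3$-cycle in the corner-order, which keeps that order a single cycle and hence preserves both the rotation and the fact that all profaces are triangles) so as to net a decrease of $A$ by $2$. Proving that such a three-corner move always exists while $A\ge 3$ is where the global connectivity of $D$ together with the eulerian/latin-square structure of $\cC$ must be exploited. As a parallel check I would also try to bypass merging altogether by writing an explicit rotation directly from the quasigroup underlying the latin square and verifying that its antiface permutation is a single cycle, with the odd order once more guaranteeing closure.
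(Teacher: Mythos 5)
This is a statement the paper quotes from the literature rather than proves: Theorem \ref{dirlatin} is cited from \cite{GPS18,GMS20}, and the paper notes that those proofs work by embedding the bipartite Levi (incidence) graph of the triangle decomposition and translating back, an argument ``specific to triangle decompositions.'' Your proposal takes a completely different route (parity plus iterated local face-merging), so it must be judged on its own merits, and it has a genuine gap at exactly the step you flag as ``the real obstacle.'' Your preliminary bookkeeping is fine: the reduction to cyclic compatibility via Lemmas \ref{standardemb} and \ref{diremb-ori-2fc}, the deduction that a single antiface is forced to be a directed euler circuit, the oddness of the antiface count $A$ via Observation \ref{faceszeroverts}, and the existence (when $A\ge 2$) of a vertex incident with two distinct antifaces all hold. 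But the entire proof then rests on the unproved claim that whenever $A\ge 3$ some rearrangement of corners at a single vertex strictly decreases $A$, and this claim is not just unverified --- it is false at the level of generality at which you argue.

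The paper itself supplies the counterexample to your strategy: Theorem \ref{configuration}\ref{configno} (Erskine, Griggs, and \siran.) gives $6$-regular, $4$-edge-connected graphs (so all degrees are $2 \bmod 4$, the parity and connectivity hypotheses you invoke are satisfied, and the hypothetical one-extra-face embedding has admissible Euler genus) with triangle decompositions that admit \emph{no} orientable relative embedding with a single additional face. For such a decomposition, every configuration of corner rotations has $A\ge 3$, so no sequence of antiface-reducing local moves can exist; your induction would simply get stuck. Consequently, parity, connectivity, and ``the profaces are triangles'' cannot suffice: any correct proof of Theorem \ref{dirlatin} must use the latin-square (quasigroup) structure of $\cC$ in an essential, global way, and your sketch never does --- it only gestures at the latin-square structure as something ``to be exploited'' in establishing the local move. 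Your fallback idea (write down an explicit rotation from the quasigroup and verify its antiface permutation is a single cycle) is much closer in spirit to what the cited papers actually do, but as stated it is a plan rather than a proof.
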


For graphs and digraphs obtained from the incidence structures known as symmetric configurations of block size $3$, there is a positive result for small cases but a negative result in general.

\begin{theorem}[Erskine, Griggs, and \siran. {\cite[Theorems 2.2 and 3.2]{EGS20}}]
\label{configuration}
Suppose $G$ is the associated graph of a symmetric configuration $n_3$, i.e., $G$ is a $6$-regular $n$-vertex simple graph with a decomposition $\cT$ into triangles.  Let $\cC$ be a collection of directed triangles obtained by directing the elements of $\cT$, and let $D$ be the digraph obtained from $G$ by directing the edges as specified by $\cC$.

\begin{enumerate}[(a), nosep]
\item\label{configyes}
If $n$ is odd and $7 \le n \le 19$, then every for every such $D$ and $\cC$ there is an orientable directed embedding of $D$ with the elements of $\cC$ as the profaces and with exactly one antiface, which is a directed euler circuit.

\item\label{configno}
If $n$ is odd and $n \ge 21$ then there exists such a graph $G=G_0$ with triangle decomposition $\cT=\cT_0$ such that there is no orientable embedding of $G_0$ with the elements of $\cT_0$ as faces and exactly one additional face.  Thus, for every $D$ and $\cC$ derived as above from $G_0$ and $\cT_0$ there is no orientable directed embedding of $D$ with the elements of $\cC$ as the profaces and with exactly one antiface.
\end{enumerate}
\end{theorem}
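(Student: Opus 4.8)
The plan is to recast the whole statement as a question about minimising a single integer-valued function on $\{0,1\}^n$. Because $\cC$ is a directed triangle decomposition of the $6$-regular digraph $D$, at every vertex $v$ the three directed triangles through $v$ pair the three incoming half-arcs with the three outgoing half-arcs. By Observation \ref{diremb-basic-s} and Lemma \ref{diremb-ori-2fc} an orientable directed embedding must have an alternating rotation at $v$, and all three triangles are profaces exactly when each triangle corner (an in-arc immediately followed by its out-arc) occurs consecutively in the rotation. This leaves precisely the two cyclic orderings of the three corners as admissible rotations at $v$, so there are exactly $2^n$ rotation systems realising $\cC$ as the proface collection, and for each one the antifaces are forced. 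Writing $a(\mathbf{x})$ for the number of antifaces of a choice $\mathbf{x}\in\{0,1\}^n$, Euler's formula with $|E|=3n$ and with $n$ profaces gives $a(\mathbf{x})=n+2-2g$; since $n$ is odd, $a(\mathbf{x})$ is always odd. Thus $a(\mathbf{x})\ge 1$ always, the next achievable value is $3$, and ``exactly one antiface'' is equivalent to attaining the maximum genus $g=(n+1)/2$, i.e.\ to $\min_{\mathbf{x}}a(\mathbf{x})=1$.

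To evaluate and minimise $a$ I would pass to the antiface transition system: the forced antiface corners at each vertex form a $1$-regular pairing of the six half-arcs, and flipping the choice at $v$ toggles between the two such pairings. Tracing antifaces is then exactly counting circuits in a transition system, which is governed by an intersection matrix over $\mZ_2$ in the manner of the classical $4$-regular theory and its extension to isotropic systems. This both gives a polynomial-time computation of $a(\mathbf{x})$ and, more importantly, reduces ``$\min_{\mathbf{x}}a(\mathbf{x})=1$'' to a nondegeneracy (maximal $\mZ_2$-rank) condition on that matrix.

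For part \ref{configyes} I would verify this nondegeneracy by exhaustive computation over the catalogue of symmetric configurations $n_3$, which is available for every $n\le 19$. For each configuration, and each of the $2^n$ orientations $\cC$, the quantity $\min_{\mathbf{x}}a(\mathbf{x})$ is computed from the $\mZ_2$-rank formula rather than by brute-force face tracing; pruning by the automorphisms of each configuration (and early termination once a single-antiface choice is found) keeps even the $n=19$ case feasible. The positive result is then just the assertion that this minimum equals $1$ in every instance.

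For part \ref{configno} the goal is to exhibit, for each odd $n\ge 21$, a configuration $G_0$ with decomposition $\cT_0$ for which $\min_{\mathbf{x}}a(\mathbf{x})\ge 3$ under every orientation. Following the template of Example \ref{cycledigons} and Proposition \ref{forbconf}, I would first isolate a small local sub-configuration --- a ``bad gadget,'' most plausibly located by computer search at the threshold $n=21$ as in Example \ref{exforbconf} --- that forces the antiface transition system to split; I would then show that this split is robust, in that there is a cut or interlacement pattern which every admissible antiface circuit must respect, so at least two antifaces are forced irrespective of the ambient structure and of the orientation. Finally, larger odd $n$ are reduced to the base case by padding $G_0$ with further configuration structure chosen to leave the gadget intact. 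I expect this last robustness step to be the main obstacle: one must prove that no global rerouting and no choice of triangle orientation can re-merge the forced antifaces, and an ad hoc case analysis over the $2^n$ orientations is hopeless, so a clean orientation-invariant obstruction --- a $\mZ_2$-rank drop or a cut-parity invariant analogous to the bad $2$-edge-cuts of Proposition \ref{badcut} --- is essential.
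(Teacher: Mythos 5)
Your setup is correct (the two admissible rotations per vertex, the $2^n$ rotation systems realising $\cC$ as profaces, and the parity count $a(\mathbf{x})=n+2-2g$), but the load-bearing step of the whole proposal --- that $\min_{\mathbf{x}}a(\mathbf{x})=1$ reduces to a ``maximal $\mZ_2$-rank'' nondegeneracy condition on some intersection matrix --- is asserted, never constructed, and is not a known fact. The Cohn--Lempel/interlacement machinery you invoke is specific to $4$-valent vertices, where the two admissible transition systems differ by a single transposition; here the two admissible rotations at a $6$-valent vertex differ by reversing a $3$-cycle of corners, and no closed-form rank formula for the circuit count, let alone for its minimum over all $2^n$ choices, follows from that theory. (Even in the genuinely $4$-regular directed case, deciding bi-eulerian embeddability required the delta-matroid methods of Geelen, Iwata and Murota \cite{GIM03}, not a rank computation.) Without this reduction, your plan for part \ref{configyes} collapses on feasibility grounds alone: the catalogue of symmetric configurations $19_3$ runs to billions of isomorphism classes, each of which you propose to test against all $2^{19}$ orientations, which is astronomically out of reach and is not how the cited result is obtained.

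The gap you yourself flag in part \ref{configno} --- the need for an orientation-invariant obstruction --- is exactly what the paper's route supplies and your direct approach lacks. The paper does not prove Theorem \ref{configuration}; it cites \cite{EGS20} and explains (in Section \ref{sec:circdec}) that those proofs pass to the bipartite Levi/incidence graph $H$ between points and triangles: orientable embeddings of $G$ relative to $\cT$ with $k$ outer faces correspond to orientable embeddings of the cubic bipartite graph $H$ with $k$ faces, and the rotations at the triangle-side vertices of $H$ encode the orientation $\cC$. Non-existence is then certified by a purely undirected, orientation-free criterion --- for every spanning tree $T$ of $H$, the graph $H-E(T)$ has at least two odd components, so Jungerman's condition \cite{Jun78} (or Xuong's formula \cite{Xuo79b}) rules out a one-face embedding of $H$ --- which kills every orientation $\cC$ simultaneously, exactly the ``clean orientation-invariant obstruction'' you say is essential but do not have; the constructions in \cite{EGS20} then scale to all odd $n\ge 21$, whereas your ``padding'' step is also unproven. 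As it stands, your proposal is a research program with its two critical steps (the rank formula, and the orientation-invariant obstruction) missing, not a proof of either part.
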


The proofs of Theorems \ref{dirsteiner} through \ref{configuration} use arguments that are specific to triangle decompositions.  They find embeddings of the bipartite \emph{Levi graph} or \emph{incidence graph} which represents incidences between triangles and vertices, and use a standard translation of those into embeddings of the original graph (see \cite{GMS20} for details).

It is natural to ask whether we can generalize Theorem \ref{2mod4main} or its undirected counterpart to say that when all degrees are $2$ mod $4$ we can complete a given circuit decomposition, rather than a just given euler circuit, to an orientable embedding by adding an euler circuit.
But Theorem \ref{configuration}\ref{configno} above shows that this is false.  The examples it gives are $6$-regular (so all vertices have degree $2$ mod $4$), $4$-edge-connected (so there is no problem created by $2$-edge-cuts), and a hypothetical relative embedding with an euler circuit face would have even Euler genus (so there is no parity issue with finding an orientable embedding), but we cannot complete the triangle decomposition to an embedding by adding an euler circuit.
These examples are simple graphs with at least $21$ vertices, but the same method (i.e., using incidence graphs) also gives smaller examples if we allow loops and multiple edges.
 % Note just for us: that EGS examples are $4$-edge-connected follows from fact that Levi graph is $2$-edge-connected because it is a cubic bipartite graph, and parity conditions imply that they have an odd number of faces other than triangle decomposition faces, i.e. `no one face' doesn't mean there are $2$ just for parity reasons.

For example, at left in Figure \ref{egsextendedfig} is a $4$-edge-connected $6$-regular $3$-vertex graph $G$ with a decomposition into three eulerian subgraphs with edge sets $\set{e_i, f_i, g_i}$ for $i = 1, 2, 3$. Let $\cC=\set{C_1, C_2, C_3}$ be a circuit decomposition of $G$ where each circuit traverses one of the subgraphs.  At right in Figure \ref{egsextendedfig} is the incidence graph $H$, a bipartite graph (with multiple edges) representing incidences between $V(G)$ (solid vertices) and $\cC$ (open vertices).
Under a slight generalization of the translation described in \cite{GMS20}, orientable embeddings of $H$ with $k$ (necessarily odd) faces correspond to orientable embeddings of $G$ relative to $\cC$ with $k$ outer faces.
But for every spanning tree $T$ of $H$, $H - E(T)$ has two components with an odd number of edges, so by Jungerman's condition \cite[Theorem 2]{Jun78} (or Xuong's maximum genus formula \cite{Xuo79b}) there is no orientable embedding of $H$ with one face, and hence no orientable embedding of $G$ relative to $\cC$ with an euler circuit outer face.

\begin{figure}
    \centering
    \hbox to \hsize{\hfil
        \includegraphics[scale=1.2]{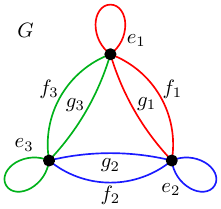}\hfil
        \includegraphics[scale=1.2]{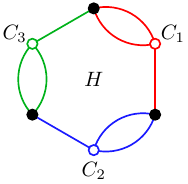}\hfil
    }
 \caption{Circuit decomposition that cannot be completed.}
 \label{egsextendedfig}
\end{figure}

%\section {Comments on the $4$-regular case}\label{sec:4reg} saved in DenseGraphs paper

\section{Nonorientable embeddings with euler circuit faces}
\label{sec:no-eulf}

A directed embedding with at least one euler circuit face is orientable by Lemma \ref{diremb-ori-2fc}, so for nonorientable embeddings with euler circuit faces we only study undirected graphs.  In this section we show that every euler circuit, or more generally every circuit decomposition, of an eulerian graph can be completed to an embedding by an euler circuit, and the embedding can be chosen to be nonorientable except in some well-characterized cases.

We use several results of \siran. and \skoviera. \cite{SS88}.
A collection of closed walks $\cW$ in a graph $G$ is \emph{realizable}, \emph{orientably realizable}, or \emph{nonorientably realizable} if $G$ has respectively an unrestricted, orientable, or nonorientable embedding relative to $\cW$.
\siran. and \skoviera. stated their results in terms of \emph{words}, sequences of half-edges incident with a vertex $v$.  In our framework their words correspond to components of the transition graph $\itr(\cW, v)$.

The first result we need characterizes realizable and orientably realizable collections $\cW$.

\begin{theorem}[{\siran. and \skoviera., \cite[Proposition 1 and Corollary 1]{SS88}}]\label{realizable}
Let $\cW$ be a collection of closed walks in a graph $G$.  Then $\cW$ is realizable if and only if $\itr(\cW, v)$ is a subgraph of a cycle (i.e., either the cycle itself or a union of disjoint paths) for every $v \in V(G)$.  Realizability implies that each edge of $G$ is used at most twice by $\cW$.

Moreover, a realizable $\cW$ is orientably realizable if and only if every element of $\cW$ can be oriented to give a directed walk, so that each edge used twice by $\cW$ is used once in each direction.
\end{theorem}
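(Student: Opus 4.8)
The plan is to reduce both biconditionals to the characterization of embeddings by facial walks in Lemma~\ref{standardemb}. Recall that realizing $\cW$ means finding a second collection $\cW'$ of closed walks so that $\cW \cup \cW'$ is cyclically compatible, i.e., so that $\itr(\cW \cup \cW', v) = \itr(\cW, v) \cup \itr(\cW', v)$ is a single cycle on the vertex set $E^*(v)$ for every $v$. Thus realizing $\cW$ amounts to completing each transition graph $\itr(\cW, v)$ to a spanning cycle by adding transitions, and then checking that the added transitions genuinely assemble into closed walks. I would treat the two directions of each biconditional separately, dispatching the easy ``only if'' directions by restriction and the ``if'' directions by an explicit completion.

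For the first biconditional the forward direction is immediate: if $\cW$ is realizable then $\itr(\cW, v)$ is a subgraph of the spanning cycle $\itr(\cW \cup \cW', v)$, hence a union of disjoint paths; since every vertex of a cycle has degree at most $2$, every half-edge lies in at most two transitions, and because each traversal of an edge $e$ by $\cW$ contributes one to the degree of the half-edge of $e$ at each of its ends, this forces each edge to be used at most twice. For the backward direction I would observe that a union of disjoint paths on $E^*(v)$ can always be joined end-to-end into a single spanning cycle; the added edges are the transitions of $\cW'$. The remaining point is that these added transitions, together with the edge-traversals not supplied by $\cW$ (edge $e$ must be supplied $2 - t_e$ times, where $t_e$ is the number of times $\cW$ uses $e$), reassemble into honest closed walks. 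This follows from a balance argument: at each vertex the added transitions pair up exactly the loose ends, so following them traces out a well-defined collection $\cW'$, and by Lemma~\ref{standardemb}\ref{standardemb1} the collection $\cW \cup \cW'$ is then the facial-walk collection of an embedding relative to $\cW$.

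For the orientability biconditional I would fix an orientation of $\cW$ and pass to a directed version of the transition graph, orienting each transition $\set{a, b}$ coming from a walk that enters $v$ via $a$ and leaves via $b$ as the arc $a \to b$. The forward direction again comes by restriction from Lemma~\ref{standardemb}\ref{standardemb2}. For the backward direction, the hypothesis that every edge used twice by $\cW$ is used once in each direction is exactly what guarantees that in this directed transition graph each half-edge has in-degree at most $1$ and out-degree at most $1$, so $\itr(\cW, v)$ oriented in this way is a disjoint union of \emph{directed} paths. I would then complete it to a single directed spanning cycle by chaining these directed paths head-to-tail; one checks that each connecting arc runs from a path-sink (which still has residual capacity to be entered) to a path-source (which still has residual capacity to be left), so the added transitions again correspond to genuine directed walks. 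The resulting single directed cycle at each vertex is precisely the transition graph of an orientation of $\cW \cup \cW'$ in which each edge is used once in each direction, so by Lemma~\ref{standardemb}\ref{standardemb2} the relative embedding is orientable.

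The main obstacle is the bookkeeping in the two ``if'' directions: one must verify that completing the (directed) transition graphs vertex-by-vertex produces a globally consistent collection $\cW'$ of closed walks rather than an ill-defined patchwork, and that the walks of $\cW$ survive intact as faces. The cleanest way to handle this is to regard an embedding, following the remark after Lemma~\ref{standardemb}, simply as a cyclically compatible collection of closed walks using each edge exactly twice; then the only thing to check is that the transition data prescribed at the vertices is realized by \emph{some} $\cW'$ using each edge the correct number of times, which is the balance argument above. In the directed case the same argument must additionally respect the in/out labels, and confirming that the head-to-tail chaining never demands an unavailable residual traversal is the one place where the ``once in each direction'' hypothesis is essential.
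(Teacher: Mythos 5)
There is nothing in the paper to compare your argument against: Theorem \ref{realizable} is stated as a quoted result of \v{S}ir\'{a}\v{n} and \v{S}koviera \cite{SS88}, and the paper gives no proof of it. Judged on its own terms, your proof is correct, and it is the natural argument within this paper's framework: both biconditionals reduce to Lemma \ref{standardemb}, realizability becomes the problem of completing each $\itr(\cW,v)$ to a spanning cycle on $E^*(v)$, and the one substantive issue --- which you correctly isolate --- is that the transitions added at the vertices must assemble into a genuine collection $\cW'$ of closed walks with the right edge multiplicities. Your balance argument settles this: each traversal of an edge $e$ by $\cW$ contributes exactly one to the degree of the corresponding half-edge in the transition graph at each end of $e$, so after completion the added transitions incident with a half-edge of $e$ number exactly $2-t_e$ at \emph{both} ends; pairing the unused traversal slots with these transition-incidences then forces a decomposition into closed walks, and cyclic compatibility holds by construction. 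In the directed case, your observation that the ``once in each direction'' hypothesis is exactly what gives in-degree and out-degree at most $1$ in the oriented transition graph --- so that every component of $\itr(\cW,v)$ becomes a \emph{directed} path and head-to-tail chaining is always available, with each connecting arc matched to residual directed capacity on both sides --- is the right key point, and the same balance argument finishes it via Lemma \ref{standardemb}\ref{standardemb2}. Two minor points you could make explicit in a write-up: at a vertex where $\itr(\cW,v)$ is already a spanning cycle nothing is added, which is consistent since every edge at such a vertex is already used twice by $\cW$; and walks that immediately double back produce loops in $\itr(\cW,v)$, which are not subgraphs of a cycle, so such collections are correctly rejected by the criterion rather than being a gap in it.
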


The second result that we use is a simplified statement of \siran. and \skoviera.'s result characterizing when $\cW$ is both orientably and nonorientably realizable \cite[Theorem 3]{SS88}.

\begin{proposition}\label{our-ori2nonori}
Let $\cW$ be an orientably realizable collection of closed walks in a graph $G$.  Then $\cW$ is also nonorientably realizable if and only if there exist a vertex $v$ and two half-edges incident with $v$ that belong to the same block of $G$ but different components of $\itr(\cW, v)$.
\end{proposition}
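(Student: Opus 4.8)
The plan is to prove the two directions of the equivalence using the rotation--signature description of an embedding together with Theorem \ref{realizable}; the statement is essentially a repackaging of \siran. and \skoviera.'s Theorem~3 \cite{SS88} under the dictionary that a \emph{word} at $v$ in their sense is exactly a component of the transition graph $\itr(\cW,v)$, so a second route is simply to verify that this translation is faithful (taking care of the degenerate words coming from isolated half-edges, loops, and edges used twice). Throughout I fix, as permitted by the second part of Theorem \ref{realizable}, an orientation of the walks of $\cW$ in which each doubly-used edge is traversed once in each direction; this orients every transition (edge of $\itr(\cW,v)$) and hence directs each component of $\itr(\cW,v)$ as an alternating in/out path.

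For the sufficiency direction, suppose a vertex $v$ carries half-edges $a,b$ lying in the same block $B$ but in different components of $\itr(\cW,v)$. I would start from an orientable realization $\Phi_0$, with facial set $\cW \cup \cW'$. In the oriented model the forced transitions make each component of $\itr(\cW,v)$ occupy a contiguous arc of the rotation at $v$, with the gaps between arcs contributing corners of the outer faces $\cW'$. Because $a$ and $b$ lie in distinct arcs, I can reverse one of these arcs (equivalently, flip the signatures of the edges whose half-edges lie in that component at $v$): this local move disturbs no forced transition, so the inner faces $\cW$ are preserved and only the outer corners at $v$ and the surface change. Since $a$ and $b$ lie in the common block $B$, there is a cycle $C$ of $G$ through $v$ meeting both components, and the reversal changes the signature product around $C$ from positive to negative; thus the modified embedding realizes $\cW$ and is nonorientable. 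The point to check is that this twist genuinely lands in the nonorientable case and cannot be undone by vertex sign-switching, which is exactly what the $2$-connectivity of $B$ guarantees.

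For the necessity direction I argue the contrapositive: assuming that at every vertex any two half-edges in a common block lie in the same component of $\itr(\cW,v)$, I show every realization of $\cW$ is orientable, so no nonorientable realization exists. The hypothesis says that within a single block $B$ all of $B$'s half-edges at each vertex are threaded by $\cW$ into one component, leaving no freedom to reverse one part of the rotation against another inside $B$; combined with the fixed in/out orientation of $\cW$, this rigidity forces the induced embedding of $B$ to be orientable (no cycle of $B$ can have its signature made negative). Finally, distinct blocks meet only at cut-vertices and the block--cut tree is a tree, so the orientably embedded blocks can be glued with no cyclic obstruction---reflecting blocks as needed---to orient the whole embedding. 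Hence every realization is orientable.

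The main obstacle is the necessity direction: making precise the claim that a single component per block per vertex leaves ``no room for an odd twist,'' i.e.\ that every cycle of $G$ carries even signature in every realization once the transitions are forced in this way, and correctly reducing the global orientability question to the block-by-block one via the block--cut tree (handling bridges, loops, and the distinction between edges used once versus twice by $\cW$). In the sufficiency direction the corresponding care is needed to confirm that the local reversal produces a bona fide embedding whose inner faces are still exactly $\cW$ and whose nonorientability is witnessed by the cycle in $B$.
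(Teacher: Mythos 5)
The paper itself gives no proof of this proposition: it is stated as a simplified translation of \siran. and \skoviera.'s Theorem 3 in \cite{SS88}, under precisely the dictionary you mention (their \emph{words} at $v$ are the components of $\itr(\cW,v)$). So your ``second route'' is not merely an alternative; it \emph{is} the paper's approach, and carrying out the faithfulness check of that dictionary --- including the degenerate cases you flag (isolated half-edges, loops, doubly-used edges) --- is all the paper requires.

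Your primary route, the direct proof, has genuine gaps as written. In the sufficiency direction the mechanism is right, and the nonorientability certificate is cleaner than you make it: the parity of negative edges on a cycle is invariant under vertex switching, so no separate ``cannot be undone by switching'' argument is needed --- one checks that a cycle of $B$ through the edges of $a$ and $b$ acquires exactly one sign flip (this works even when that cycle is a loop or a digon). But the claim that the partial switch ``preserves the inner faces $\cW$'' is asserted, not proved, and it is not automatic: when a walk of $\cW$ visits $v$ several times, traverses a loop at $v$, or uses an edge twice, preserving the set of transitions at $v$ does not by itself preserve the walks (the paper's Observation \ref{uniquewalks} needs each edge used at most once), so you must actually re-trace the faces in the modified rotation--signature data. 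The necessity direction is the real gap: the assertion that one component per block per vertex ``forces the induced embedding of each block to be orientable'' is exactly the statement to be established, and you explicitly defer it. Moreover, the block-by-block reduction is subtler than the block--cut-tree picture suggests: at a cut vertex a walk of $\cW$ may pass from one block into another, so a single component of $\itr(\cW,v)$ can contain half-edges of several blocks; your hypothesis does not give each block its own component, so the constraints that $\cW$ imposes on a block's embedding cannot be read off block-locally without further argument. As it stands, the necessity half is a plan, not a proof, and completing it would essentially reproduce the work of \cite{SS88} --- which is why deferring to that theorem, as the paper does, is the sensible course.
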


We also use the following result on reducing the number of faces in a relative embedding, and a simple observation about transition graphs.

\begin{theorem}[{\siran. and \skoviera., \cite[Theorem 1]{SS88}}]\label{ss-no-range}
Let $\cW$ be a realizable collection of walks in a graph $G$, and suppose that there is an embedding of $G$ relative to $\cW$ with $t$ outer faces.  Suppose the subgraph of $G$ obtained by deleting all the edges of $G$ that appear twice in $\cW$ has $c$ nontrivial components.  Then for every $s$ with $c \le s < t$ there is a nonorientable embedding of $G$ relative to $\cW$ with $s$ outer faces.
\end{theorem}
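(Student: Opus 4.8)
The plan is to begin with the given relative embedding, which has $t$ outer faces, and to repeatedly merge pairs of outer faces, one merge at a time, each merge reducing the number of outer faces by exactly one and introducing a crosscap, until the desired count $s$ is reached. Two structural observations frame the argument. Since an edge used twice by $\cW$ is not used by the outer walks at all, every outer facial walk traverses only edges of the subgraph $G^-$ obtained from $G$ by deleting all twice-used edges; being a closed walk, each outer face therefore lies inside a single component of $G^-$. Conversely, every edge of a nontrivial component of $G^-$ is used at most once by $\cW$ and hence at least once by the outer faces, so each of the $c$ nontrivial components carries at least one outer face. This yields the lower bound (any relative embedding has at least $c$ outer faces) and shows that faces in different components can never be merged, so $c$ is the floor of the achievable range.

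Second, I would set up the local merge move. Viewing an embedding through its facial walks (Lemma \ref{standardemb}), keeping $\cW$ as the inner faces means precisely that at each vertex $v$ the transitions recorded by $\itr(\cW,v)$ must appear as corners of the embedding, i.e.\ as rotation-consecutive pairs of half-edges; since $\itr(\cW,v)$ is a subgraph of a cycle by Theorem \ref{realizable}, these forced inner corners glue the half-edges at $v$ into blocks that must occur as contiguous runs of the rotation, with the outer faces passing through the gaps between blocks. If two distinct outer faces both have a corner at $v$, then walking around $v$ I can find two cyclically adjacent outer corners belonging to different faces; I would then perform the standard crosscap move on the single block lying between them, reflecting that block in the rotation and flipping the signatures at its two ends. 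The forced inner corners survive, the two chosen outer faces fuse into one, no other facial walk is disturbed, and the crosscap renders the surface nonorientable.

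With these tools the induction is routine. As long as the current relative embedding has more than $c$ outer faces, some component $K$ of $G^-$ carries at least two of them by the pigeonhole principle. A short connectivity argument shows that within the connected graph $K$ two of its outer faces must share a vertex: otherwise the outer faces of $K$ could be split into two nonempty classes with no shared vertices between them, forcing the edge sets that the two classes cover to be vertex-disjoint, contradicting the connectedness of $K$. Applying the merge move at such a shared vertex reduces the outer-face count by exactly one and leaves a relative embedding. Iterating from $t$ downward and stopping at $s$ produces, for each $s$ with $c \le s < t$, a relative embedding with $s$ outer faces; since $s < t$ forces at least one merge to have been performed, the embedding is nonorientable, as required.

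The step I expect to be the main obstacle is the verification of the local merge move: checking by explicit face tracing that reflecting the block and flipping the two boundary signatures (i) preserves every forced inner corner, so that $\cW$ remains exactly the set of inner faces, (ii) merges precisely the two chosen outer faces and changes no other facial walk, hence drops the face count by exactly one, and (iii) produces a nonorientable surface regardless of whether the starting surface was orientable. This is the familiar combinatorial effect of a crosscap inserted between two faces meeting at a vertex, but confirming that exactly one face is lost and that orientability genuinely flips in every case is where the careful bookkeeping lies.
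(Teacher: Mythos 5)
Your task here has a wrinkle: the paper does not prove this statement at all. It is imported as Širáň and Škoviera's interpolation theorem for relative embeddings \cite[Theorem 1]{SS88} and used as a black box in the proof of Theorem \ref{no-circdecomp}, so there is no internal proof to compare against and your proposal must stand on its own. Its architecture is sound and is the natural surgery argument: every outer face is a nontrivial closed walk using only edges of the subgraph obtained by deleting the doubly-covered edges, hence lies in a single nontrivial component of that subgraph, and each of the $c$ nontrivial components carries at least one outer face; while more than $c$ outer faces remain, pigeonhole plus connectedness of some component yields a vertex $v$ where two distinct outer faces have corners, hence two cyclically adjacent outer corners (gaps between the blocks forced by $\itr(\cW,v)$) lying on distinct outer faces, and a local merge there drops the outer count by exactly one while fixing $\cW$. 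Your point (iii) is also true, and has a clean proof you did not supply: if the merged embedding were orientable, orienting its facial walks as in Lemma \ref{standardemb}(b) and then splitting the merged walk back into an oriented $F_1$ and an oriented $F_2$ would consistently orient the original embedding; combined with Euler-genus parity when the original embedding is orientable, every merge output is nonorientable, which is exactly what your iteration needs since the intermediate embeddings are themselves nonorientable.

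The step that fails as literally stated is the implementation of the merge. Reflecting the block $B$ between the two chosen outer corners and ``flipping the signatures at its two ends'' does not preserve the inner faces once $B$ contains three or more half-edges. Keeping the inner transitions as rotation-consecutive pairs is necessary but not sufficient: face tracing also depends on signatures, and with only the two end edges twisted, an inner face of $\cW$ entering the block along an interior edge $e$ of $B$ emerges at the far end of $e$ on the wrong side, turns the wrong way there, and ceases to be a facial walk, so the embedding is no longer relative to $\cW$. The correct rule for a partial reflection is to toggle the signature of every edge having exactly one half-edge in $B$ (edges with both half-edges in $B$ are unchanged). Better still, in this paper's framework you can avoid signatures entirely and do the surgery on facial walks via Lemma \ref{standardemb}(a). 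If the block is a single half-edge of an edge $e$, then $e$ is traversed once by each of $F_1$ and $F_2$; replace them by the single closed walk that traverses $e$, follows the rest of $F_1$, traverses $e$ again in the same direction, and follows the rest of $F_2$: no transition at any vertex changes, so cyclic compatibility persists, and the double traversal of $e$ in one direction certifies nonorientability outright. If the block is longer, the two outer corners $\set{x,b_1}$ and $\set{b_k,y}$ are disjoint, and of the two ways to re-pair these four half-edges exactly one, namely $\set{x,b_k}$ and $\set{b_1,y}$, keeps the transition graph of the full face collection at $v$ a single cycle; it corresponds to concatenating $F_1$ with the reversal of $F_2$, which is precisely your reflection move. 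With the merge repaired in either of these ways, your induction goes through and the theorem follows.
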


\begin{observation}\label{uniquewalks}
If a collection of closed walks $\cW$ uses each edge of $G$ at most once, then the graphs $\itr(\cW,v)$, $v \in V(G)$, indicate which edges of $G$ are used by $\cW$ and describe how to connect those edges together into closed walks.  They therefore determine $\cW$ uniquely.
\end{observation}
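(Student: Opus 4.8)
The plan is a direct reconstruction argument: I would recover the closed walks of $\cW$ from the edge structure of $G$ together with the transition graphs $\itr(\cW,v)$, and observe that this recovery is forced, so $\cW$ is unique.

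First I would extract the key structural consequence of the hypothesis. Since each edge of $G$ is used at most once by $\cW$, each half-edge is used at most once. Every edge of $\itr(\cW,v)$ joins the incoming half-edge to the outgoing half-edge of a single passage of some walk of $\cW$ through $v$, so a half-edge lying in two transition edges would be used twice. Hence each $\itr(\cW,v)$ has maximum degree $1$: it is a matching on $E_G^*(v)$ together with isolated vertices. Moreover a half-edge $h$ is non-isolated in $\itr(\cW,v)$ exactly when $\cW$ traverses the edge $\set{h,\mate(h)}$, so the transition graphs indicate precisely which edges of $G$ are used.

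Next I would assemble the local matchings into a global structure. Form an auxiliary graph $R$ whose vertices are the half-edges belonging to used edges, with two kinds of edge: a \emph{mate edge} joining $h$ to $\mate(h)$ for each used edge of $G$, and a \emph{transition edge} for each edge of $\itr(\cW,v)$, over all $v \in V(G)$. By the previous step, every vertex of $R$ has degree exactly $1$ in mate edges and degree exactly $1$ in transition edges, so $R$ is a disjoint union of cycles, each alternating between the two kinds of edge. Tracing such a cycle reproduces a closed walk of $\cW$: a mate edge records traversal of an edge of $G$ (a half-edge $g_i$ followed by its mate $h_i$), and the next transition edge records how the walk continues at the vertex just reached ($h_i$ followed by the outgoing half-edge $g_{i+1}$), with the vertices recovered as $\incv(g_i)$ and $\incv(h_i)$. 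Conversely each walk of $\cW$ is exactly one such cycle, so $G$ and the transition graphs determine $\cW$ uniquely (the two directions of traversal of a cycle give a closed walk and its reverse, which is immaterial for an undirected walk).

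This is essentially a bookkeeping argument with no serious obstacle; the one point needing care is the maximum-degree-$1$ claim, which is where the `used at most once' hypothesis is indispensable. Without it a half-edge could appear in several transition edges, the graph $R$ would have higher degree, and the alternating-cycle reconstruction would no longer be forced, so $\cW$ would not in general be recoverable from the transition graphs alone.
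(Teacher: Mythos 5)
Your proof is correct: the paper states this as an Observation with no accompanying proof, and your argument is a careful formalization of precisely the idea the paper treats as immediate --- under the ``each edge at most once'' hypothesis the graphs $\itr(\cW,v)$ are matchings marking the used half-edges, and alternating between mate pairs and transition pairs splices the used edges back into the closed walks, which is exactly your auxiliary-graph cycle decomposition. The one caveat you already flag is the right one: the transition graphs cannot distinguish a closed walk from its reverse, so ``determine $\cW$ uniquely'' must be read up to cyclic shift and reversal of each walk, which is the intended reading and is harmless where the observation is applied (in the proof of Theorem \ref{no-circdecomp}).
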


We can now state the main result of this section.  A \emph{tree of cycles} is a connected graph all of whose blocks are cycles.

\begin{theorem}\label{no-circdecomp}
Let $\cC$ be a decomposition of a nontrivial eulerian graph $G$ into circuits.
\begin{enumerate}[(a), nosep]
\item If $G$ is a tree of cycles and $\cC$ is the collection of cycles in $G$ then every embedding of $G$ relative to $\cC$ is planar and has exactly one outer face, and at least one such embedding exists.

\item Otherwise, $\cC$ is both orientably and nonorientably realizable.  There is at least one embedding of $G$ relative to $\cC$ that is nonorientable and has exactly one outer face.
\end{enumerate}
For every embedding of $G$ relative to $\cC$ with exactly one outer face, the outer face is bounded by an euler circuit, and the embedding has maximum Euler genus (and so maximum orientable or nonorientable genus, as appropriate) over all embeddings of $G$ relative to $\cC$.
\end{theorem}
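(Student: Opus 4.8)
The plan is to decide the orientable and nonorientable realizability of $\cC$ using the machinery of Theorems \ref{realizable} and \ref{ss-no-range} and Proposition \ref{our-ori2nonori}, and to control the number of outer faces by a single counting argument. Since $\cC$ is a circuit decomposition, each half-edge is used exactly once by $\cC$, so $\itr(\cC,v)$ is a perfect matching on $E^*(v)$ at every vertex $v$; in particular it is a union of disjoint paths, so by Theorem \ref{realizable} the collection $\cC$ is always realizable, and it is always orientably realizable because the condition on twice-used edges is vacuous. Thus at least one relative embedding always exists, and the whole question reduces to deciding nonorientable realizability, which by Proposition \ref{our-ori2nonori} depends only on the local matchings $\itr(\cC,v)$.

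The heart of the proof, and the step I expect to be the main obstacle, is the dichotomy that $\cC$ fails to be nonorientably realizable if and only if we are in case (a). The key observation is that the components of the perfect matching $\itr(\cC,v)$ are its individual edges, so two half-edges lie in the same component precisely when they form a transition of some circuit of $\cC$. Hence, by Proposition \ref{our-ori2nonori}, nonorientable realizability fails exactly when, at every vertex $v$, any two half-edges belonging to a common block of $G$ are matched by $\itr(\cC,v)$. To analyze this I would fix a block $B$ and a vertex $v \in B$: if $v$ had three or more half-edges in $B$ these would have to be pairwise matched, impossible in a matching, so $\deg_B(v) \le 2$. Since an eulerian graph has no bridges, every block is $2$-connected and so every $\deg_B(v) \ge 2$; therefore $\deg_B(v) = 2$ for all such $v$, each block is $2$-regular and connected, i.e.\ a cycle, and $G$ is a tree of cycles. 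Finally the matching forces the two half-edges of each block-cycle at $v$ to be paired, so any circuit of $\cC$ must stay within one block and traverse it entirely; thus $\cC$ is exactly the collection of cycles. The converse direction is the same observation read backwards.

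With the dichotomy in hand the two parts follow quickly. In case (a), $\cC$ is not nonorientably realizable, so every relative embedding is orientable; writing $c = |\cC|$ one has $|E| - |V| = c - 1$, and Euler's formula gives exactly $1 - 2g$ outer faces for an orientable embedding of genus $g$. Since (as noted below) there is always at least one outer face, this forces $g = 0$ and exactly one outer face, so every relative embedding is planar with a single outer face, one of which exists by realizability. In case (b), $\cC$ is nonorientably realizable, so some nonorientable relative embedding exists, say with $t$ outer faces. Because no edge of $G$ is used twice by $\cC$, deleting the twice-used edges removes nothing and leaves the connected graph $G$, so the quantity $c$ of Theorem \ref{ss-no-range} equals $1$; if $t \ge 2$ that theorem yields a nonorientable relative embedding with exactly one outer face, and if $t = 1$ we already have one.

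It remains to justify the final paragraph of the statement, uniformly in both cases. For any relative embedding, cyclic compatibility (Lemma \ref{standardemb}) makes $\itr(\cC \cup \cW',v)$ a single cycle on $E^*(v)$; since $\itr(\cC,v)$ is a perfect matching, its complementary $1$-regular subgraph $\itr(\cW',v)$ is also a perfect matching, so the outer faces $\cW'$ use each edge of $G$ exactly once. In particular a matching is never a single cycle, so $\cC$ alone is never cyclically compatible and every relative embedding has at least one outer face, justifying the claim used above. When there is exactly one outer face, that single closed walk uses every edge exactly once and so is an euler circuit. Finally, by Euler's formula the Euler genus of a relative embedding equals $2 - |V| + |E| - |\cC| - |\cW'|$, in which only $|\cW'|$ varies; since $|\cW'| \ge 1$ always, a one-outer-face embedding minimizes the face count and hence maximizes the Euler genus over all embeddings relative to $\cC$, and therefore maximizes the orientable or nonorientable genus as appropriate.
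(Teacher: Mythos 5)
Your proof is correct and follows essentially the same route as the paper's: orientable realizability from the perfect-matching structure of $\itr(\cC,v)$ via Theorem \ref{realizable}, the block analysis with Proposition \ref{our-ori2nonori} to show failure of nonorientable realizability forces a tree of cycles decomposed into its cycles, Theorem \ref{ss-no-range} to reduce to one outer face, and Euler's formula for case (a) and the maximality claim. The only differences are organizational (you establish the dichotomy up front and deduce $\cC=\cZ$ directly rather than via Observation \ref{uniquewalks}, and you explicitly justify that at least one outer face must exist), which if anything makes the argument slightly more self-contained.
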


\begin{proof}
Suppose $G$ is a nontrivial eulerian graph with a circuit decomposition $\cC$.  Since $G$ is nontrivial, every vertex has degree at least $2$.  Since $\cC$ uses every edge exactly once, $\itr(\cC, v)$ is a $1$-regular graph (perfect matching) on vertex set $E^*(v)$ for each $v \in V(G)$, and $\cC$ is orientably realizable by Theorem \ref{realizable}.

Suppose $G$ is a tree of cycles with $n$ vertices and $m$ edges, and $\cC$ is the collection of cycles of $G$, with $|\cC| = k$.
A simple proof by induction on $k$ shows that $m = n + k - 1$.
Suppose we have an embedding of $G$ relative to $\cC$ with $\ell$ outer faces. The Euler genus is $\gamma = 2-n+m-(k+\ell) = 1-\ell$.  Since $\gamma \ge 0$ and $\ell \ge 1$, we must have $\gamma=0$ and $\ell = 1$, i.e. the embedding is planar and there is exactly one outer face.

Now suppose $\cC$ is nonorientably realizable.
No edges are used twice by $\cC$, so if we apply Theorem \ref{ss-no-range}, we do not delete any edges and we obtain a single nontrivial component, $G$ itself. Therefore, there is a nonorientable relative embedding with exactly one outer face.

We now show that the previous two paragraphs cover all possibilities. Suppose $\cC$ is not nonorientably realizable.
Then, by Proposition \ref{our-ori2nonori}, at every vertex $v$ all half-edges incident with $v$ and in the same block of $G$ must be in the same component of $\itr(\cC, v)$.  Since $\itr(\cC, v)$ is a perfect matching, and each block containing $v$ has at least two half-edges incident with $v$, this is only possible if there are exactly two half-edges incident with $v$ from each such block, and some element of $\cC$ uses those two half-edges consecutively.  Therefore each block is $2$-regular, so each block is a cycle, and $G$ is a tree of cycles.
Moreover, if $\cZ$ is the collection of cycles of $G$, then $\itr(\cC, v) = \itr(\cZ, v)$ for all $v \in V(G)$, and so by Observation \ref{uniquewalks}, $\cC = \cZ$.  Thus, $G$ is a tree of cycles and $\cC$ is its collection of cycles, as required.

If an embedding of $G$ relative to $\cC$ has exactly one outer face, the facial walk must cover every edge exactly once, and so is an euler circuit.  The embedding has the minimum possible number of outer faces, and minimizing the number of faces maximizes the Euler genus.
\end{proof}

By taking $\cC = \{T\}$ where $T$ is an euler circuit, we obtain the following strengthening of Edmonds' result \cite[p.~123]{Edm65} that every eulerian graph has a bi-eulerian embedding.  It allows us to specify that one of the faces is bounded by $T$, and guarantees a nonorientable embedding except when the graph is a cycle.

\begin{corollary}\label{no-bieul}
Suppose $G$ is a nontrivial $n$-vertex $m$-edge eulerian graph and $T$ is an euler circuit of $G$.  Then $G$ has a bi-eulerian embedding in which one of the faces is bounded by $T$, and there is a nonorientable embedding of this kind unless $G$ is a cycle.
These bi-eulerian embeddings have maximum Euler genus (namely $m-n$), and so maximum orientable or nonorientable genus, as appropriate, over all embeddings of $G$ with a face bounded by $T$.
\end{corollary}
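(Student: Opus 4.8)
The plan is to derive Corollary \ref{no-bieul} directly from Theorem \ref{no-circdecomp} by taking the circuit decomposition to be $\cC = \{T\}$, the decomposition of $G$ into the single circuit $T$. Since an euler circuit uses every edge exactly once, $\{T\}$ is indeed a circuit decomposition of the nontrivial eulerian graph $G$, so the theorem applies. The first thing I would check is which case occurs. The exceptional case (a) requires $G$ to be a tree of cycles whose collection of cycles is exactly $\cC$; since $\cC$ here has only one element, this forces the tree of cycles to have a single block, i.e., $G$ must itself be a cycle. Conversely, if $G$ is a cycle then $T$ traverses $G$ once and $\{T\}$ is exactly its collection of cycles. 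Thus case (a) holds precisely when $G$ is a cycle, and case (b) holds otherwise.

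First I would treat the case that $G$ is a cycle. By part (a) of Theorem \ref{no-circdecomp}, every embedding of $G$ relative to $\{T\}$ is planar with exactly one outer face, and at least one such embedding exists. The inner face is bounded by $T$, and by the final clause of the theorem the single outer face is bounded by an euler circuit, so this is a bi-eulerian embedding with $T$ bounding a face. Being planar, it is orientable, and no nonorientable embedding of this kind exists; this accounts for the stated exception.

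Next I would treat the case that $G$ is not a cycle. By part (b) of Theorem \ref{no-circdecomp}, $\cC = \{T\}$ is both orientably and nonorientably realizable, and there is a nonorientable embedding of $G$ relative to $\{T\}$ with exactly one outer face. Again the inner face is bounded by $T$ and the outer face is an euler circuit, so this gives a nonorientable bi-eulerian embedding with $T$ bounding a face, as required.

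Finally, for the genus statement I would invoke the last paragraph of Theorem \ref{no-circdecomp}: any embedding of $G$ relative to $\{T\}$ with exactly one outer face maximizes Euler genus over all embeddings of $G$ with a face bounded by $T$. Such an embedding has exactly two faces, so Euler's formula $n - m + |F| = 2 - \gamma$ with $|F| = 2$ gives $\gamma = m - n$, the claimed value. The only real subtlety, and the step I would be most careful about, is the identification of case (a) with the cycle case together with the observation that case (a) admits only planar (hence orientable) relative embeddings; the rest is a direct specialization of Theorem \ref{no-circdecomp} to a single-circuit decomposition.
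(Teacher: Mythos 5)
Your proposal is correct and is essentially the paper's own argument: the paper obtains Corollary \ref{no-bieul} precisely by specializing Theorem \ref{no-circdecomp} to $\cC=\{T\}$, and your identification of case (a) with the case that $G$ is a cycle (hence only planar, orientable relative embeddings) together with the Euler-formula computation $\gamma = m-n$ fills in exactly the details the paper leaves implicit.
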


\section{Maximum genus and interpolation for nonorientable directed embeddings}
\label{sec:no-diremb}

To maximize the genus of an embedding of a given graph or digraph, we try to minimize the number of faces.  By Lemma \ref{diremb-ori-2fc}, an orientable directed embedding of a nontrivial eulerian digraph must have at least two faces.  However, for a nonorientable directed embedding it is possible that there is only one face, and in this short section we show that in fact this can always be achieved.  The argument is essentially the same as for undirected graphs, with one simple additional observation.

For undirected graphs Edmonds \cite[p.~123]{Edm65} showed that every connected graph $G$ has an embedding with a single face.
 % This was also shown by Pisanski \cite{Pis78} (see \cite[Theorem 3.1]{FPR14}).
Ringel \cite[Theorems 11 and 13]{Rin77} showed that the $1$-face embedding can be chosen to be nonorientable unless $G$ is a tree. Xuong \cite[note added in proof]{Xuo79b} stated something similar.
Stahl \cite[Theorem 8]{Sta78} showed that if a connected graph has an embedding with $t$ faces, then it has a nonorientable embedding with $s$ faces for all $s$, $1 \le s < t$.
\fijavz., Pisanski, and Rus showed that eulerian graphs have particularly nice $1$-face embeddings, where the face boundary forms a `strong parallel trace', as follows.

\begin{theorem}[\fijavz., Pisanski, and Rus {\cite[Theorem 5.3]{FPR14}}]
\label{fpr-oneface-s}
A graph $G$ has a $1$-face embedding where the face traverses each edge twice in the same direction if and only if $G$ is eulerian.  (If $G$ is nontrivial the embedding must be nonorientable.)
\end{theorem}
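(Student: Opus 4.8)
The plan is to recast the statement in the language of directed embeddings. Given a $1$-face embedding whose single facial walk traverses each edge twice in the same direction, orient each edge in that common direction to obtain an orientation $D$ of $G$; the facial walk then becomes a single closed \emph{directed} walk, so we have a directed embedding of $D$ with one face. Conversely, a $1$-face directed embedding of any orientation $D$ of $G$ is exactly such an embedding of $G$: both sides of every arc lie on the unique face, and a directed embedding traverses each arc forward, so the face uses each edge twice in its arc direction. Thus the theorem is equivalent to the assertion that an orientation $D$ of $G$ admits a $1$-face directed embedding if and only if $G$ is eulerian, together with the nonorientability claim for nontrivial $G$.

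Necessity of eulerianness is then immediate: if such an embedding exists, then as above $G$ carries a directed embedding, so by Observation~\ref{diremb-basic-s}\ref{db-eul-s} its underlying graph is eulerian. For the parenthetical remark, the embedding has a single facial walk $W$ using each edge twice in the same direction. If $G$ is nontrivial, then reversing $W$ reverses both traversals of every edge simultaneously, so no orientation of $W$ uses each edge once in each direction; by Lemma~\ref{standardemb}\ref{standardemb2} the embedding is nonorientable. (Equivalently, a $1$-face directed embedding cannot be orientable, since by Lemma~\ref{diremb-ori-2fc} an orientable directed embedding is $2$-face-colorable and hence has at least two faces.)

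For existence, suppose $G$ is eulerian and nontrivial, and orient its edges along an euler circuit to obtain an eulerian digraph $D$ (the edgeless case is trivial). I would start from the standard orientable directed embedding $\Phi_0$ of $D$ given by Observation~\ref{diremb-basic-s}\ref{db-ori-s}: all signatures positive, with the half-arcs alternating in direction around each vertex. By Lemma~\ref{diremb-ori-2fc}, $\Phi_0$ is $2$-face-colorable and so has at least two faces. I would then reduce the number of faces to one by repeatedly flipping a single edge signature, as in the classical undirected $1$-face constructions of Ringel and Stahl. The key point---the `simple additional observation'---is that flipping an edge signature leaves the rotation at every vertex unchanged, so the alternating condition of Observation~\ref{diremb-basic-s}\ref{db-alt-s} persists and the embedding stays directed at every stage. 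While at least two faces remain, connectivity of $G$ supplies an edge whose two sides lie on distinct faces; flipping that edge's signature merges those two faces into one, lowering the face count by exactly one. After finitely many flips we reach a directed embedding of $D$ with a single face, giving the desired embedding of $G$, which is nonorientable by the previous paragraph.

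The main obstacle is justifying this reduction step: that a rotation-preserving signature flip both keeps the embedding directed and decreases the face count by one. Directedness follows from the observation above together with a short inductive check that alternation forces every facial walk to traverse arcs forward---arriving at a vertex along an incoming half-arc, one departs along a rotation-neighbor, which alternation forces to be an outgoing half-arc, independently of the signatures encountered. The face-count decrease is the standard fact that twisting an edge bordering two distinct faces amalgamates them into one. Beyond these two points, everything reduces to results already established (Observation~\ref{diremb-basic-s} and Lemmas~\ref{diremb-ori-2fc} and~\ref{standardemb}).
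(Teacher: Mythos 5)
Your proposal is correct and follows essentially the same route as the paper: the paper proves Theorem~\ref{dir-oneface-s} by starting from the orientable directed embedding of Observation~\ref{diremb-basic-s}\ref{db-ori-s} and repeatedly twisting an arc bordering two distinct faces (Observation~\ref{diremb-twistarc-2face}, which is exactly your rotation-preserving signature flip), then obtains Theorem~\ref{fpr-oneface-s} by applying this to an eulerian orientation of $G$, just as you do. Your necessity and nonorientability arguments also match the paper's (via Observation~\ref{diremb-basic-s}\ref{db-eul-s} and the fact that a single face traversing each edge twice in the same direction cannot satisfy Lemma~\ref{standardemb}\ref{standardemb2}).
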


If we have a graph embedding represented by a rotation system with edge signatures, then \emph{twisting the edge $e$} means changing the signature of $e$.
Similarly, we can twist an arc in an embedding of a digraph.
From Observation \ref{diremb-basic-s}(a) and a result of Stahl \cite[Lemma 6(a)]{Sta78}, we obtain the following.

\begin{observation}\label{diremb-twistarc-2face}
(a) Twisting an arc in a directed embedding does not affect the alternating property of half-arcs around a vertex, so the result is still a directed embedding.

(b) If $a$ is an arc in a directed embedding that belongs to the boundaries of two distinct faces $X$ and $Y$, then twisting $a$ gives a new nonorientable directed embedding in which $X$ and $Y$ are combined into a single face and all other faces are unchanged.
\end{observation}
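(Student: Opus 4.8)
The plan is to prove the two parts in order, with part (a) doing the real work needed for part (b).

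For part (a), I would observe that twisting is a signature-only operation: by definition twisting the arc $a$ flips the signature of $a$ and changes nothing else, so in particular the rotation --- the cyclic sequence of half-arcs --- at every vertex is left exactly as it was. Observation \ref{diremb-basic-s}(a) characterizes directed embeddings purely through these rotations, namely that at each vertex the half-arcs, read in rotational order, must alternate between entering and leaving. Since this condition refers only to the cyclic order of the half-arcs and not to any edge signature, and since twisting does not disturb that cyclic order at any vertex, the alternating property continues to hold everywhere. Hence, by Observation \ref{diremb-basic-s}(a), the twisted embedding is again a directed embedding.

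For part (b), I would transfer a known undirected fact and then appeal to part (a) for directedness. Regarding the given directed embedding $\Phi$ as an ordinary signed-rotation embedding of the underlying graph, twisting the edge underlying $a$ is exactly the situation of Stahl's Lemma 6(a) \cite[Lemma 6(a)]{Sta78}: when an edge lies on the boundaries of two distinct faces $X$ and $Y$, twisting it yields a nonorientable embedding in which $X$ and $Y$ are merged into a single face and all remaining faces are unchanged. These are all assertions about the underlying undirected embedding, so they carry over unchanged. By part (a) the twisted embedding $\Phi'$ is still a directed embedding; consequently its merged face is automatically bounded by a directed walk, and no separate verification of directedness of that face is needed. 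Putting these together gives a nonorientable directed embedding with precisely the stated faces.

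Since the substantive facts are all supplied by Stahl's lemma, I do not anticipate a genuine obstacle here; the only point that warrants care is cleanly separating the properties of the underlying undirected embedding (the merging of $X$ and $Y$, the invariance of the other faces, and nonorientability, all from Stahl) from the property that needs the digraph structure (directedness, from part (a)). As a self-contained alternative to quoting Stahl for the nonorientability, one can argue directly from Lemma \ref{diremb-ori-2fc}: after the twist the single merged face borders the arc $a$ on both of its sides, so if $\Phi'$ were orientable then orienting the surface would force that one face to be simultaneously a proface and an antiface, which is impossible.
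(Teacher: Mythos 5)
Your proposal is correct and takes essentially the same route as the paper, which obtains the observation precisely by combining Observation \ref{diremb-basic-s}(a) (twisting changes only the signature, not the rotations, so the alternating in/out property and hence directedness persists) with Stahl's Lemma 6(a) \cite{Sta78} (merging of $X$ and $Y$, invariance of the other faces, and nonorientability of the result). Your added self-contained argument for nonorientability via Lemma \ref{diremb-ori-2fc} is a valid small bonus, but it goes beyond what the paper uses.
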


The following theorem covers existence, genus interpolation, and maximum genus for nonorientable directed embeddings.

\begin{theorem}\label{dir-oneface-s}
Suppose $D$ is an $n$-vertex $m$-edge eulerian digraph.  If $D$ has a directed embedding with $t$ faces, then $D$ has a nonorientable directed embedding with $s$ faces for all $s$ with $1 \le s < t$.   Therefore, the genera for which $D$ has a nonorientable directed embedding form a (possibly empty) interval.

If $D$ is trivial then $D$ has exactly one directed embedding, which is planar with exactly one face.
If $D$ is nontrivial then $D$ has at least one $1$-face directed embedding, every such embedding is nonorientable, and every such embedding has maximum genus (namely $m-n+1$) over all nonorientable directed embeddings of $D$, and also maximum Euler genus over all directed embeddings of $D$.
\end{theorem}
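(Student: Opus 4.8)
The plan is to establish the interpolation statement first and then derive the trivial case, the existence of a $1$-face embedding, nonorientability, and the maximum genus claim from it together with Euler's formula. For interpolation, I would start from the given $t$-face directed embedding and repeatedly merge faces by twisting arcs, exactly as in Stahl's undirected argument but checking at each stage that directedness survives. The key point is that whenever a connected cellular embedding has at least two faces, its dual graph is connected with at least two vertices, so some arc $a$ lies on the boundaries of two distinct faces $X$ and $Y$. Twisting $a$ keeps the embedding directed by Observation \ref{diremb-twistarc-2face}(a), and by Observation \ref{diremb-twistarc-2face}(b) it merges $X$ and $Y$ into a single face, leaves all other faces unchanged, decreases the face count by exactly one, and produces a nonorientable embedding. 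Iterating $t-s$ such twists yields, for each $s$ with $1 \le s < t$, a directed embedding with $s$ faces, which is nonorientable because at least one twist was performed. Since an eulerian digraph is connected and twisting preserves connectivity, an arc bordering two distinct faces is always available until only one face remains.

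The interval claim is then immediate: by Euler's formula a nonorientable directed embedding with face count $f$ has Euler genus (equivalently nonorientable genus) $2-n+m-f$, so as $f$ ranges over the integer interval of achievable face counts, the genus ranges over an integer interval. The trivial case is direct, since a trivial eulerian digraph is a single vertex with no arcs, whose only embedding is the planar one with a single face.

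For the nontrivial case I would obtain a $1$-face directed embedding not from Theorem \ref{fpr-oneface-s}, which produces a $1$-face embedding of the underlying graph $\ug{D}$ whose strong parallel trace need not respect arc directions, but from within the directed framework: Observation \ref{diremb-basic-s}\ref{db-ori-s} supplies an orientable directed embedding, which by Lemma \ref{diremb-ori-2fc} is $2$-face-colorable and hence, as $D$ is nontrivial, has at least two faces; applying the interpolation argument with $s=1$ then yields a $1$-face directed embedding. Any $1$-face directed embedding of a nontrivial digraph must be nonorientable, because a single face cannot be $2$-face-colored so that the two sides of each arc receive different colors, so by Lemma \ref{diremb-ori-2fc} it is not orientable. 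Finally, Euler's formula gives Euler genus $m-n+1$ for any $1$-face embedding, and since every directed embedding has at least one face, $m-n+1$ is the maximum Euler genus over all directed embeddings; as the $1$-face embeddings are nonorientable and attain this global maximum, $m-n+1$ is also the maximum nonorientable genus.

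The step requiring the most care is the transition from undirected intuition to the directed setting: recognizing that Theorem \ref{fpr-oneface-s} does not by itself deliver a \emph{directed} $1$-face embedding, and that the correct route is to seed the process with the orientable directed embedding of Observation \ref{diremb-basic-s}\ref{db-ori-s} and use that twisting preserves the alternating (directed) property. Once that is in place, the remaining ingredient—always finding an arc on two distinct faces while more than one face survives—is the standard connectivity of the dual of a connected cellular embedding, and everything else is bookkeeping with Euler's formula.
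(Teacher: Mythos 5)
Your proposal is correct and follows essentially the same route as the paper's proof: iterated arc-twisting via Observation~\ref{diremb-twistarc-2face} starting from the orientable directed embedding of Observation~\ref{diremb-basic-s}\ref{db-ori-s}, followed by Euler's formula for the interval and maximum-genus claims. Your minor variations (justifying the existence of an arc on two distinct faces via dual connectivity, and deducing nonorientability of a $1$-face embedding from Lemma~\ref{diremb-ori-2fc} rather than from the fact that the single facial walk traverses every arc twice in the same direction) only make explicit steps the paper leaves implicit.
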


\begin{proof} 
Consider a directed embedding of $D$ with $t$ faces.  If there is more than one face, then we can twist an arc separating two distinct faces to combine them as in Observation \ref{diremb-twistarc-2face}(b), obtaining a nonorientable directed embedding.
Repeating until we obtain a directed embedding with only one face shows that there are nonorientable directed embeddings with $s$ faces for $1 \le s < t$.

Let $t\sbmax$ be the maximum number of faces in any directed embedding of $D$.  By the previous paragraph, the set of $s$ for which $D$ has a nonorientable directed embedding with $s$ faces is one of the (integer, possibly empty) intervals $[1, t\sbmax-1]$ or $[1,t\sbmax]$.  Hence, by Euler's formula, the genera also form an interval.

From Observation \ref{diremb-basic-s}\ref{db-ori-s} and the first paragraph of this proof, there is always a $1$-face directed embedding of $D$.
If $D$ is trivial, then its only embedding is planar.
If $D$ is nontrivial, then the single facial walk traverses every arc twice in the same direction, and hence every such embedding is nonorientable.

Thus, the maximum nonorientable genus and maximum Euler genus occur when there is one face, and the genus follows from Euler's formula.
\end{proof}

Theorem \ref{dir-oneface-s} may be regarded as a directed embedding version of Ringel's result (existence of a nonorientable $1$-face embedding) and Stahl's result (interpolation for nonorientable embeddings) mentioned at the start of this section; our proof is similar to Stahl's.  Moreover, applying Theorem \ref{dir-oneface-s} to an arbitrary eulerian orientation of an eulerian graph yields \fijavz., Pisanksi, and Rus's result, Theorem \ref{fpr-oneface-s}, as a corollary.

As Theorem \ref{dir-oneface-s} indicates, there are no orientable $1$-face directed embeddings of a nontrivial eulerian digraph.  Thus, all maximum Euler genus directed embeddings of a nontrivial eulerian digraph are nonorientable.

\section{Conclusion and Future Directions}\label{sec:conc}

In this paper we have essentially resolved the questions of maximum genus directed embeddings and of bi-eulerian embeddings of undirected graphs (or more generally, embeddings where a circuit decomposition can be extended to an embedding using an euler circuit face) in the nonorientable setting.  In the orientable setting we have given some simple necessary conditions, proved existence of bi-eulerian embeddings (but not extensions of circuit decompositions to embeddings using an euler circuit face) when all vertices have degree $2$ mod $4$, and discussed situations where we do and do not have a bi-eulerian embedding in the presence of vertices of degree $0$ mod $4$.  But there is clearly much more to do in the orientable case.

While our results here depend on vertex degrees modulo $4$, the proof of Theorem \ref{tournament} on tournaments, due to Bonnington et al.~\cite{BCMM02} does not.  This inspired us to consider dense graphs, having high minimum degree, with no conditions on vertex degrees modulo $4$.  We can show that given an $n$-vertex eulerian digraph where the minimum degree of the underlying simple undirected graph is at least $(4n+2)/5$, there is an orientable directed embedding with faces bounded by a specified euler circuit (or even a specified circuit decomposition) and at most two other circuits.  An overview of the proof appears in \cite{EE-Mdense}; a full paper is in preparation.

We have no reason to believe that $(4n+2)/5$ represents a sharp bound; it is just the point at which our current techniques stop working.  Therefore, it is natural to ask what minimum degree can guarantee an orientable bi-eulerian embedding for an admissible graph or digraph (i.e., it is eulerian, has an even number of vertices of degree $0$ mod $4$ and no bad $2$-edge-cuts).
All the examples in this paper that are admissible but have no orientable bi-eulerian embedding or directed embedding have vertices of degree $4$.  So perhaps a minimum degree of at least $6$ is sufficient.  In fact, our examples also all have multiple edges (digons), so it is even possible that all simple admissible graphs with minimum degree at least $4$ have a bi-eulerian embedding.

We can also ask what minimum degree is required to extend a specified euler circuit, or specified circuit decomposition, to an embedding by adding at most two other faces.  As the examples from Section \ref{sec:circdec} show, a minimum degree of at least $6$ is not sufficient to extend a general circuit decomposition.

As noted earlier, orientable bi-eulerian embeddings are closely connected to maximum genus directed embeddings of eulerian digraphs.
For general undirected graphs there are a number of useful results on maximum orientable genus, such as the formulas of Xuong \cite{Xuo79b} and \nebesky. \cite{Neb81}, and the polynomial-time algorithm of Furst, Gross, and McGeoch \cite{FGM88}.  We would like to obtain similar formulas for maximum orientable directed genus, and determine whether algorithmic problems involving orientable bi-eulerian embeddings and maximum orientable directed genus can be solved in polynomial time, or are NP-hard.
So far the only progress in this direction is the result mentioned in Section \ref{sec:intro}, where a result of Geelen, Iwata, and Murota \cite{GIM03} provides a polynomial-time algorithm to determine whether a $4$-regular eulerian digraph has a bi-eulerian directed embedding.

\section*{Acknowledgment}

We thank Karen Collins for pointing out the configuration $F_{2,2}$ inside our example in Figure \ref{CounterEx}(b), which led us to the infinite family in Proposition \ref{forbconf}.
We also thank Bill Jackson for useful discussions regarding \cite{ABJ96, BJ00, GIM03, GI05}.

\begin{appendices}

\section{Appendix: Forbidden configurations for orientable bi-eulerian embeddings}\label{app:forbconf}

In this appendix we prove Proposition \ref{forbconf}, which says that certain configurations in an eulerian graph $G$ prevent $G$ from having an orientable bi-eulerian embedding.  Proposition \ref{forbconf} follows from Theorem \ref{forbconf-main} below.

Henceforth we suppose that $G$ is an eulerian graph with an orientable bi-eulerian embedding, with faces bounded by two euler circuits $T_1, T_2$.  We will examine the way in which $T_1$ and $T_2$ travel through subgraphs we call `chains of digons', and use this to show that certain configurations $F_{s,t}$ cannot occur in $G$.

Since the embedding is orientable, we can orient $T_1$ and $T_2$ so that each edge is used once in each direction.  For our arguments it is actually more convenient to reverse the orientation of $T_2$, so we have oriented euler circuits $\ot_1, \ot_2$ which use each edge in the same direction (in other words, $\ot_1$ and $\ot_2$ are faces of a directed embedding of an orientation of $G$).
At each vertex the transitions used by $\ot_1$ and by $\ot_2$ are disjoint.

\subsection*{Transits through subgraphs}

We will consider how $\ot_1$ and $\ot_2$ enter and leave various subgraphs, including individual vertices.  If $H$ is a subgraph of $G$ and $\ot_i$
contains a sequence of half-edges $h_1 h_2 \dots h_k$ where $h_1$ and $h_k$ are not half-edges of $H$ but all half-edges $h_i$ with $2 \le i \le k-1$ belong to $H$, then we say $h_1h_k$ is a \emph{transit of $\ot_i$ through $H$}.  If $H$ is a single vertex $v$ then a transit is just a transition of $\ot_i$ at $v$, directed according to the orientation of $\ot_i$.  The \emph{profile of $\ot_i$ through $H$} is the set of all transits of $\ot_i$ through $H$ and the \emph{joint profile of $H$} is the ordered pair $(\pi_1, \pi_2)$ where $\pi_i$ is the profile of $\ot_i$ through $H$.  The number of transits of $\ot_i$ through $H$ is just $|A|/2$, where $A$ is the set of half-edges of $G$ that are not in $H$ but are incident with vertices of $H$.

Henceforth we use $\ot$ to denote an arbitrary oriented euler circuit.  When discussing our particular oriented euler circuits $\ot_1$ and $\ot_2$ we adopt the convention that $\set{i, j} = \set{1, 2}$, so if $\ot_i$ is one of our oriented euler circuits, $\ot_j$ is the other.  

If we know $\ot_i$ uses particular transits through certain subgraphs we may be able to conclude that some sequence of edges in $\ot_i$ closes up into a cyclic sequence without using all edges of $G$.
We then say that $\ot_i$ has a \emph{subcircuit}, which provides a contradiction because $\ot_i$ is a circuit using all edges.

\subsection*{Transits through vertices}

\begin{argstep}\label{X0}
Consider possible profiles of $\ot$ through $v$, a vertex of degree $4$.
Each of the $4!=24$ permutations of the half-edges $h_1, h_2, h_3, h_4$ incident with $v$ corresponds to a profile by considering the first and last ordered pairs of $h_i$'s as transits.  Thus for example the permuted order $h_2, h_3, h_4, h_1$ becomes the profile $\{h_2 h_3, h_4 h_1 \}$.  However, the profile $\prof(h_2h_3, h_4h_1)$ is the same as $\prof(h_4h_1, h_2h_3)$, so there are only $12$ distinct profiles.

For each profile $\pi_i$ of $\ot_i$ through $v$, the profile $\pi_j$ of $\ot_j$ through $v$ is uniquely determined by the facts that in $\ot_j$ each half-edge must be used in same direction as in $\ot_i$, and that the transitions of $\ot_j$ are disjoint from those of $\ot_i$.
In particular, if $\pi_i = \prof(h_1 h_2, h_3 h_4)$ then $\pi_j = \prof(h_1 h_4, h_3 h_2)$.
\end{argstep}

\subsection*{Transits through chains of digons}

A \emph{chain of digons from $u$ to $v$ of length $\ell$} is a subgraph $C$ consisting of distinct vertices $u=v_0, v_1, v_2, \dots, v_\ell = v$, all of degree $4$ in $G$, and two parallel edges $e_p^1, e_p^2$ joining $v_{p-1}$ and $v_p$ for $1 \le p \le \ell$.
We call $u=v_0$ the \emph{rear vertex} and $v=v_\ell$ the \emph{front vertex}.
For each $p$ with $0 \le p \le \ell$ denote the half-edges incident with $v_p$ by $g_p^1, g_p^2, h_p^1, h_p^2$, such that for $1 \le p \le \ell$ and $\al \in \{1, 2\}$ we have $e_p^\al = \set{h_{p-1}^\al, g_p^\al}$.  See Figure \ref{chainlabel}.  The half-edges $g_0^\al$ and $h_\ell^\al$ for $\al \in \set{1,2}$ are the half-edges incident with $v_0$ and $v_\ell$ that do not belong to $C$.

\begin{figure}[h!]
\centering   
\includegraphics[width=0.45
\textwidth]%
    {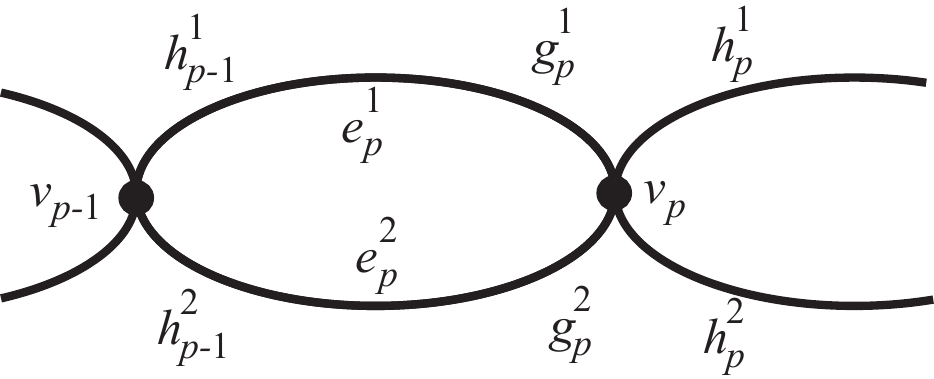}
%trim=left, bottom, right, top
\caption{Labels for a chain of digons.}
\label{chainlabel}
\end{figure}

At each vertex $v_p$ we split the profiles of $\ot$ through $v_p$ into the following cases, using the convention that $\set{\al,\be} = \set{\ga,\de} = \set{1,2}$.

$\ot$ \emph{turns around} at $v_p$ if the profile of $\ot$ through $v_p$ has the form
$\{g_p^\al g_p^\be, h_p^\ga h_p^\de\}$;

$\ot$ \emph{passes itself} at $v_p$ if the profile of $\ot$ through $v_p$ has the form
$\{g_p^\al h_p^\ga, h_p^\de g_p^\be\}$;

$\ot$ \emph{goes forward} at $v_p$ if the profile of $\ot$ through $v_p$ has the form
$\{g_p^1 h_p^\ga, g_p^2 h_p^\de\}$;

$\ot$ \emph{goes backward} at $v_p$ if the profile of $\ot$ through $v_p$ has the form
$\{h_p^\ga g_p^1, h_p^\de g_p^2\}$.

\noindent
The above four situations cover all $12$ possibilities.  See Figure \ref{CounterEx1} for examples.

\begin{figure}[ht!]
     \centering
     \begin{subfigure}[c]{0.75\textwidth}
        \centering
\includegraphics[ width=0.75\textwidth]{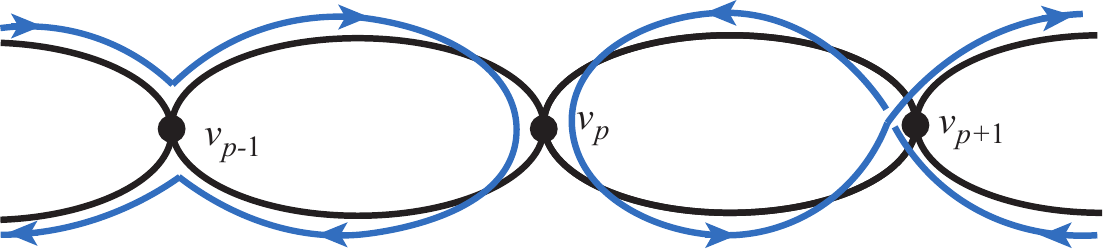}
        \caption{$\ot$ turning around at $v_p$ and passing itself at $v_{p-1}$ and $v_{p+1}$.}
        \label{BiDirTransit}
     \end{subfigure}
     \vspace{6mm}
     
             \begin{subfigure}[c]{0.45\textwidth}
        \centering    \includegraphics[width=0.85\textwidth]{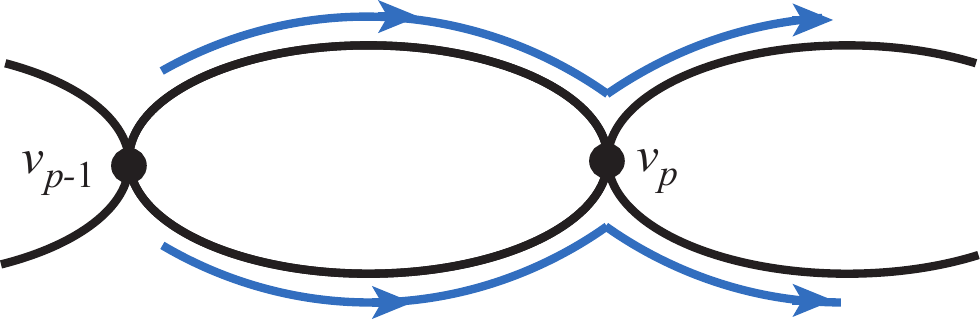}
        \caption{$\ot$ going forward and noncrossing at $v_p$.}
        \label{UniTransit}
     \end{subfigure}
    \hfill
         \begin{subfigure}[c]{0.45\textwidth}
        \centering
    \includegraphics[ width=0.85\textwidth]{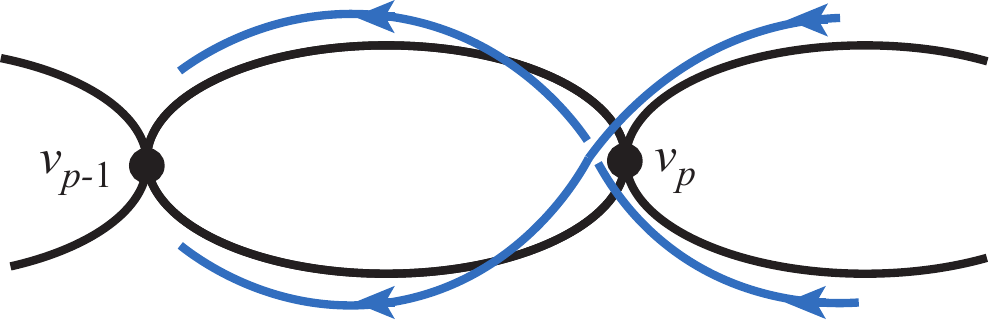}
        \caption{$\ot$ going backward and crossing at $v_p$.}
        \label{UniTransitCross}
     \end{subfigure}
  \caption{How $\protect\ot$ transits vertices in a chain of digons.}
\label{CounterEx1}
\end{figure}

\begin{argstep}\label{X1a}
If $\ot_i$ turns around or passes itself at $v_p$, we say $\ot_i$ is \emph{bidirectional} at $v_p$, while if $\ot_i$ goes forward or backward at $v_p$, we say $\ot_i$ is \emph{unidirectional} at $v_p$.
If $\ot_i$ is bidirectional at $v_p$ then it is bidirectional at $v_{p-1}$ (if it exists) and at $v_{p+1}$ (if it exists) and hence bidirectional at all vertices $v_q$, $0 \le q \le \ell$.  Similarly, if $\ot_i$ goes forward or backward at $v_p$, then it goes forward or backward, respectively, at all $v_q$, $0 \le q \le \ell$.

By (\ref{X0}), if $\ot_i$ turns around or passes itself at $v_p$, then $\ot_j$ passes itself or turns around at $v_p$, respectively.  And if $\ot_i$ goes forward or backward at $v_p$, then $\ot_j$ does the same thing.  It follows that $\ot_i$ and $\ot_j$ behave in the same way to the extent that they are both go forward at all vertices, both go backward at all vertices, or both are bidirectional at all vertices.  See Figure \ref{A2fig}.

\begin{figure}[ht!]
     \centering
     \begin{subfigure}[c]{0.28\textwidth}
        \centering    \includegraphics[width=0.7\textwidth]{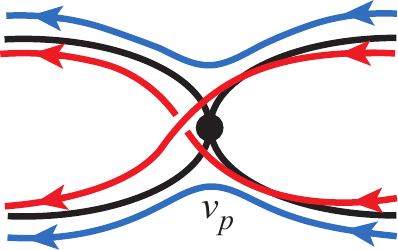}
        \caption{$\ot_i$ and $\ot_j$ both going backwards at $v_p$.}
        \label{A2c}
     \end{subfigure}
     \hfill
     \begin{subfigure}[c]{0.28\textwidth}
        \centering
\includegraphics[ width=0.7\textwidth]{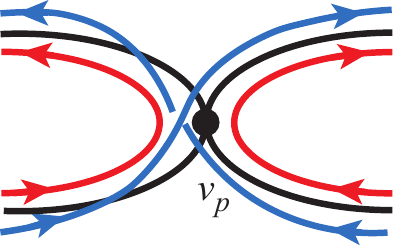}
        \caption{$\ot_i$ turning around and $\ot_j$ passing itself at $v_p$.}
        \label{A2a}
     \end{subfigure}
     \hfill
        \begin{subfigure}[c]{0.28\textwidth}
        \centering    \includegraphics[width=0.7\textwidth]{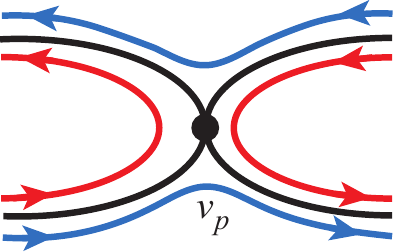}
        \caption{$\ot_i$ turning around and $\ot_j$ passing itself at $v_p$.}
        \label{A2b}
     \end{subfigure}
           \caption{$T_i$ and $T_j$ both go forwards,  backwards, or are bidirectional at $v_p$.}
    \label{A2fig}
    \end{figure}

\end{argstep}

\begin{argstep}\label{X1b}
Suppose both $\ot_1$ and $\ot_2$ either go forward or go backward at all vertices of $C$.  Then we say that $\ot_i$ is \emph{noncrossing} or \emph{crossing} at $v_p$ according to whether $\ga=1$ and $\de=2$ or $\ga=2$ and $\de=1$, respectively, in the definition above.  See Figure \ref{UniTransit} and \subref{UniTransitCross}.
By (\ref{X0}), $\ot_i$ is noncrossing at $v_p$ if and only if $\ot_j$ is crossing at $v_p$.
Thus, at each of the $\ell+1$ vertices $v_0, v_1, \dots, v_\ell$, exactly one of $\ot_i$ and $\ot_j$ is crossing.

If $\ell$ is odd, so that $\ell+1$ is even, the numbers of vertices at which $\ot_1$ and $\ot_2$ are crossing have the same parity.  Thus, the joint profile of $C$ is
$\jprof(g_0^1 h_\ell^\ga, g_0^2 h_\ell^\de;
	g_0^1 h_\ell^\ga, g_0^2 h_\ell^\de)$
where $\set{\ga, \de} = \set{1,2}$.
On the other hand, if $\ell$ is even, then the number of vertices at which $\ot_1$ and $\ot_2$ are crossing have opposite parities, so the joint profile of $C$ is
$\jprof(g_0^1 h_\ell^\ga, g_0^2 h_\ell^\de;
	g_0^1 h_\ell^\de, g_0^2 h_\ell^\ga)$
where $\set{\ga, \de} = \set{1,2}$.  See Figure~\ref{A3fig}.

\begin{figure}[ht!] 
     \centering
     \begin{subfigure}[c]{0.75\textwidth}
        \centering
\includegraphics[ width=0.9\textwidth]{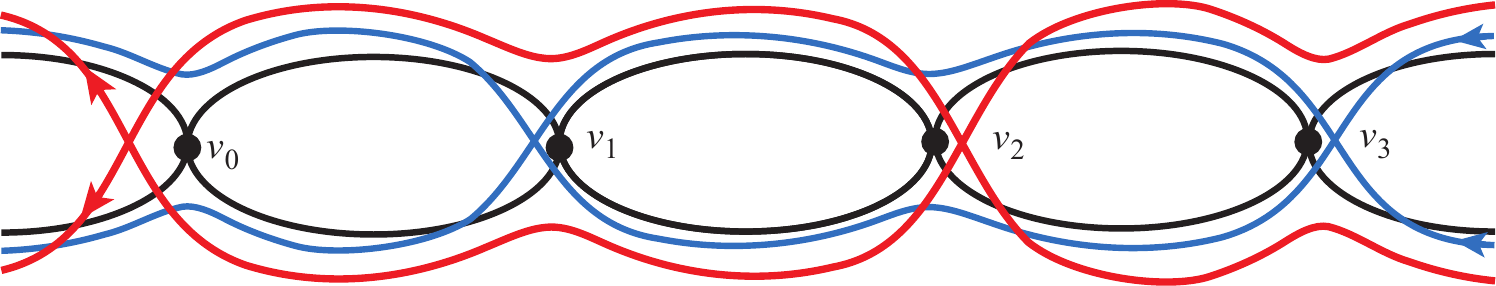}
        \caption{ $\ell$ is odd (here $\ell=3$), and the numbers of vertices at which $\ot_1$ and $\ot_2$ are crossing have the same parity.}
        \label{A3a}
     \end{subfigure}
     \vspace{6mm}
     
        \begin{subfigure}[c]{0.75\textwidth}
        \centering    \includegraphics[width=0.7\textwidth]{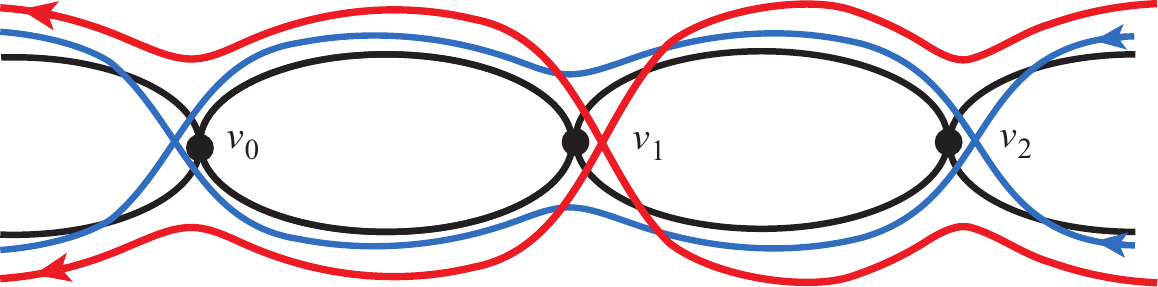}
        \caption{$\ell$ is even (here $\ell=2$), and the numbers of vertices at which $\ot_1$ and $\ot_2$ are crossing have opposite parity.}
        \label{A3b}
     \end{subfigure}
      \caption{Two joint profiles depending on whether $\ell$ is even or odd.}
      \label{A3fig}
           \end{figure}

\end{argstep}

\begin{argstep}\label{X2}
Suppose $\ot_1$ and $\ot_2$ are bidirectional.
Assume there are two vertices $v_p$ and $v_q$ where $\ot_1$ turns around. We may suppose that $p < q$ and that $\ot_1$ does not turn around at $v_r$ for $p < r < q$.  Then $\ot_1$ must travel from $v_p$ from $v_q$, then back to $v_p$, completing a subcircuit, which is impossible.
Therefore $\ot_1$ turns around at most once, and hence, by (\ref{X1a}), $\ot_2$ passes itself at most once.  Swapping the roles of $\ot_1$ and $\ot_2$, we conclude that each of $\ot_1$ and $\ot_2$ turns around once and passes itself once. Thus, $\ell=1$ and at $v_0$ some $\ot_i$ passes itself and $\ot_j$ turns around, while at $v_1$ we have that $\ot_i$ turns around and $\ot_j$ passes itself.   See Figure~\ref{A4fig}.

\begin{figure}[h!]
\centering   

\includegraphics[width=0.35
\textwidth]%
    {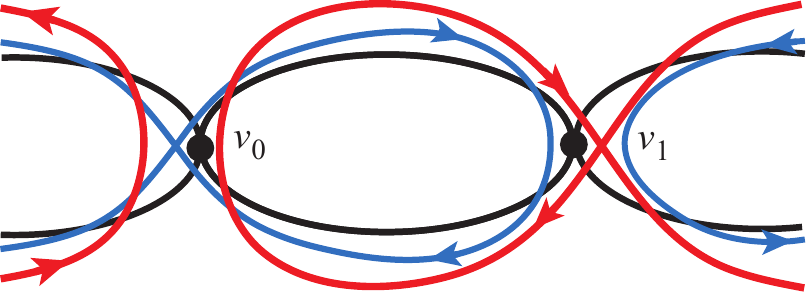}
%trim=left, bottom, right, top
\caption{Turn arounds that force $\ell$ to be 1.}
\label{A4fig}
\end{figure}

The sequences of half-edges used by $\ot_i$ as it passes through $C$ therefore have the form
$g_0^\al h_0^\ka g_1^\ka g_1^\la h_0^\la g_0^\be$
and $h_1^\ga h_1^\de$,
where $\set{\al, \be} = \set{\ga, \de} = \set{\ka, \la} = \set{1,2}$.
Applying (\ref{X0}), the sequences of half-edges used by $\ot_j$ as it passes through $C$ are then
$g_0^\al g_0^\be$ and
$h_1^\ga g_1^\la h_0^\la h_0^\ka g_1^\ka h_1^\de$.
In particular, both $\ot_i$ and $\ot_j$ have the same transit $h_1^\ga h_1^\de$ through $C$.
\end{argstep}

\subsection*{Forbidden configurations}

We now define certain configurations and prove that in many cases they prevent the existence of an orientable bi-eulerian embedding.

The configuration $F_{s,t}$ consists of two disjoint chains of digons $C$ and $\bC$ connected together as follows.
Assume $C$ has length $\ell=s$ and its elements are labelled as described above.  Assume $\bC$ has length $\bl=t$ and its elements are labelled so that $\bv_p, \bg_p^\al, \bh_p^\al$ play the same roles in $\bC$ as $v_p, g_p^\al, h_p^\al$, respectively, play in $C$.
We have a configuration $F_{s,t}$ if there are edges from the front vertex $v_s$ of $C$ to both the front and rear vertices $\bv_0$ and $\bv_t$ of $\bar C$.  In particular, we suppose $G$ has edges
$f_1 = \set{h_s^1, \bg_0^1}$ and
$f_2 = \set{h_s^2, \bh_t^1}$.

\begin{theorem}\label{forbconf-main}
If $G$ is an eulerian graph with an orientable bi-eulerian embedding containing a configuration $F_{s,t}$ with $s, t \ge 1$, then either $t=1$, or $s=1$ and $t$ is odd.

\end{theorem}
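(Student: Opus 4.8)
The plan is to show that the two allowed outcomes in the conclusion, namely $t=1$ and the pair $s=1$ with $t$ odd, are exactly the configurations in which the behaviors of $\ot_1$ and $\ot_2$ on the two chains $C$ (of length $s$) and $\bC$ (of length $t$) can be made mutually consistent, and that every remaining configuration forces either a clash in the edge directions or a subcircuit, contradicting the fact that $\ot_1$ and $\ot_2$ are euler circuits. First I would apply (\ref{X1a}) separately to $C$ and to $\bC$, so that on each chain $\ot_1$ and $\ot_2$ are simultaneously forward, simultaneously backward, or simultaneously bidirectional. By (\ref{X2}), being bidirectional on $C$ forces $s=1$, and being bidirectional on $\bC$ forces $t=1$. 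Consequently, whenever a chain has length at least $2$ it must be unidirectional (forward or backward).

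The heart of the argument is a direction check at the junction. Since $\ot_1$ and $\ot_2$ use every edge in the same direction, the forward/backward state of $C$ fixes the directions of $f_1$ and $f_2$ at the front vertex $v_s$ (reading the transits, both $f_1$ and $f_2$ leave $v_s$ if $C$ is forward, and both enter $v_s$ if $C$ is backward), while the state of $\bC$ fixes the direction of $f_1$ at $\bv_0$ and of $f_2$ at $\bv_t$ through the half-edges $\bg_0^1\in f_1$ and $\bh_t^1\in f_2$. Comparing these, I would run through the four forward/backward combinations and find that each one forces $f_1$ or $f_2$ to be directed both ways at once, a contradiction. Thus two unidirectional chains can never be joined as in $F_{s,t}$; the same check shows that a unidirectional chain is consistent only when paired with a bidirectional (hence length-$1$) chain. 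This immediately disposes of the case $s\ge 2$, $t\ge 2$ (both chains would have to be unidirectional), and it shows that when $s=1$ and $t\ge 2$ the chain $\bC$ is unidirectional while $C$ must be bidirectional.

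It then remains to analyze $s=1$ with $\bC$ unidirectional. Here (\ref{X2}) gives that both $\ot_1$ and $\ot_2$ use the transit $h_1^\gamma h_1^\delta$ through $C$, so via $f_1,v_s,f_2$ both circuits connect $\bg_0^1$ to $\bh_t^1$. On the other side, (\ref{X1b}) describes the joint profile of $\bC$: for $t$ odd the two circuits pair $\bg_0^1$ with the \emph{same} half-edge $\bh_t^{\bar\gamma}$, and for $t$ even with \emph{opposite} ones. If for some circuit $\bC$ also connects $\bg_0^1$ to $\bh_t^1$, then this $\bC$-transit together with the junction connection closes into a subcircuit avoiding the free half-edges $\bg_0^2$ and $\bh_t^2$, which is impossible. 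When $t$ is even the two pairings are opposite, so exactly one of $\ot_1,\ot_2$ connects $\bg_0^1$ to $\bh_t^1$ and produces a subcircuit, ruling out $s=1$, $t$ even; when $t$ is odd the common pairing can be chosen to avoid the $\bg_0^1$–$\bh_t^1$ connection, so no contradiction arises and the case is genuinely allowed. Together with the trivially allowed case $t=1$, this yields the stated dichotomy.

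The main obstacle I expect is bookkeeping the directions correctly: the forward/backward distinction is only meaningful together with the induced orientations of $f_1$ and $f_2$, and the key observation is that joining two unidirectional chains is always direction-inconsistent, which is what disposes of $s,t\ge 2$ so cleanly. The secondary delicate point is the $s=1$, $t$ even subcircuit argument, where one must play the forced junction connection $\bg_0^1$–$\bh_t^1$ against the parity-dependent profile of $\bC$ from (\ref{X1b}) to locate the subcircuit in precisely one of the two circuits.
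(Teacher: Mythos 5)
Your proposal is correct and takes essentially the same approach as the paper: it uses (\ref{X1a}), (\ref{X2}) (bidirectional forces length $1$), and the parity statement (\ref{X1b}) in exactly the same roles, and closes with the identical subcircuit contradiction ruling out $s=1$ with $t$ even. The only difference is organizational — you run a symmetric four-case direction check at the junction to show two unidirectional chains cannot be joined, whereas the paper fixes the direction of $f_1$ without loss of generality and then deduces that $\bC$ goes forward and $C$ must be bidirectional; the underlying content is the same.
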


\begin{proof}
Suppose $G$ contains $F_{s,t}$, where $s, t \ge 1$, and $G$ has an orientable bi-eulerian embedding.  We assume the notation described above for the chains of digons $C, \bC$ in $F_{s,t}$ and for the oriented face boundaries $\ot_1, \ot_2$ of the embedding.
Without loss of generality we may assume that $\ot_1$ and $\ot_2$ use $f_1$ in the direction from $v_s$ to $\bv_0$ (from $h_s^1$ to $\bg_0^1$).

\begin{figure}[h!]
\centering   
\includegraphics[width=0.6\textwidth]
{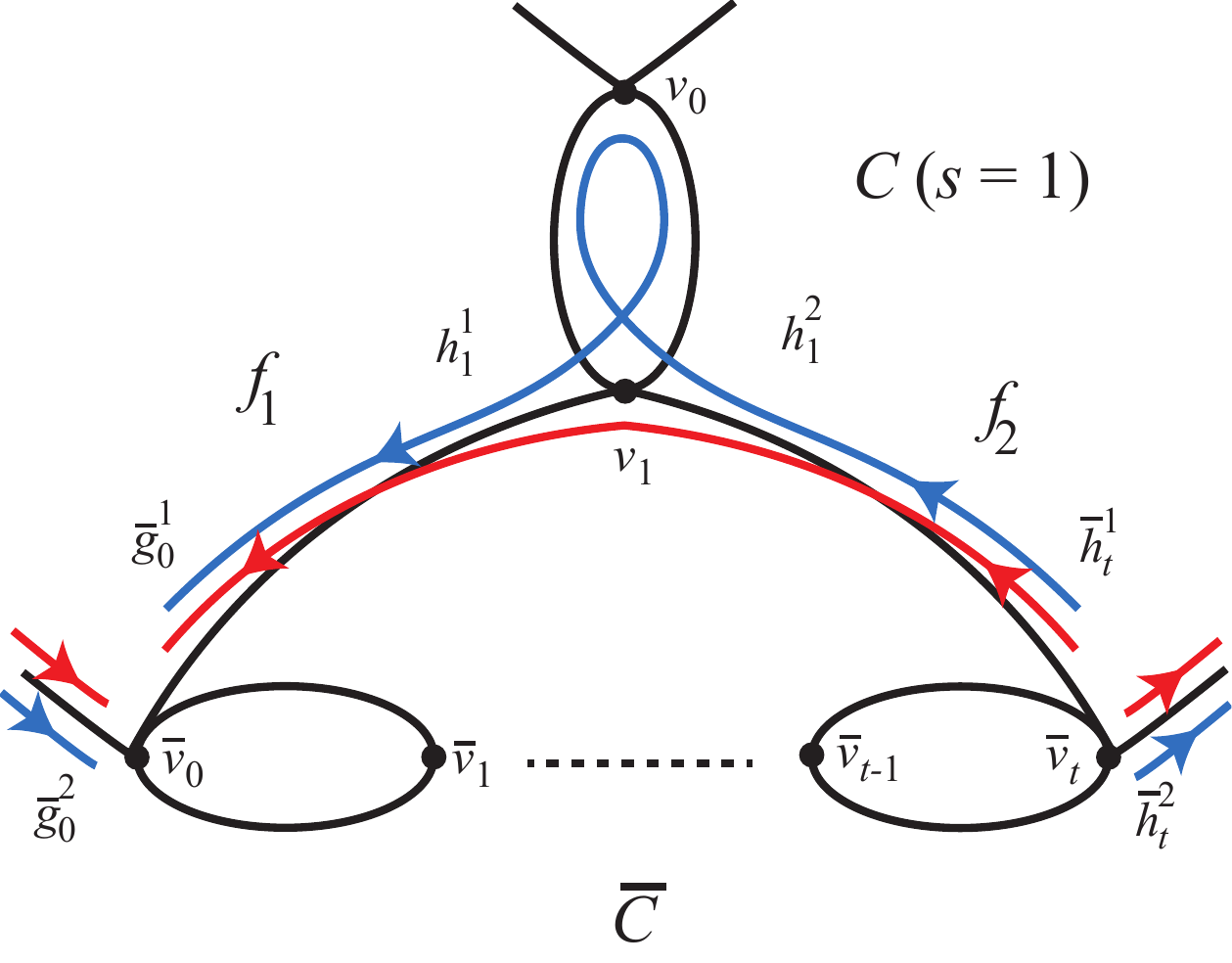}
\caption{{Paths through $F_{1,t}$}.}
\label{ThrmA5}
\end{figure}

If $t=1$ then the conclusion holds, so we may suppose that $t \ge 2$.  Then by (\ref{X2}) $\ot_1, \ot_2$ are not bidirectional with respect to $\bC$.
By (\ref{X1a}) and the fact that $\ot_1, \ot_2$ enter $\bC$ along $\bg_0^1$, $\ot_1$ and $\ot_2$ go forward at all vertices of $\bC$.  Therefore, $f_2$ must be used in the direction from $\bv_t$ to $v_s$ (from $\bh_t^1$ to $h_s^2$).  Thus, $\ot_1, \ot_2$ are bidirectional at $v_s$.
Therefore, by (\ref{X2}) we have $s=1$ and both $\ot_1$ and $\ot_2$ have a transit $h_s^2 h_s^1$ through $C$.  See Figure~\ref{ThrmA5}.

Suppose that $t$ is even.  Then by (\ref{X1b}) the joint profile of $\bC$ is
$\jprof(\bg_0^1 \bh_t^\ga, \bg_0^2 \bh_t^\de;
	\bg_0^1 \bh_t^\de, \bg_0^2 \bh_t^\ga)$
where $\set{\ga, \de} = \set{1,2}$.
If $\ga = 1$ and $\de = 2$ then $\ot_1$ has transits $\bg_0^1 \bh_t^1$ through $\bC$ and $h_s^2 h_s^1$ through $C$ which produce a subcircuit in $\ot_1$.  Similarly, if $\ga=2$ and $\de=1$ then $\ot_2$ has a subcircuit.  In either case we have a contradiction, so $t$ must be odd.

Thus, we have shown that either $t=1$, or $s=1$ and $t$ is odd, proving the result.
\end{proof}

Proposition \ref{forbconf} is just the contrapositive of Theorem \ref{forbconf-main}.

If $t=1$, or $s=1$ and $t$ is odd, then there exist eulerian graphs $G$ containing a configuration $F_{s,t}$ that also have an orientable bi-eulerian embedding.  We leave the verification of this to the reader: examples for $F_{2,1}, F_{3,1}$ and $F_{1,3}$ are not hard to construct and can easily be generalized to the other relevant $F_{s,t}$.

\end{appendices}

\printbibliography

\end{document}